\documentclass[a4paper, twoside, notitlepage, 12pt]{amsart} 
\usepackage{amsmath,amsfonts,amssymb,amsthm,mathrsfs} 
\usepackage[matrix,arrow]{xy}

\usepackage{verbatim}
\usepackage[latin1]{inputenc}   
\usepackage{backref}

\title[Weil Restriction and the  Quot scheme]{Weil Restriction and the Quot scheme}
\author{Roy Mikael Skjelnes}
\address{Department of Mathematics \\ KTH \\ Stockholm \\ Sweden}
\email{skjelnes@kth.se}

\subjclass[2010]{14A20, 14C05, 14D22}
\keywords{Weil restriction, Quot scheme, Fitting ideals}
\thanks{}

\date{280912}

\DeclareMathOperator{\id}{id}
\DeclareMathOperator{\im}{Im}
\DeclareMathOperator{\Spec}{Spec}

\newcommand{\ra}{\longrightarrow}

\newcommand{\Weilmodule}[3]{\mathscr{M}od_{{#1}\to {#2}}^{#3}}

\newcommand{\co}[1]{F({#1})}

\newcommand{\stackquot}{\mathfrak{Quot}}
\newcommand{\stackU}{\mathscr{U}}
\newcommand{\Coh}[1]{{\mathscr C}oh^n_{{#1}/S}}
\newcommand{\mono}[1]{\mathbf{{#1}}}
\newcommand{\corr}[1]{{#1}^{\dagger}}
\newcommand{\dual}[1]{{#1}^{\star}}

\newcommand{\Weil}[2]{\mathfrak{R}_{{#2}/{#1}}}
\newcommand{\NCWeil}[3]{\underline{\mathrm{Hom}}_{{#1}}({#2}, {#3})}

\newcommand{\Iso}[3]{\underline{\mathrm{Im}}^{#3}_{{#1}={#2}}}
\newcommand{\NN}{\mathbb{N}}

\newcommand{\Quot}[2]{\underline{\mathrm{Quot}}^n_{{#2}_{#1}/S}}

\newcommand{\eqbeg}{\begin{equation}}
\newcommand{\eqend}{\end{equation}}




\newcommand{\calA}{\mathscr{A}}
\newcommand{\calJ}{\mathscr{J}}
\newcommand{\calE}{\mathscr{E}}

\newcommand{\calO}{\mathscr{O}}
\newcommand{\calI}{\mathscr{I}}
\newcommand{\calM}{\mathscr{M}}

\newcommand{\calF}{\mathscr{F}}

\renewcommand{\hom}{\mathrm{Hom}}
\newcommand{\Snbimap}{\xymatrix@M=1pt{ {\Sgot_n\times U}
\ar@<.5ex>[r] \ar@<-.5ex>[r] & {U}}}
\newcommand{\bimap}{\xymatrix@M=1pt{ {R}
\ar@<.5ex>[r] \ar@<-.5ex>[r] & {X}}}
\newcommand{\pbbimap}{\xymatrix@M=1pt{ {R\times_{A}Y}
\ar@<.5ex>[r] \ar@<-.5ex>[r] & {X\times_AY}}}
\newcommand{\grpbimap}{\xymatrix@M=1pt{ {G\times X}
\ar@<.5ex>[r] \ar@<-.5ex>[r] & {X}}}
\newcommand{\pibimap}{\xymatrix@M=1pt{ {R}
\ar@<.5ex>[r]^-{\pi_1} \ar@<-.5ex>[r]_-{\pi_2} & {X}}}
\newcommand{\quotmap}{\xymatrix@M=1pt{ {R/G}
\ar@<.5ex>[r]^-{\pi_1} \ar@<-.5ex>[r]_-{\pi_2} & {U/G}}}

\newtheorem{summary}{Theorem}
\newtheorem{thm}[subsection]{Theorem}
\newtheorem{lemma}[subsection]{Lemma}
\newtheorem{cor}[subsection]{Corollary}
\newtheorem{prop}[subsection]{Proposition}

\theoremstyle{definition}
\newtheorem{defn}[subsection]{Definition}
\newtheorem{ex}[subsection]{Example}

\theoremstyle{remark}
\newtheorem{rem}[subsection]{Remark}

\numberwithin{equation}{subsection}


\begin{document}

\begin{abstract} We introduce a concept that we call module
  restriction, which generalises the classical Weil restriction. After
  having 
  established some fundamental properties, as existence and
  \'etaleness, we apply our
  results to show that the Quot functor $\Quot{X}{F}$ of
  Grothendieck is representable by an algebraic space, for any quasi-coherent sheaf $F_X$ on
  any separated algebraic space $X/S$.  
\end{abstract}

\maketitle

\section*{Introduction}

The main novelty in this article is the introduction of the {\em
  module restriction}, which is a generalisation of the classical  Weil restriction. Our main motivation for
introducing the module restriction is given by our application
to the Quot functor of Grothendieck.

If $F_X$ is a quasi-coherent sheaf on a scheme $X\ra S$, then the Quot
functor $\underline{\operatorname{Quot}}_{F_X/S}$ parametrises quotients of $F_X$
that are flat and with proper support over the base. For projective
schemes $X\ra S$ the Quot functor is represented by a scheme given as
a disjoint union of projective schemes \cite{FGAIV}. When $X\ra S$ is
locally of finite type and separated, Artin showed that the Quot
functor is representable by an algebraic space (\cite{Artinformalmoduli} and erratum in
\cite{ArtinDeformation}). 

When the fixed sheaf $F_X={\calO}_X$ is the structure sheaf of $X$ the
Quot functor is referred to as the Hilbert functor $\underline{\operatorname{Hilb}}_{X/S}$.

Grothendieck who both introduced the Quot functor and showed
representability for projective $X\ra S$, also pointed out the
connection between the Hilbert scheme and the Weil restriction
\cite[4. Variantes]{FGAIV}. If $f\colon Y \ra X$ is a morphism with
$X$ separated over the base, there is an open subset $\Omega_{Y\to
  X}$ of
$\underline{\operatorname{Hilb}}_{Y/S}$ from where the push-forward
map $f_*$ is defined. The
fibres of $f_* \colon \Omega_{Y\to X}\ra  \underline{\operatorname {Hilb}}_{X/S}$  are identified with the Weil restrictions.

However, even though the Weil restriction appears naturally in
connection with Hilbert schemes, there does not seem to exist any description of
the more general situation with the Quot scheme replacing the Hilbert
scheme. The purpose of this article is to give such a description with
the Quot functor $\Quot{X}{F}$ parametrising quotients of $F_X$
that are flat, with {\em finite} support and of relative rank $n$, over the base. In order to do
so we will need to introduce a generalisation of the Weil restriction.

Generalisations of the Weil restriction exist (\cite{Olsson} and \cite{SGA4}), but those
generalisations leap off in different directions than what is needed
for the present discussion. The generalisation we undertake here is in the
direction from ideals to modules. 

We fix a homomorphism of
$A$-algebras $B \ra R$, and a $B$-module $M$. The
module restriction
$\Weilmodule{B}{R}{M}$ parametrises, as a functor from $A$-algebras to
sets, $R$-module structures extending the
fixed $B$-module structure on $M$.

When $M$ is  finitely generated and projective as
an $A$-module, we 
show that the module restriction $\Weilmodule{B}{R}{M}$ is
representable by an $A$-algebra. We show representability by constructing the representing object in
the free algebra situation, and using Fitting ideals in the general
situation.  

Furthermore, when the $B$-module $M$ is a quotient
 of $B$, then we obtain that the module restriction  $\Weilmodule{B}{R}{M}$ coincides with  the Weil restriction.

These observations are summarised by the following result.

\begin{summary} Let $X\ra S$ be a separated morphism of schemes (or
  algebraic spaces) and let ${\mathscr C}oh_{X/S}^n$ denote the stack of
quasi-coherent sheaves on $X$ that are flat, with finite support and of
relative rank $n$ over the base $S$. For any affine morphism $f\colon
Y \ra X$ the push-forward map
$$
(\star) \quad f_* \colon {\mathscr C}oh_{Y/S}^n \ra {\mathscr C}oh_{X/S}^n
$$
is schematically representable. 
\end{summary}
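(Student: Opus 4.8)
The plan is to reduce the claim to the representability of the module restriction $\Weilmodule{B}{R}{M}$ established above. To prove that $f_*$ is schematically representable we must show that for every scheme $T$ and every morphism $T\to\Coh{X}$ the fibre product $\Coh{Y}\times_{\Coh{X}}T$ is a scheme; since this can be checked after an affine cover of $T$, it suffices to fix an affine scheme $T=\Spec A$, a quasi-coherent sheaf $\calF$ on $X_T$ that is flat, of relative rank $n$ and with finite support over $T$ (this being the datum of the morphism $T\to\Coh{X}$), and to prove that $\mathfrak{F}:=\Coh{Y}\times_{\Coh{X}}\Spec A$ is a scheme. By construction an $A'$-point of $\mathfrak{F}$ is a pair $(\mathscr{G},\varphi)$ with $\mathscr{G}\in\Coh{Y}(A')$ together with an isomorphism $\varphi\colon f_*\mathscr{G}\xrightarrow{\sim}\calF_{A'}$, where $\calF_{A'}$ denotes the pullback of $\calF$ to $X_{A'}$ and similarly $\mathscr{G}$ lives on $Y_{A'}$.

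First I would observe that $\mathfrak{F}$ is (equivalent to) a sheaf of sets. Since $f$ is affine, the functor $f_*$ on quasi-coherent sheaves is faithful and exact, so any automorphism $\psi$ of an object $(\mathscr{G},\varphi)$ satisfies $f_*\psi=\id$ and hence $\psi=\id$; thus each groupoid $\mathfrak{F}(A')$ is rigid. Moreover, again because $f$ is affine, $f_*$ identifies quasi-coherent $\calO_{Y_{A'}}$-modules with quasi-coherent $f_*\calO_{Y_{A'}}$-modules; so giving $(\mathscr{G},\varphi)$ over $A'$ is the same as giving a quasi-coherent $f_*\calO_{Y_{A'}}$-module structure on $\calF_{A'}$ extending its $\calO_{X_{A'}}$-module structure, subject to the associated sheaf on $Y_{A'}$ lying in $\Coh{Y}(A')$.

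The main work lies in passing to an affine picture. Let $W=V(\operatorname{Ann}_{\calO_{X_T}}\calF)\subset X_T$ be the scheme-theoretic support of $\calF$. Since $\calF$ has finite support over $\Spec A$, the scheme $W$ is finite, hence affine, over $\Spec A$; write $W=\Spec C$ with $C$ a finite $A$-algebra, and set $M=\Gamma(W,\calF)$, a finitely generated projective $A$-module of rank $n$ carrying a $C$-module structure. Because $f$ is affine, $Y_T\times_{X_T}W$ is affine, say $Y_T\times_{X_T}W=\Spec R$ for a $C$-algebra $R$. For any $(\mathscr{G},\varphi)$ over $A'$ the pullback to $Y_{A'}$ of $\operatorname{Ann}_{\calO_{X_T}}\calF$ annihilates $\mathscr{G}$, since it annihilates $f_*\mathscr{G}\cong\calF_{A'}$ and $f_*$ is faithful; hence $\mathscr{G}$ is scheme-theoretically supported on $(Y_T\times_{X_T}W)\times_{\Spec A}\Spec A'=\Spec(R\otimes_A A')$, which is finite over $\Spec A'$, and $\Gamma(\mathscr{G})\cong\Gamma(\calF_{A'})\cong M\otimes_A A'$. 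Consequently $\mathscr{G}$ is automatically flat, of rank $n$ and with finite support over $\Spec A'$, so the requirement $\mathscr{G}\in\Coh{Y}(A')$ imposes nothing extra, and conversely every $R\otimes_A A'$-module structure on $M\otimes_A A'$ extending the given $C\otimes_A A'$-module structure produces such a $(\mathscr{G},\varphi)$. This yields a functorial identification of $\mathfrak{F}$, as a functor on $A$-algebras, with the module restriction $\Weilmodule{C}{R}{M}$.

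Finally, since $M$ is finitely generated and projective as an $A$-module, the representability theorem for module restrictions recalled above shows that $\Weilmodule{C}{R}{M}$ is representable by an $A$-algebra; hence $\mathfrak{F}$ is an affine scheme, and $f_*$ is schematically representable. (When in addition $\calF$ is a quotient of $\calO_{X_T}$, the module restriction $\Weilmodule{C}{R}{M}$ coincides with the Weil restriction of $Y_T\times_{X_T}W$ along $f$, which recovers Grothendieck's comparison of Hilbert schemes with Weil restrictions.) The step I expect to demand the most care is the reduction in the third paragraph: checking that finiteness of the support genuinely permits trading the geometry of $X$ and $Y$ for the rings $C$ and $R$, that $\mathscr{G}$ is forced to be an honest $R\otimes_A A'$-module, and that no residual flatness or support conditions survive; once this is in place, the representability assertion is exactly the module-restriction theorem.
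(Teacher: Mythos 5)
Your overall route is the one the paper takes: identify the fibre of $f_*$ over an affine $T$-valued point of $\Coh{X}$ with a module restriction and conclude by Theorem \ref{Weilmodule is rep}. The genuine variation is that you pass at once to the scheme-theoretic support $W=\Spec(C)$ of $\calF$ (finite over $T$ by the definition of $\Coh{X}$, hence affine) and to $Y_T\times_{X_T}W=\Spec(R)$, so that a general separated $X$ is handled in one stroke; the paper instead first treats the affine case $X=\Spec(B)$ (Theorem \ref{affine push forward}), viewing $M$ as a $B$-module rather than a $C$-module, and then globalizes in Corollary \ref{relatively representable} via base change and Lemma \ref{glue}. Your rigidity remark (faithfulness of $f_*$ for affine $f$), the identification $\Gamma(W_{A'},\calF_{A'})\cong M\otimes_AA'$, and the transport of the $R\otimes_AA'$-structure along $\varphi$ are all fine, and the support trick is a clean way to organize the reduction.

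There is, however, one concretely false step: you assert that $\Spec(R\otimes_AA')$ is finite over $\Spec(A')$. Only $W\ra T$ is finite; $Y_T\times_{X_T}W\ra W$ is merely affine, since $f$ is only assumed affine (take $Y=X\times_S\mathbb{A}^1_S$), so $R$ is an arbitrary $C$-algebra and $\Spec(R\otimes_AA')\ra\Spec(A')$ is in general not finite. This claim is exactly what you use to conclude that $\mathscr{G}$ ``automatically'' has finite support over $A'$, i.e.\ that membership in $\Coh{Y}(A')$ imposes nothing extra --- the step needed for the converse inclusion identifying the fibre with the full module restriction. What one actually gets is that $\mathrm{Supp}(\mathscr{G})$ is the closed subscheme of $\Spec(R\otimes_AA')$ cut out by $\operatorname{ann}(M\otimes_AA')$, and that $(R\otimes_AA')/\operatorname{ann}(M\otimes_AA')$ embeds into $\End_{A'}(M\otimes_AA')$, hence is integral over $A'$ by Cayley--Hamilton (this is how the paper argues in Proposition \ref{Isorepresentability}); but integrality is weaker than finiteness, and module-finiteness of this quotient is not automatic without Noetherian or finite-type hypotheses. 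Flatness and the rank condition are indeed automatic, as you say. To be fair, the paper's own proof of Theorem \ref{affine push forward} asserts without comment that the sheaf attached to a point of the module restriction lies in $\Coh{Y}(A')$, so at this point your argument is no less complete than the paper's --- but the specific justification you give is wrong and should be replaced by the annihilator/integrality discussion, or by whatever reading of the finite-support condition makes that assertion legitimate.
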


The fibers of the push-forward map $(\star)$ are the module restrictions
parametrising sheaves on ${\calF}$ on $Y$ that are flat, finite, and
of relative rank $n$ over the base, such that the
push-forward $f_*\calF$ is isomorphic to a fixed $\calE$ on $X$. 

When the morphism $f\colon
Y \ra X$ is \'etale, then the the push-forward map $(\star )$
is not in general \'etale. The push-forward map $(\star)$ is only \'etale when
restricted to the open substack ${\stackU}_{Y\to X}$ consisting of
sheaves ${\calF} \in \Coh{Y}$ on $Y$, such that the induced map
of supports
$$ \mathrm{Supp}({\calF}) \ra \mathrm{Supp}(f_*{\calF})$$
is an isomorphism.

These requirements concerning the support of the sheaves, highlights differences between the Quot functor and the Hilbert functor. Fibers of $(\star)$ over $\calE=\calO_Z$, structure sheaves of closed subschemes $Z\subseteq X$, are \'etale when $f\colon X \ra Y$ is \'etale. 
 
Let $f_* \colon {\stackU}_{Y\to X} \ra \Coh{X}$ also denote the restriction of $(\star )$ to the open substack where the induced map of supports is an isomorphism. If we denote $Z=\mathrm{Supp}({\calE})$, the support  of a given element $\calE$ in $\Coh{X}$, then the fiber over $\calE$ is the Weil restriction of $Y\times_XZ \ra Z$. Thus, the Weil restriction appears naturally in the more general context with the Quot functors as well. Even though the support $Z=\mathrm{Supp}(\calE) \ra S$ is not necessarily flat, it turns out that in our situation the Weil restriction of $Y\times_XZ \ra Z$ still exists as a scheme.

Having established these technical results concerning the support, the representability of $\Quot{X}{F}$ follows easily. Let
$F_Y$ denote the pull-back of the quasi-coherent sheaf $F_X$ along
$f\colon Y\ra X$. There is a natural, forgetful, map $\Quot{X}{F} \ra
\Coh{X}$ whose pull-back along the push-forward map $(\star )$
restricted to ${\stackU}_{Y\to X}$, gives a representable, \'etale
covering
$$ \Omega_{Y\to X}^F \ra \Quot{X}{F}.$$
This \'etale cover specializes in the Hilbert functor situation, that is with $F_X=\calO_X$, to  the classical cover mentioned eariler, with the fibers being Weil restriction. We obtain the following result.

\begin{summary} Let $X\ra S$ be a separated map of algebraic spaces,
  and $F_X$ a quasi-coherent sheaf on $X$. Then the Quot functor
  $\Quot{X}{F}$ is representable by an algebraic space.
\end{summary}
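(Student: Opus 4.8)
The plan is to exhibit a representable, \'etale, surjective morphism onto $\Quot{X}{F}$ from an algebraic space, and then to invoke the standard bootstrap criterion: a sheaf for the \'etale topology admitting such a morphism from an algebraic space is itself an algebraic space. One first reduces to the situation where $S$ is affine and $X$ is quasi-compact and separated. Indeed, being an algebraic space is local on $S$; and since a flat quotient of $F_X$ with finite support over an affine test scheme $T$ has quasi-compact support in $X_T$, that support is contained in $V\times_S T$ for some quasi-compact open subspace $V\subseteq X$, so that the open subfunctors $\Quot{V}{F}$ cover $\Quot{X}{F}$.

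Choose an affine \'etale surjection $f\colon Y\to X$ with $Y$ a finite disjoint union of affine schemes; such an $f$ exists because $X\to S$ is separated and $S$ is affine (each $Y_i\times_X Y_j$ is then a closed subscheme of the affine scheme $Y_i\times_S Y_j$). Let $F_Y$ be the pull-back of $F_X$ along $f$, and pull the forgetful map $\Quot{X}{F}\ra\Coh{X}$, $(F_X\twoheadrightarrow\calE)\mapsto\calE$, back along the push-forward $(\star)$ restricted to the open substack $\stackU_{Y\to X}$,
\[ f_*\colon \stackU_{Y\to X}\ra\Coh{X}, \]
which by the results above is schematically representable and, since $f$ is \'etale and we have passed to $\stackU_{Y\to X}$, also \'etale. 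This produces the functor $\Omega_{Y\to X}^F$ of the introduction together with a representable, \'etale morphism $\Omega_{Y\to X}^F\ra\Quot{X}{F}$. The source is an algebraic space: using that, for $\calF\in\stackU_{Y\to X}$, the counit $f^{*}f_{*}\calF\to\calF$ and the unit $F_{X_T}\to f_{*}F_{Y_T}$ are isomorphisms near $\mathrm{Supp}(\calF)$, one identifies $\Omega_{Y\to X}^F$ with the open subfunctor of $\Quot{Y}{F}$ lying over $\stackU_{Y\to X}\subseteq\Coh{Y}$; and $\Quot{Y}{F}$ is an algebraic space because $Y$ is a finite disjoint union of affine schemes over the affine $S$, which is the affine case, settled by the representability of the module restriction (the locus of surjections carrying a free action).

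Finally I would check surjectivity of $\Omega_{Y\to X}^F\ra\Quot{X}{F}$. Given a quotient $F_{X_T}\twoheadrightarrow\calE$ over a scheme $T$ with finite support $Z\subseteq X_T$, the base change $Y\times_X Z\ra Z$ is \'etale, separated and surjective, and since $Z\ra T$ is finite it admits a section \'etale-locally on $T$; such a section is an open and closed immersion, identifying $Z$ with a subscheme $Z'$ of $Y\times_X Z$, and since $Y\times_X Z$ is closed in $Y_T$ (as $Z\hookrightarrow X_T$ is a closed immersion) $Z'$ is closed in $Y_T$, with $f$ restricting to an isomorphism $Z'\ra Z$. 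Transporting $\calE$ along $Z\cong Z'$ yields a sheaf $\calF\in\stackU_{Y\to X}(T)$ with $f_*\calF\cong\calE$ compatibly with the quotient, i.e.\ a lift of the given point. Since $\Quot{X}{F}$ is a sheaf for the \'etale topology by faithfully flat descent, the bootstrap criterion gives that $\Quot{X}{F}$ is an algebraic space; equivalently, $R:=\Omega_{Y\to X}^F\times_{\Quot{X}{F}}\Omega_{Y\to X}^F$ is canonically $\Omega_{Y\to X}^F\times_{\Coh{X}}\stackU_{Y\to X}$, an algebraic space because $\stackU_{Y\to X}\ra\Coh{X}$ is representable, and $\Quot{X}{F}=\Omega_{Y\to X}^F/R$ is the quotient by an \'etale equivalence relation. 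The mathematical substance has all been placed earlier --- representability of the module restriction via free presentations and Fitting ideals, schematic representability of $f_*$, its \'etaleness over $\stackU_{Y\to X}$, and the analysis of supports yielding the Weil restriction description of the fibres --- and within this final argument the step most in need of care, which I would name as the main obstacle, is the surjectivity of the cover: that every finite, flat, rank-$n$ quotient of $F_X$ lifts, \'etale-locally on the base, to a quotient on $Y$ respecting the support condition.
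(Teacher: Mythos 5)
Your argument is essentially the paper's own proof: reduce to affine $S$ and quasi-compact $X$ using that quotients have quasi-compact support, choose an affine \'etale surjection $f\colon Y\to X$, obtain the schematically representable, \'etale map $\Omega^F_{Y\to X}\to \Quot{X}{F}$ from the Cartesian diagrams over ${\stackU}_{Y\to X}$ with $\Omega^F_{Y\to X}$ open in the scheme $\Quot{Y}{F}$, and verify surjectivity from finiteness of the support and \'etaleness of $f$ --- the paper does this last step in the more elementary form of lifting field-valued points after a finite separable extension, which is the pointwise version of your \'etale-local section of $f^{-1}(Z)\to Z$. One small correction: the representability of $\Quot{Y}{F}$ for affine $Y\to S$ is not deduced from the module restriction results but is imported as a separate known result, so it should be cited as an input rather than attributed to that machinery.
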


In particular this generalises the result about the representability
of the Hilbert functor $\underline{\operatorname{Hilb}}^n_{X/S}$
  described in \cite{ES}. See also the generalisation to Hilbert
  stacks in \cite{RydhHilb}. The result also extends the earlier mentioned result of
  Artin in the sense that we do not assume the map $X\ra S$ to
  be of finite type, and there is no restriction on the base $S$.

\subsection*{Acknowledgements} Comments and corrections from Dan
Laksov and David
Rydh were important for the presentation of this
manuscript. Discussion with Runar Ile about non-commutative ring
theory were also helpful and clarifying.

\section{Fitting ideals}


We will in this first section point out some facts about
Fitting ideals that we will use later on.


\subsection{Conventions} A commutative ring $A$ is always a unital commutative ring. The
category of $A$-algebras, means the category of commutative
$A$-algebras. 

\begin{lemma}\label{fitting} Let $E$ be a projective $A$-module of rank
  $n$. Let $E\ra Q$ be a quotient module, and let $F_{n-1}(Q)\subseteq
  A$ denote the $(n-1)$'st Fitting ideal of $Q$. Then the $A$-module
  map $E\ra Q$ is an
  isomorphism if and only if $F_{n-1}(Q)=0$ is the zero ideal. In
  particular we have that a ring homomorphism $A \ra A'$ will
  factorise via $A/F_{n-1}(Q)$ if and only if  $E\bigotimes_AA '\ra
  Q\bigotimes_AA'$ is an isomorphism.
\end{lemma}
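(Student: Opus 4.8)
The plan is to work locally and reduce to the case where $E$ is free. Since $E$ is projective of rank $n$, we can cover $\Spec(A)$ by basic open sets $\Spec(A_g)$ on which $E_g$ is free of rank $n$. Fitting ideals commute with localization (indeed with arbitrary base change), so $F_{n-1}(Q)_g = F_{n-1}(Q_g)$, and the statement "$F_{n-1}(Q) = 0$" can be checked on such a cover; likewise "$E \to Q$ is an isomorphism" is local. So without loss of generality $E \cong A^n$.

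Now with $E = A^n \to Q$ surjective, present $Q$ by a matrix: $Q = \operatorname{coker}(A^m \xrightarrow{\varphi} A^n)$ for some $m$ (possibly infinite, but a finite presentation is not needed — Fitting ideals of the cokernel of $\varphi$ are generated by minors of $\varphi$, and the $(n-1)$st Fitting ideal $F_{n-1}(Q)$ is generated by the $1\times 1$ minors, i.e. by all the entries of $\varphi$). Hence $F_{n-1}(Q) = 0$ if and only if every entry of $\varphi$ is zero, which happens if and only if $\varphi = 0$, which happens if and only if $A^n \to Q$ is an isomorphism. This is essentially the only computation, and it is the $r=1$ case of the standard fact that $F_{n-r}$ of an $n$-generated module is the ideal of $r\times r$ minors of a presentation matrix. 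I would cite or quickly recall this; the main point to be careful about is the indexing convention for Fitting ideals (here $F_{n-1}$ of an $n$-generated module is the entries ideal), and the fact that surjectivity of $E \to Q$ lets us take the presentation with exactly $n$ generators so that "$E \to Q$ iso" translates to "$\varphi = 0$".

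For the last sentence: given a ring map $A \to A'$, base change is right exact, so $Q \otimes_A A' = \operatorname{coker}(A'^m \to A'^n)$ with the base-changed matrix, and $F_{n-1}(Q \otimes_A A') = F_{n-1}(Q)\cdot A'$, the extension of the ideal. The map $A \to A'$ factors through $A/F_{n-1}(Q)$ precisely when $F_{n-1}(Q)\cdot A' = 0$, i.e. when $F_{n-1}(Q \otimes_A A') = 0$, which by the first part (applied over $A'$, using that $E \otimes_A A'$ is projective of rank $n$ over $A'$) is equivalent to $E \otimes_A A' \to Q \otimes_A A'$ being an isomorphism.

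I don't expect a serious obstacle here — the statement is a clean application of the standard properties of Fitting ideals (compatibility with base change, and the description via minors of a presentation matrix). The only thing requiring a little care is setting up the presentation so that the surjection $E \to Q$ corresponds to killing exactly the column space of $\varphi$, so that "isomorphism" becomes "$\varphi = 0$"; after that the equivalences are immediate.
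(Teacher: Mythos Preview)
Your proposal is correct and follows essentially the same approach as the paper: reduce to the free case by localization, then invoke the definition of the Fitting ideal via minors of a presentation matrix. The paper's own proof is a two-line sketch (``check locally, then use the definition''), and your write-up simply fills in the details that the paper leaves implicit.
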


\begin{proof} The statement can be checked locally on $A$, hence we
  may assume that $E$ is free of finite rank $n$. The result then
  follows from the definition of the Fitting ideal.
\end{proof}

\subsection{Rank of projective modules} The rank of a projective module $E$ is constant on the
connected components of $\Spec(A)$. We will employ  the following
notation. Let $I\subseteq E$ be a submodule of a finitely generated
and projective module $E$. We let
$$\mathrm{Fitt}(I) : =F_{\mathrm{rk}E-1}(E/I) \subseteq A$$
denote the Fitting ideal we obtain by assigning on each connected component
of $\Spec(A)$ the Fitting ideal $F_{n-1}(E/I)$, where $n$ is the rank of
$E$ on that particular component.

\subsection{Closed conditions} Let $F$ be a co-variant functor from the
category (or a subcategory) of $A$-algebras  to sets. We
say that $F$ is a {\em closed condition} on $A$ if the functor $F$ is
representable by a quotient algebra of $A$.

\begin{prop} Let $\xi \colon R\ra E$ be an
  $A$-module homomorphism. Assume that  $E$ is finitely generated and projective as an
  $A$-module.
\begin{enumerate}
\item Let $I\subseteq R$ be a submodule. Then $\xi$
  factorising via the quotient map $R \ra R/I$ is a closed condition
  on $A$. 
\item Let $\xi'\colon R \ra E$ be another $A$-module homomorphism. Then
  $\xi $ being equal to $\xi'$ is a closed condition on $A$.
\end{enumerate}
\end{prop}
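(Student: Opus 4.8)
The plan is to reduce both statements to Lemma~\ref{fitting} by producing, in each case, a suitable quotient module of a finitely generated projective module and then taking its Fitting ideal. Throughout, for an $A$-algebra $A'$ I will write $E_{A'} = E\bigotimes_A A'$, and similarly for $R$ and for the maps, and I will use that formation of Fitting ideals commutes with base change.

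\begin{proof}
For part (1), let $\xi \colon R \ra E$ be the given $A$-module homomorphism and $I \subseteq R$ a submodule. For an $A$-algebra $A'$, the base-changed map $\xi_{A'}\colon R_{A'} \ra E_{A'}$ factors through $R_{A'}\ra (R/I)_{A'} = R_{A'}/I_{A'}$ precisely when $\xi_{A'}$ kills the image of $I_{A'}$ in $R_{A'}$, i.e. when the composite $I \ra R \xrightarrow{\xi} E$, base-changed to $A'$, is zero. Let $J = \xi(I) \subseteq E$ be the image submodule, so this condition says $J_{A'} = 0$ inside $E_{A'}$, equivalently that $E_{A'} \ra (E/J)_{A'}$ is an isomorphism. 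Since $E$ is finitely generated and projective, so is $E/J$ need not be, but $J$ is a submodule of the finitely generated module $E$; what we actually need is the quotient $E \ra E/J$, and Lemma~\ref{fitting} (applied component-wise, using the notation $\mathrm{Fitt}(J) = F_{\mathrm{rk}E-1}(E/J)$ from Section~1.3) tells us that $E_{A'}\ra (E/J)_{A'}$ is an isomorphism if and only if the map $A \ra A'$ factors through $A/\mathrm{Fitt}(J)$. Hence the functor in (1) is represented by the quotient algebra $A/\mathrm{Fitt}(\xi(I))$, and is therefore a closed condition on $A$.

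For part (2), given two homomorphisms $\xi, \xi' \colon R \ra E$, their difference $\xi - \xi' \colon R \ra E$ is again an $A$-module homomorphism, and for any $A$-algebra $A'$ we have $\xi_{A'} = \xi'_{A'}$ if and only if $(\xi - \xi')_{A'} = 0$, i.e. if and only if $\xi_{A'} - \xi'_{A'}$ factors through the zero quotient $R_{A'} \ra R_{A'}/R_{A'} = 0$. This is exactly the condition in part (1) with the submodule $I = R$ and the homomorphism $\xi - \xi'$ in place of $\xi$. So (2) follows from (1), and the representing object is $A/\mathrm{Fitt}\bigl((\xi - \xi')(R)\bigr)$.

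The only real point requiring care is the interplay between Lemma~\ref{fitting}, which is stated for a fixed projective module of constant rank $n$, and the fact that a general finitely generated projective $E$ may have rank varying over the connected components of $\Spec(A)$. This is handled exactly as in Section~1.3: one works component by component, forming $F_{n-1}(E/J)$ with $n$ the rank on the given component, and the resulting ideal $\mathrm{Fitt}(J)$ glues to a well-defined ideal of $A$ because $\Spec(A)$ is the disjoint union of its connected components on which the rank is locally constant; the base-change compatibility of Fitting ideals is likewise local and hence unaffected. With that bookkeeping in place, both assertions are immediate consequences of Lemma~\ref{fitting}.
\end{proof}
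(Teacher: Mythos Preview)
Your proof is correct and follows essentially the same approach as the paper: in both parts you take the relevant image submodule of $E$ (namely $\xi(I)$ for part~(1) and $(\xi-\xi')(R)$ for part~(2)), form the associated Fitting ideal $\mathrm{Fitt}(-)$, and invoke Lemma~\ref{fitting} together with base-change compatibility of Fitting ideals. The paper's proof is terser but identical in substance; your explicit reduction of~(2) to~(1) via the difference map $\xi-\xi'$ is exactly what produces the paper's submodule $I_2=\{\xi(x)-\xi'(x)\mid x\in R\}$.
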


\begin{proof} In the first situation consider the Fitting ideal
  $\mathrm{Fitt}(I_1)$, of the quotient module of $E$ given by
  $I_1=\xi(I)$. In the second situation consider the Fitting ideal
  $\mathrm{Fitt}(I_2)$, of the quotient module of $E$ given by the $A$-submodule 
$$ I_2 =\{\xi(x)-\xi'(x) \mid x \in R\}.$$
It then follows from Lemma \ref{fitting} that assertions 1 and 2 
are represented by the quotient algebras
$A/\mathrm{Fitt}(I_1)$ and $A/\mathrm{Fitt}(I_2)$,  respectively.
\end{proof}

\subsection{}  With an $A$-algebra $E$, with
$E$ not necessarily commutative, we mean a unital ring homomorphism $c\colon A
\ra E$ from a commutative ring $A$, to an associative, unital ring
$E$, and where the image $c(A)$ is contained in the centre
of $E$ \cite{Bourbaki_Alg1_3}.

\begin{cor}\label{closed conditions} Let $\xi \colon R \ra E$ be an $A$-algebra homomorphism
  between two not necessarily commutative, $A$-algebras. Assume that
  $E$ is finitely generated and projective as an $A$-module.
\begin{enumerate}
\item Let $I\subseteq R$ be a two-sided ideal. Then $\xi$ factorising
  via the quotient map $R \ra R/I$ is a closed condition on $A$.
\item Let $\xi'\colon R \ra E$ be another $A$-algebra
  homomorphism. Then $\xi$ being equal to $\xi'$ is a closed condition
  on $A$.
\end{enumerate}
\end{cor}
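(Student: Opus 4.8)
The plan is to deduce Corollary \ref{closed conditions} from the immediately preceding Proposition by forgetting the multiplicative structure and applying the module-theoretic statement to suitable submodules. The key observation is that an $A$-algebra homomorphism is in particular an $A$-module homomorphism, and that the conditions in question—factoring through a quotient by a two-sided ideal, and agreeing with a second homomorphism—are detected entirely at the level of the underlying modules, once one is slightly careful about which submodule of $R$ to take.

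For part (1), let $\xi\colon R\ra E$ be the given $A$-algebra map and $I\subseteq R$ a two-sided ideal. I would first note that $\xi$ factors through $R/I$ as a map of rings if and only if $\xi(I)=0$, which is the same as saying the underlying $A$-module map $\xi\colon R\ra E$ factors through the quotient $A$-module $R/I$. Since $E$ is finitely generated and projective as an $A$-module by hypothesis, part (1) of the Proposition applies verbatim to the $A$-module map $\xi$ and the $A$-submodule $I\subseteq R$: the condition is represented by the quotient algebra $A/\mathrm{Fitt}(\xi(I))$. The only point to check is that when the module map does factor through $R/I$ over some $A$-algebra $A'$, the resulting factorisation $R/I\ra E\otimes_A A'$ is automatically a ring homomorphism and not merely a module map; this is immediate because $R\ra R/I$ is a surjective ring map and $\xi\otimes_A A'$ is a ring map, so the induced map on the quotient respects multiplication and the unit.

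For part (2), given a second $A$-algebra homomorphism $\xi'\colon R\ra E$, I would apply part (2) of the Proposition directly to the two underlying $A$-module maps $\xi,\xi'\colon R\ra E$. That already gives a closed condition, represented by $A/\mathrm{Fitt}(I_2)$ with $I_2=\{\xi(x)-\xi'(x)\mid x\in R\}$, over which $\xi\otimes_A A'=\xi'\otimes_A A'$ as module maps. But equality of module maps is the same as equality of ring maps, so there is nothing further to verify: the same quotient algebra represents the algebra-level condition. In both parts the base change behaviour—that a ring map $A\ra A'$ satisfies the condition after base change precisely when it factors through the displayed quotient—is inherited from Lemma \ref{fitting} exactly as in the Proposition.

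Since every step is a direct translation of the already-established module statement, there is no genuine obstacle; the only thing requiring a word of care is the verification that the factorisations produced by the module-level universal property are compatible with the ring structure, and this follows formally from $\xi$, $\xi'$ and the quotient maps all being ring homomorphisms together with right-exactness of $-\otimes_A A'$. One should also remark that a two-sided ideal $I$ of $R$ is in particular an $A$-submodule of $R$ (because the image of $A$ lies in the centre of $R$), so the hypotheses of the Proposition are met.
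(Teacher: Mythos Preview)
Your proposal is correct and follows essentially the same approach as the paper: reduce each condition to the vanishing of a submodule of $E$ (namely $\xi(I)$, respectively $\{\xi(x)-\xi'(x)\}$) and then invoke the preceding Proposition. The paper's proof is just the one-line ``in both cases the question is whether a submodule of $E$ is zero or not, and then the statement follows from the proposition''; your additional remarks about the induced factorisation being a ring map and $I$ being an $A$-submodule are correct but not something the paper pauses over.
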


\begin{proof} In both cases the question is whether a submodule of $E$
  is zero or not, and then the statement follows from the proposition.
\end{proof}
\begin{prop}\label{representing tensor product} Let $\xi_i \colon R_i \ra E$ be two $A$-algebra
  homomorphisms between not necessarily commutative
  $A$-algebras $(i=1,2)$. Assume that $E$ is finitely generated and projective
  as an $A$-module. Then the condition that $\xi_1$ commutes with
  $\xi_2$ is a closed condition on $A$. In particular, if $R_1$ and
  $R_2$ are commutative, then the two $A$-algebra homomorphisms $\xi_i
  \colon R_i \ra E$ factorising via $R_1\bigotimes_AR_2$ is a closed
  condition on $A$.
\end{prop}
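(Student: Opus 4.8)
The plan is to reduce the commutativity condition to the equality-of-homomorphisms condition already handled in Corollary \ref{closed conditions}, and then to reduce the tensor-product factorisation to the commutativity condition. Both reductions are formal, using the universal properties of free products and tensor products of algebras, so the real content is already contained in the Fitting ideal machinery of Lemma \ref{fitting} and the preceding corollary.

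First I would treat the commutativity statement. Given $\xi_i\colon R_i \ra E$ for $i=1,2$, form the free product (coproduct in not-necessarily-commutative $A$-algebras) $R_1 * R_2$, and let $\xi\colon R_1 * R_2 \ra E$ be the induced $A$-algebra homomorphism. Saying that $\xi_1$ commutes with $\xi_2$ means precisely that $\xi(r_1 r_2) = \xi(r_2 r_1)$ for all $r_1 \in R_1$, $r_2 \in R_2$, i.e.\ that $\xi$ kills the two-sided ideal $I \subseteq R_1 * R_2$ generated by the commutators $r_1 r_2 - r_2 r_1$. Since $R_1 * R_2$ is again an $A$-algebra and $E$ is finitely generated projective over $A$, part (1) of Corollary \ref{closed conditions} applies verbatim: $\xi$ factorising through $(R_1 * R_2)/I$ is a closed condition on $A$, represented by $A/\mathrm{Fitt}(\xi(I))$.

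For the second assertion, assume $R_1$ and $R_2$ are commutative. Then $R_1 \bigotimes_A R_2$ is the pushout in commutative $A$-algebras, and it is the quotient of the free product $R_1 * R_2$ by exactly the commutator ideal $I$ above (this is the standard presentation of the tensor product as the "abelianisation relative to the two factors"). A pair of homomorphisms $\xi_i \colon R_i \ra E$ factors through a common homomorphism $R_1\bigotimes_A R_2 \ra E$ if and only if the images commute, so this condition is the same closed condition on $A$ as in the first part. Strictly one should also note that $E$ need not be commutative here, but that is irrelevant: the factorisation through $R_1\bigotimes_A R_2$ only requires that the two subalgebras $\xi_1(R_1)$ and $\xi_2(R_2)$ of $E$ commute elementwise, which is exactly what the commutator ideal encodes.

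I expect the only point requiring care — and the main obstacle, such as it is — to be making the reduction clean: one must check that the free product $R_1 * R_2$ exists and is an $A$-algebra in the sense used here (image of $A$ central), so that Corollary \ref{closed conditions} is genuinely applicable, and that the commutator ideal one quotients by is two-sided, not merely a subset. Once those bookkeeping points are settled, the statement follows immediately from Corollary \ref{closed conditions}(1), with no further computation.
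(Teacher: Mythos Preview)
Your proposal is correct and the underlying idea coincides with the paper's: in both cases the closed condition is cut out by the Fitting ideal of the submodule of $E$ generated by the commutators $\xi_1(x)\xi_2(y)-\xi_2(y)\xi_1(x)$. The organisational difference is that the paper works directly inside $E$: it defines this $A$-submodule $I\subseteq E$ and invokes Lemma~\ref{fitting}, bypassing any free product construction, and then for the second assertion verifies by hand that over $A/\mathrm{Fitt}(I)$ the pair $(\xi_1,\xi_2)$ assembles to a map $\xi_1\otimes\xi_2\colon R_1\bigotimes_A R_2\to E\bigotimes_A A/\mathrm{Fitt}(I)$, and conversely that any factorisation through $R_1\bigotimes_A R_2$ forces the images to commute. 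Your route through $R_1*R_2$ and Corollary~\ref{closed conditions}(1) is a clean conceptual packaging of the same computation, and has the mild advantage that the identification $R_1\bigotimes_A R_2\cong (R_1*R_2)/I$ makes the second assertion an immediate consequence of the first; the paper's direct approach has the advantage of avoiding the existence and base-change bookkeeping for the free product that you flag at the end.
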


\begin{proof} We consider the Fitting ideal
$\mathrm{Fitt}(I)$, where $I\subseteq E$ is the $A$-submodule
$$I =\{\xi_1(x)\xi_2(y)-\xi_2(y)\xi_1(x) \mid x\in R_1, y\in R_2\}.$$
The condition that $\xi_1$ commutes with $\xi_2$ is that the module
$I$ is the zero module. Hence, by Lemma \ref{fitting} we get that
$A/\mathrm{Fitt}(I)$ represents this condition. To prove the second
statement, consider the induced commutative diagram of $A$-algebras and
$A$-algebra homomorphisms
$$\xymatrix{ R_1\bigotimes_A A/\mathrm{Fitt}(I) \ar[r]^{\xi_1\otimes
    1} & E\bigotimes_AA/\mathrm{Fitt}(I) \\
A/\mathrm{Fitt}(I) \ar[u]\ar[r] & R_2\bigotimes_AA/\mathrm{Fitt}(I) \ar[u]^{\xi_2
  \otimes 1}}.$$
Since the images of $\xi_1\otimes 1$ and $\xi_2 \otimes 1$ commute,
over $A/\mathrm{Fitt}(I)$, we get an induced $A$-algebra homomorphism
$$\xymatrix{ \xi_1\otimes \xi_2 \colon R_1\bigotimes_AR_2
  \bigotimes_AA/\mathrm{Fitt}(I) \ar[r] &
  E\bigotimes_AA/\mathrm{Fitt}(I)},$$
sending $x_1\otimes x_2 \otimes a$ to $\xi_1(x_1)\xi_2(x_2)\otimes
a$. Conversely, let $A\ra A'$ be an $A$-algebra, and assume that the
two $A$-algebra homomorphisms $\xi_i \colon R_i \ra E\bigotimes_AA'$
factorises via $R_1\bigotimes_AR_2$. Let $\xi \colon R_1
\bigotimes_AR_2 \ra E\bigotimes_AA'$ denote the induced map. Then in
particular the image of $\xi$ is a commutative $A$-algebra. By the
usual properties of the tensor product we have that $\xi$ is the pair
$(\xi_1,\xi_2)$. Moreover, since the image of $\xi$ is a commutative
subring of $E\bigotimes_AA'$, we have that the images of $\xi_i \colon
R_1 \ra E\bigotimes_AA'$ commute. Then, by the above result, we get
that the homomorphism $A\ra A'$ factorises via $A/\mathrm{Fitt}(I)$.
\end{proof}

\subsection{Trace map} Let $E$ be an $A$-module. The dual module
$\mathrm{Hom}_A(E,A)$ we will denote by $\dual{E}$. The trace map is
the induced $A$-module homomorphism
   \begin{equation}\label{tracemap}
\xymatrix{\operatorname{Tr}\colon  E\bigotimes_A\dual{E}
  \ar[r]& A}.
\end{equation}

\begin{prop} Let $I\subseteq E$ be an inclusion of $A$-modules, where $E$ is finitely
  generated and projective. 
Then we have the identity of ideals
$\operatorname{Tr}(I\bigotimes_A\dual{E})=\mathrm{Fitt}(I)$ in $A$, where
  $\operatorname{Tr}$ denotes the trace map \ref{tracemap}.
\end{prop}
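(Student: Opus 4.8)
The plan is to reduce to the free case by localization, exactly as in the proof of Lemma \ref{fitting}. Since both sides of the claimed identity $\operatorname{Tr}(I\bigotimes_A\dual{E})=\mathrm{Fitt}(I)$ are ideals of $A$ defined functorially, and since forming the image of the trace map commutes with localization (the trace map is compatible with base change, and $\dual{E}$ commutes with localization because $E$ is finitely presented), it suffices to check the equality after localizing at a prime, where $E$ becomes free of some rank $n$. On the other hand, the Fitting ideal $\mathrm{Fitt}(I)=F_{n-1}(E/I)$ is by definition stable under localization as well. So the whole statement comes down to the case $E=A^n$ free.

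In the free case I would argue as follows. Fix the standard basis $e_1,\dots,e_n$ of $E=A^n$ and the dual basis $e_1^\star,\dots,e_n^\star$ of $\dual{E}$. Under the canonical identification $E\bigotimes_A\dual{E}\cong \operatorname{End}_A(E)$, the element $e_i\otimes e_j^\star$ corresponds to the elementary matrix $E_{ij}$, and the trace map \ref{tracemap} is the usual matrix trace: $\operatorname{Tr}(e_i\otimes e_j^\star)=\delta_{ij}$. Now $I\subseteq E$ is generated by some family of column vectors; write them as the columns of a matrix, say $M$, so that $E/I$ has a presentation $A^m \xrightarrow{M} A^n \to E/I \to 0$ (here $m$ is the number of generators, possibly infinite, but we may take $m$ finite since only finitely many generators are needed to compute any particular Fitting minor — or simply work with the possibly infinite generating set and note that the Fitting ideal $F_{n-1}(E/I)$ is generated by the $(n-1)\times(n-1)$ minors of $M$). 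By definition, $F_{n-1}(E/I)$ is the ideal generated by those $(n-1)\times(n-1)$ minors of $M$.

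The key computation is then to identify $\operatorname{Tr}(I\bigotimes_A\dual{E})$ with that same ideal of minors. An element of $I\bigotimes_A\dual{E}$ is an $A$-linear combination of terms $v\otimes \varphi$ with $v\in I$ and $\varphi\in\dual{E}$; writing $v$ in the basis and $\varphi$ in the dual basis, and using that $I$ is generated by the columns $c_1,\dots,c_m$ of $M$, one sees that $I\bigotimes_A\dual{E}$ is the $A$-span of the elements $c_k\otimes e_j^\star$, whose traces are the entries $M_{jk}$ of $M$. Wait — that would only give the $0$th Fitting-type ideal (the ideal of entries), not the $(n-1)$st; so the naive reading is wrong, and the real content is that one must take the submodule of $\operatorname{End}_A(E)$ generated by $I\bigotimes_A\dual{E}$ under composition, i.e. the \emph{two-sided ideal} it generates, before taking traces. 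Concretely, $\operatorname{Tr}\bigl((E\bigotimes_A\dual{E})\cdot(I\bigotimes_A\dual{E})\bigr)$ is spanned by traces of products $N\cdot c_k\otimes e_j^\star$ for arbitrary endomorphisms $N$, and $\operatorname{Tr}(N\cdot (c_k\otimes e_j^\star))=e_j^\star(N c_k)$ ranges over all coordinates of all vectors $Nc_k$; this recovers only the entries of $M$ again. The correct statement, and the one I expect the paper actually proves, is therefore an identity at the level of the \emph{$(n-1)$st exterior power}: applying $\wedge^{n-1}$ to the quotient $E\to E/I$ and tracing — equivalently, the trace of $I\bigotimes_A\dual{E}$ acting on $\wedge^{n-1}E\cong \dual{E}$ via the cofactor construction — produces exactly the ideal of $(n-1)\times(n-1)$ minors, because the entries of the adjugate matrix are precisely those minors. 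So the main obstacle is pinning down the precise form of the trace map in \ref{tracemap} that the paper intends (the trace on $E\otimes\dual{E}=\operatorname{End}(E)$ versus the trace twisted through $\wedge^{n-1}$), and once that is fixed, the free-case computation is a short unwinding of the definition of the adjugate and of Fitting ideals. I would carry it out by: (1) localizing to the free case; (2) identifying $E\bigotimes_A\dual{E}$ with matrices and the relevant trace pairing with the adjugate/cofactor pairing; (3) observing that for a column vector $c$ and a covector $\varphi$, the resulting scalar is a linear combination of $(n-1)\times(n-1)$ minors of the matrix $M$ whose columns generate $I$, with both inclusions of ideals following by choosing $c$ among the generators of $I$ and $\varphi$ among the dual basis; (4) concluding equality with $F_{n-1}(E/I)=\mathrm{Fitt}(I)$.
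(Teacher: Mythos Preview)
Your reduction to the free case and your computation of the trace ideal are both correct, and in fact match the paper's argument exactly: after localizing so that $E$ is free with basis $e_1,\dots,e_n$, the ideal $\operatorname{Tr}(I\otimes_A\dual{E})$ is generated by the coordinates $(f_{\alpha})_k^E$ of a generating set $\{f_{\alpha}\}$ for $I$, i.e.\ by the entries of the presentation matrix $M$ of $E/I$.

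The gap is in the next line, where you write that ``$F_{n-1}(E/I)$ is the ideal generated by those $(n-1)\times(n-1)$ minors of $M$.'' You have the indexing convention for Fitting ideals reversed. For a presentation $A^m \xrightarrow{M} A^n \to Q \to 0$, the $i$-th Fitting ideal $F_i(Q)$ is generated by the $(n-i)\times(n-i)$ minors of $M$. Thus $F_0(Q)$ is generated by the $n\times n$ minors, while $F_{n-1}(Q)$ is generated by the $1\times 1$ minors --- the entries of $M$. (You can sanity-check this against Lemma~\ref{fitting}: $F_{n-1}(E/I)=0$ should be equivalent to $I=0$, which is exactly the statement that all coordinates of all elements of $I$ vanish.) So when you found that the trace ideal equals the ideal of entries of $M$, you were already done: that ideal \emph{is} $F_{n-1}(E/I)=\mathrm{Fitt}(I)$.

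Everything after your ``Wait ---'' is therefore chasing a phantom discrepancy. There is no need for exterior powers, adjugates, or any two-sided ideal closure in $\operatorname{End}_A(E)$; the trace map of \ref{tracemap} is the plain evaluation pairing $v\otimes\varphi\mapsto\varphi(v)$, and the computation you dismissed as ``naive'' is the whole proof.
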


\begin{proof} Both the Fitting ideal and the trace map $\operatorname{Tr}$ commute with
  base change, and we may therefore assume that $E$ is free as an
  $A$-module.  Take a
  presentation of the $A$-module  $I$. That is we consider the map of free
  $A$-modules
\begin{equation}\label{freeAmodulemap} \xymatrix{\bigoplus_{\alpha
    \in\calA}Ae_{\alpha} \ar[r] & E}\end{equation}
determined by sending $e_{\alpha}$ to $f_{\alpha}$, where
$\{f_{\alpha}\}_{\alpha \in \calA}$ is a collection of generators of
the $A$-module $I$. The
cokernel of the map \ref{freeAmodulemap} is by definition $E/I$,
and consequently the $(n-1)$-minors of the map generate the Fitting
ideal $\mathrm{Fitt}(I)$. Let $e_1, \ldots, e_n$ be a basis for
$E$. Any $f \in E$  is then uniquely written as
$$f=\sum_{k=1}^nf_k^Ee_k,$$
with $f_1^E, \ldots, f_n^E$ in $A$. We obtain then that the
$(n-1)$-minors of the map \ref{freeAmodulemap} are
$\{(f_{\alpha})_1^E, \ldots, (f_{\alpha})^E_n\}_{\alpha \in
  \calA}$. We can now relate the Fitting ideal $\mathrm{Fitt}(I)$
to the other ideal $\operatorname{Tr}(I\bigotimes_A\dual{E})$. Let $\dual{e}_1, \ldots, \dual{e}_n$ denote
the dual basis for $\dual{E}$. For each $k=1, \ldots, n$ we have that
$$\operatorname{Tr}(f\otimes\dual{e}_k)=(\sum_{i=1}^nf_i^Ee_i)\otimes\dual{e}_k=f_k^E.$$
Thus $\{(f_{\alpha})_1^E ,\ldots, (f_{\alpha})^E_n\}_{\alpha \in
  \calA}$ also generate the $\operatorname{Tr}(I\bigotimes_A\dual{E})$, and we
have proven the equality of ideals. 
\end{proof}

\section{Parametrising algebra homomorphisms}

\subsection{Notation} If $V$ is an
$A$-module, we let $A[V]$ denote the
symmetric quotient algebra of the full tensor algebra
$T_A(V)=\bigoplus_{n\geq 0} V^{\otimes ^n}$.

\subsection{Preliminaries} Let $A\ra R$ and $A\ra E$ be two, not
necessarily commutative, $A$-algebras. We consider the functor
$\NCWeil{A}{R}{E}$, that to each commutative $A$-algebra $A'$,
assigns the set
$$\xymatrix{ \mathrm{Hom}_{A\text{-alg}}(R, E\bigotimes_AA')}.$$

\begin{rem}Since $E$ is an $A$-algebra, the tensor product
  $E\bigotimes_AA'$ exists and is an $A'$-algebra.
\end{rem}

\begin{prop}\label{mapsfromTV} Let $A$ be a commutative ring, and let
  $c\colon A \ra E$ be  an $A$-algebra, where
  $E$ is not necessarily commutative. Assume that $E$ is finitely
  generated and projective as an $A$-module. For any $A$-module $V$ we
  have that the $A$-algebra $A[V\bigotimes_A\dual{E}]$
  represents $\NCWeil{A}{R}{E}$, with $R=T_A(V)$.
\end{prop}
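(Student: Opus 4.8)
The plan is to exhibit an explicit bijection, natural in the commutative $A$-algebra $A'$, between $\hom_{A\text{-alg}}(T_A(V), E\bigotimes_AA')$ and $\hom_{A\text{-alg}}(A[V\bigotimes_A\dual{E}], A')$. The starting point is the universal property of the tensor algebra: a (not necessarily commutative) $A$-algebra homomorphism $T_A(V)\ra E\bigotimes_AA'$ is the same datum as an $A$-module homomorphism $V \ra E\bigotimes_AA'$. Likewise, since $A[W]$ is the symmetric (commutative) quotient of $T_A(W)$, a commutative $A$-algebra homomorphism $A[V\bigotimes_A\dual{E}]\ra A'$ is the same datum as an $A$-module homomorphism $V\bigotimes_A\dual{E}\ra A'$. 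So the proposition reduces to the natural bijection
\[
\hom_A(V, E\bigotimes_AA') \;\cong\; \hom_A(V\bigotimes_A\dual{E}, A').
\]

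First I would establish this last bijection. Because $E$ is finitely generated and projective as an $A$-module, the canonical map $E\bigotimes_A A' \ra \hom_{A'}(\dual{E}\bigotimes_A A', A')$ is an isomorphism, and dualising commutes with the base change $-\bigotimes_A A'$; equivalently, $E\bigotimes_A A' \cong \hom_A(\dual{E}, A')$ naturally in $A'$. Combining this with the tensor–hom adjunction $\hom_A(V, \hom_A(\dual{E}, A'))\cong \hom_A(V\bigotimes_A\dual{E}, A')$ gives the desired identification, and all the isomorphisms involved are functorial in $A'$. Tracking an element through: a map $\phi\colon V\ra E\bigotimes_AA'$ corresponds to the map $V\bigotimes_A\dual{E}\ra A'$ sending $v\otimes\lambda$ to $(\id_{A'}\otimes\,\Tr)(\phi(v)\otimes\lambda)$, i.e. the contraction of $\phi(v)$ against $\lambda$; conversely a map $\psi\colon V\bigotimes_A\dual{E}\ra A'$ gives back $v\mapsto \sum_i \psi(v\otimes \dual{e}_i)\,e_i$ in the free case, glued over a covering in general.

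Assembling the three identifications — universal property of $T_A(V)$, universal property of the symmetric algebra $A[-]$, and the projectivity isomorphism $E\bigotimes_AA'\cong\hom_A(\dual{E},A')$ together with tensor–hom adjunction — yields the natural bijection
\[
\NCWeil{A}{T_A(V)}{E}(A') \;=\; \hom_{A\text{-alg}}(T_A(V), E\bigotimes_AA') \;\cong\; \hom_{A\text{-alg}}(A[V\bigotimes_A\dual{E}], A'),
\]
which is exactly the claim that $A[V\bigotimes_A\dual{E}]$ represents $\NCWeil{A}{R}{E}$ with $R=T_A(V)$. The only genuinely delicate point is the use of projectivity: for the evaluation/duality map $E\bigotimes_AA'\ra\hom_A(\dual E,A')$ to be an isomorphism one needs $E$ finitely generated and projective (not merely flat), and one should check this locally on $\Spec A$ where $E$ becomes free, then observe that the resulting bijection is independent of the chosen local trivialisation — which follows from the trace-map description above, since the contraction pairing $E\bigotimes_A\dual E\ra A$ is defined intrinsically. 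Everything else is a formal unwinding of adjunctions.
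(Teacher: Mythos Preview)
Your proof is correct and follows essentially the same route as the paper: reduce via the universal properties of $T_A(V)$ and of the symmetric algebra $A[-]$ to a natural bijection $\hom_A(V, E\otimes_A A')\cong \hom_A(V\otimes_A\dual{E}, A')$, and establish the latter using that $E$ is finitely generated projective so that duality commutes with base change, together with tensor--hom adjunction. Your write-up is in fact a bit more explicit (the trace-pairing description and the local check) than the paper's, which simply cites Bourbaki for the base-change isomorphism of duals and leaves the adjunction implicit.
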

\begin{proof} An $A$-algebra homomorphism $u_{A'} \colon T_A(V) \ra E\bigotimes_AA'$ is
  determined by an $A'$-linear map $u_1\colon V\bigotimes_AA'\ra
  E\bigotimes_AA'$. Since $E$ is finitely generated and projective as
  an $A$-module, the $A'$-linear map $u_1$ is equivalent with an
  $A'$-linear map  $\varphi_1 \colon V\bigotimes_AA'
  \bigotimes_{A'}\dual{(E\bigotimes_AA')} \ra A'$. Moreover, the canonical map
$$\xymatrix{\mathrm{Hom}_A(E,A)\bigotimes_AA' \ar[r] &
  \mathrm{Hom}_{A'}(E\bigotimes_AA',A')}$$
is an isomorphism (\cite[4.3. Proposition 7]{Bourbaki_Alg1_3}).
   Therefore $\varphi_1$
corresponds  to an $A$-algebra homomorphism $\varphi \colon
A[V\bigotimes_A\dual{E}] \ra A'$. See e.g. \cite{Dieudonne}, or \cite{Bourbaki_Alg1_3}.
\end{proof}

\begin{cor}\label{AmapsfromR}  Let $A\ra R$ and
  $A \ra E$ be two, not necessarily commutative,  $A$-algebras. Assume that $E$ is finitely generated and projective as
  an $A$-module. Then $\NCWeil{A}{R}{E}$ is representable.
\end{cor}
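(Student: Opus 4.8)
The plan is to reduce the general case to the case already handled in Proposition~\ref{mapsfromTV}, where $R$ is a free (tensor) algebra. First I would choose a presentation of the $A$-algebra $R$: pick an $A$-module $V$ together with a surjection $T_A(V)\ra R$ of $A$-algebras, and let $J\subseteq T_A(V)$ be the two-sided ideal that is its kernel. Then an $A$-algebra homomorphism $R\ra E\bigotimes_AA'$ is the same thing as an $A$-algebra homomorphism $u\colon T_A(V)\ra E\bigotimes_AA'$ whose kernel contains $J$; equivalently, $u$ factors through $T_A(V)\ra R$. This is precisely the sort of condition treated in Corollary~\ref{closed conditions}(1), applied with the two-sided ideal $J$.

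Concretely, by Proposition~\ref{mapsfromTV} the functor $\NCWeil{A}{T_A(V)}{E}$ is represented by the commutative $A$-algebra $B:=A[V\bigotimes_A\dual{E}]$, and over $B$ there is a universal $A$-algebra homomorphism $\xi\colon T_A(V)\ra E\bigotimes_AB$. Now $E\bigotimes_AB$ is finitely generated and projective as a $B$-module (base change of a finitely generated projective module), so Corollary~\ref{closed conditions}(1) applies to $\xi$ and the two-sided ideal $J\bigotimes_A B$ of $T_A(V)\bigotimes_A B = T_B(V\bigotimes_AB)$: the condition that $\xi$ factor through $(T_A(V)/J)\bigotimes_AB = R\bigotimes_A B$ is a closed condition on $B$, hence represented by a quotient $B\ra B/\mathfrak{a}$. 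I claim $B/\mathfrak{a}$ represents $\NCWeil{A}{R}{E}$: for any commutative $A$-algebra $A'$, an element of $\NCWeil{A}{R}{E}(A')$ is an $A$-algebra map $R\ra E\bigotimes_AA'$, which pulls back to an $A$-algebra map $T_A(V)\ra E\bigotimes_AA'$ killing $J$, i.e. a $B$-point of $\operatorname{Spec}(B)$ lying in the closed subscheme cut out by $\mathfrak{a}$, i.e. a map $B/\mathfrak{a}\ra A'$; conversely every such map yields an $A$-algebra homomorphism out of $R$ by the universal property of the quotient $T_A(V)\ra R$. These two assignments are mutually inverse and natural in $A'$.

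The main point to be careful about is the compatibility between ``factoring through $R$'' and the closed condition of Corollary~\ref{closed conditions}: one must check that, for a fixed $A$-algebra $A'$, an $A$-algebra map $u\colon T_A(V)\ra E\bigotimes_A A'$ factors through $R$ if and only if it factors through $R\bigotimes_A A'$, which is immediate because $T_A(V)\ra R\ra R\bigotimes_AA'$ and $T_A(V)\ra T_A(V)\bigotimes_AA'=T_{A'}(V\bigotimes_AA')\ra E\bigotimes_AA'$ agree, and $R\bigotimes_AA'$ is the pushout of $R\leftarrow T_A(V)\ra T_A(V)\bigotimes_AA'$. I do not expect any genuine obstacle here; the only mild care needed is to note that the ideal $J\bigotimes_AB$ is indeed a two-sided ideal of $T_B(V\bigotimes_AB)$ and that $\xi$ factoring through $R\bigotimes_A B$ is exactly ``$\xi$ kills $J\bigotimes_A B$'', so that Corollary~\ref{closed conditions}(1) is literally applicable. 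Everything else is a formal manipulation of universal properties of tensor algebras, quotients, and base change of projective modules.
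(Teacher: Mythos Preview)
Your proof is correct and follows essentially the same route as the paper: present $R$ as $T_A(V)/J$, use Proposition~\ref{mapsfromTV} to represent $\NCWeil{A}{T_A(V)}{E}$ by $H=A[V\bigotimes_A\dual{E}]$, and then apply Corollary~\ref{closed conditions}(1) over $H$ to cut out the closed locus where the universal map kills $J$. The only cosmetic difference is that the paper phrases the universal map as an $H$-algebra homomorphism $T_A(V)\bigotimes_AH\ra E\bigotimes_AH$ rather than an $A$-algebra map $T_A(V)\ra E\bigotimes_AH$, but these are equivalent by adjunction and your extra bookkeeping about $J\bigotimes_AB$ being a two-sided ideal is fine since $B$ is flat over $A$.
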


\begin{proof} Write $R=T_A(V)/I$, for some two-sided ideal
  $I\subseteq T_A(V)$, for some $A$-module $V$. By Proposition
  \ref{mapsfromTV} the functor
  $\NCWeil{A}{T_A(V)}{E}$ is representable by
  $H=A[V\bigotimes_A\dual{E}]$. Let $\mu
  \colon T_A(V)\bigotimes_AH \ra E\bigotimes_AH$ denote the universal
  map. We are interested describing those $A$-algebra maps $H\ra A'$ that factorise via $R$.  The result now follows by applying Corollary \ref{closed
    conditions} (1), to the $H$-algebra $\mu$ (and where the objects in the category are $H$-algebras, and the morphisms are $H$-algebra homomorphisms that also are $A$-linear). \end{proof}

\subsection{Non-commutative Weil restrictions} Let $g\colon A \ra B $ and
$f\colon B \ra R$ be
homomorphisms of commutative rings. Let $c\colon A \ra E$ be
an $A$-algebra, where $E$ is not necessarily commutative, and assume
that we have an $A$-algebra homomorphism $\mu \colon B \ra E$.
Thus we fix the following data
\begin{equation}\label{thesetup} \xymatrix{ A \ar[r]^g \ar[rd]^{c} & B \ar[r]^f \ar[d]^{\mu} & R \\
  & E & }
\end{equation}
where $c=\mu \circ g$. We consider the functor $\NCWeil{B}{R}{E}$,
from the category of commutative $A$-algebras to sets,  that
for any $A$-algebra $A'$ assigns the set of $B$-algebra homomorphisms
$$ \xymatrix{\operatorname{Hom}_{B\text{-alg}}(R, E\bigotimes_AA').}$$
With a  $B$-algebra homomorphism  $\xi \colon R \ra E\bigotimes_AA'$
we mean an $A$-algebra homomorphism, that also is $B$-linear.

\begin{rem} It is important for applications that we have in mind that we do not
  assume that $\mu \colon B \ra E$ is  a $B$-algebra.
\end{rem}

\begin{rem} Let $A'$ be an $A$-algebra, and let $\xi \colon R\ra
E\bigotimes_AA'$ be a $B$-algebra homomorphism. The morphism $\xi$
will factorise as a $B\bigotimes_AA'$-algebra homomorphism $R\bigotimes_AA' \ra
E\bigotimes_AA'$, which is the identity on $A'$. Therefore the set set
of $A'$-valued points of $\NCWeil{B}{R}{E}$ is the set
$$\xymatrix{ \mathrm{Hom}_{B\bigotimes_AA'\text{-alg}}(R\bigotimes_AA',E\bigotimes_AA').}$$ 

\end{rem}


\begin{rem} When $E$ is commutative, and equal to $E=B$, then the
  functor $\NCWeil{B}{R}{B}$ is by definition the Weil restriction $\Weil{R}{B}$,
  see e.g. \cite{BLR}.
\end{rem}

\begin{prop} Let $g\colon A\ra B$ and $h \colon A \ra D$ be
  homomorphism of commutative rings, let $\mu \colon B \ra E$ be an
  $A$-algebra homomorphism, where $E$ is finitely
  generated and projective as an $A$-module. Then
  $\NCWeil{B}{B\bigotimes_AD}{E}$ is representable.
\end{prop}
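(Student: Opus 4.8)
The plan is to reduce the statement to the already-established Corollary~\ref{AmapsfromR} together with the closed-condition results. We are given $g\colon A\ra B$, $h\colon A\ra D$, and $\mu\colon B\ra E$ with $E$ finitely generated and projective as an $A$-module, and we must represent $\NCWeil{B}{B\bigotimes_AD}{E}$. First I would observe that an $A'$-valued point of this functor is a $B$-algebra homomorphism $R\ra E\bigotimes_AA'$ where $R=B\bigotimes_AD$. By the universal property of the tensor product $B\bigotimes_AD$, giving an $A$-algebra homomorphism out of $B\bigotimes_AD$ is the same as giving a pair of $A$-algebra homomorphisms, one from $B$ and one from $D$, whose images commute in $E\bigotimes_AA'$. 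Requiring the homomorphism to be $B$-linear forces the component coming from $B$ to be exactly the fixed map $\mu\otimes 1\colon B\ra E\bigotimes_AA'$. Hence the functor is naturally identified with the subfunctor of $\NCWeil{A}{D}{E}$ consisting of those $A$-algebra homomorphisms $\xi\colon D\ra E\bigotimes_AA'$ whose image commutes with the image of $\mu\otimes 1$.

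Next I would invoke Corollary~\ref{AmapsfromR}: since $D$ is an $A$-algebra and $E$ is finitely generated and projective over $A$, the functor $\NCWeil{A}{D}{E}$ is represented by some $A$-algebra $H$, with universal homomorphism $\mu_D\colon D\bigotimes_AH\ra E\bigotimes_AH$. Now I would apply Proposition~\ref{representing tensor product} over the base ring $H$: we have the two $H$-algebra homomorphisms, namely $\mu_D$ itself and the constant map $B\bigotimes_AH\ra E\bigotimes_AH$ given by $\mu\otimes 1$, both landing in the $H$-algebra $E\bigotimes_AH$, which is finitely generated and projective as an $H$-module (being the base change of a finitely generated projective $A$-module). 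By that proposition the condition that these two homomorphisms have commuting images is a closed condition on $H$, represented by a quotient $H/\mathrm{Fitt}(I)$ where $I$ is the $H$-submodule of $E\bigotimes_AH$ generated by the commutators $\mu_D(d)\cdot(\mu(b)\otimes 1)-(\mu(b)\otimes 1)\cdot\mu_D(d)$. Then $H/\mathrm{Fitt}(I)$, viewed as an $A$-algebra, represents $\NCWeil{B}{B\bigotimes_AD}{E}$.

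The routine but necessary verification is that this quotient really does represent the functor: given an $A$-algebra $A'$, an $A'$-point of the quotient is an $A$-algebra map $H\ra A'$ that kills $\mathrm{Fitt}(I)$; by the universal property of $H$ this is an $A$-algebra homomorphism $D\ra E\bigotimes_AA'$, and by Lemma~\ref{fitting} and Proposition~\ref{representing tensor product} killing $\mathrm{Fitt}(I)$ is exactly the condition that its image commutes with the image of $\mu\otimes 1$; assembling these via the tensor-product universal property yields the desired $B$-algebra homomorphism $B\bigotimes_AD\ra E\bigotimes_AA'$, and conversely. I expect the main subtlety to be bookkeeping the base change: one must check that forming $H$ and then the commutator-closed-condition quotient is compatible with arbitrary base change $A\ra A'$, i.e. that the Fitting ideal construction commutes with $\otimes_AA'$, which it does by the remark that Fitting ideals commute with base change used repeatedly in Section~1. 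No genuinely new idea is required beyond correctly chaining Corollary~\ref{AmapsfromR} and Proposition~\ref{representing tensor product}.
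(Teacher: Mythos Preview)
Your proposal is correct and follows essentially the same route as the paper: represent $\NCWeil{A}{D}{E}$ by an algebra $H$ via Corollary~\ref{AmapsfromR}, then impose the closed condition from Proposition~\ref{representing tensor product} (applied over $H$) that the universal map $D\to E\otimes_A H$ commute with $\mu\otimes 1$, obtaining a quotient $H/I$ that represents $\NCWeil{B}{B\otimes_A D}{E}$. Your added explanation of why a $B$-linear map out of $B\otimes_A D$ amounts to fixing the $B$-component as $\mu\otimes 1$ and letting the $D$-component vary subject to commutation is exactly the point the paper leaves implicit.
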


\begin{proof} By Lemma \ref{AmapsfromR} the functor $\NCWeil{A}{D}{E}$
  is representable. Let $H$ be the representing object, and let
  $u\colon D\bigotimes_AH \ra E\bigotimes_AH$ be the universal
  map. Let $\mu\otimes 1 \colon B\bigotimes_AH \ra E\bigotimes_AH$ denote the induced map we get from the fixed
$A$-algebra homomorphism $\mu \colon B \ra  E$. By Proposition
\ref{representing tensor product} there is a quotient algebra $H \ra H/I$
representing the closed condition where  $\mu\otimes 1$ and $u$ commute. We have that the restrictions of the two maps to $H/I$  factorise as
$$\xymatrix{ \mu \otimes u \colon B\bigotimes_AD \bigotimes_AH/I \ra E\bigotimes_AH/I}.$$
It follows that $H/I$ represents $\NCWeil{B}{B\bigotimes_AD}{E}$.
 \end{proof}

\begin{cor}\label{NCWeil is rep} Let $A\ra B \ra R$ be homomorphism of commutative rings,
  and let $\mu \colon B \ra E$ be an $A$-algebra homomorphism, where
  the $A$-algebra $E$ is not necessarily commutative, but is finitely
  generated and projective as an $A$-module. Then $\NCWeil{B}{R}{E}$
  is representable.
\end{cor}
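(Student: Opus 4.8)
The plan is to realise $\NCWeil{B}{R}{E}$ as a closed subfunctor of a functor already shown representable, and then to invoke the Fitting-ideal formalism of Section~1. First I would pick a presentation of $R$ as a $B$-algebra: write $R=(B\bigotimes_AD)/J$, where $D=A[x_\alpha\mid\alpha\in\calA]$ is a polynomial $A$-algebra on a set $\calA$ indexing a family of $B$-algebra generators of $R$, and $J$ is the kernel of the resulting surjection $B\bigotimes_AD\ra R$. By the preceding Proposition the functor $\NCWeil{B}{B\bigotimes_AD}{E}$ is representable; let $H$ be a representing $A$-algebra and let $\nu\colon (B\bigotimes_AD)\bigotimes_AH\ra E\bigotimes_AH$ be the universal map, a homomorphism of $H$-algebras.

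Next I would note that $E\bigotimes_AH$ is finitely generated and projective as an $H$-module and that $(B\bigotimes_AD)\bigotimes_AH$ is commutative; let $\widetilde{J}\subseteq (B\bigotimes_AD)\bigotimes_AH$ be the ideal generated by the image of $J$. Since $((B\bigotimes_AD)\bigotimes_AH)/\widetilde{J}\cong R\bigotimes_AH$, Corollary~\ref{closed conditions}(1), applied over the base ring $H$, shows that the condition ``$\nu$ factorises through the quotient map $(B\bigotimes_AD)\bigotimes_AH\ra R\bigotimes_AH$'' is a closed condition on $H$; say it is represented by a quotient $A$-algebra $H\ra H/K$.

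Finally I would check that $H/K$ represents $\NCWeil{B}{R}{E}$. For an $A$-algebra $A'$, the representing property of $H$ identifies $\mathrm{Hom}_{A\text{-alg}}(H,A')$ with the set of $B$-algebra homomorphisms $\eta\colon B\bigotimes_AD\ra E\bigotimes_AA'$, a given $\varphi\colon H\ra A'$ corresponding to the base change of $\nu$ along $\varphi$; and by the defining property of $K$ (together with the compatibility of the relevant Fitting ideal with base change, Lemma~\ref{fitting}), such an $\eta$ annihilates $J$ exactly when $\varphi$ factors through $H/K$. Since a $B$-algebra homomorphism $R\ra E\bigotimes_AA'$ is the same thing as a $B$-algebra homomorphism $B\bigotimes_AD\ra E\bigotimes_AA'$ killing $J$, this produces a bijection $\mathrm{Hom}_{A\text{-alg}}(H/K,A')\cong \NCWeil{B}{R}{E}(A')$, natural in $A'$. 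The step demanding the most care is precisely this last identification: one must track that $B$-linearity of $\xi\colon R\ra E\bigotimes_AA'$ is detected by the single closed condition coming from $J$, and that everything is stable under arbitrary base change $A\ra A'$ — but this is exactly the kind of bookkeeping the Fitting-ideal machinery of Section~1 was set up to supply, so I do not expect a genuine obstacle here.

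As a variant that avoids choosing a presentation of $R$: by Corollary~\ref{AmapsfromR} the functor $\NCWeil{A}{R}{E}$ is representable by some $H_0$ with universal map $\mu_{\mathrm{univ}}\colon R\bigotimes_AH_0\ra E\bigotimes_AH_0$, and then $\NCWeil{B}{R}{E}$ is cut out inside it by the requirement that the two $H_0$-algebra homomorphisms $\mu_{\mathrm{univ}}\circ(f\otimes\mathrm{id})$ and $\mu\otimes\mathrm{id}$ from $B\bigotimes_AH_0$ to $E\bigotimes_AH_0$ agree, which is a closed condition on $H_0$ by Corollary~\ref{closed conditions}(2); the representing object is the corresponding quotient of $H_0$.
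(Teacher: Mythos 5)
Your main argument is essentially the paper's own proof: the paper writes $R$ as a quotient of $B\bigotimes_AT_A(V)$ by a two-sided ideal, invokes the preceding proposition to represent $\NCWeil{B}{B\bigotimes_AT_A(V)}{E}$, and then applies Corollary \ref{closed conditions} (1) to the universal map to cut out the factorisation through $R$; your choice of a commutative polynomial algebra $D$ in place of the tensor algebra is only a cosmetic difference (if anything it matches the proposition's stated hypothesis that $D$ be commutative more literally), and the base-change bookkeeping you flag is exactly what the last sentence of Lemma \ref{fitting} supplies, so there is no gap. Your closing variant, on the other hand, is a genuinely different and somewhat leaner factorisation of the argument: instead of presenting $R$ over $B$, you represent $\NCWeil{A}{R}{E}$ by Corollary \ref{AmapsfromR} and then impose $B$-linearity, i.e.\ equality of the two induced $H_0$-algebra maps $B\bigotimes_AH_0 \ra E\bigotimes_AH_0$, as a closed condition via Corollary \ref{closed conditions} (2). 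This bypasses Proposition \ref{representing tensor product} (the commuting-images step) at this stage and requires no choice of a $B$-algebra presentation of $R$; what the paper's route retains in exchange is an explicit presentation-based description of the representing algebra as a quotient of $A[V\bigotimes_A\dual{E}]$ by a Fitting ideal, in the spirit of Corollary \ref{Weil1stversion}, which is convenient for the later explicit computations. Both arguments rest on the same Fitting-ideal machinery and both are correct.
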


\begin{proof} Write $R$ as a quotient $T_A(V)\bigotimes_AB/I$ for some $A$-module
  $V$, and some two-sided ideal $I\subseteq T_A(V)\bigotimes_AB$. By
  the proposition we have that $\NCWeil{B}{B\bigotimes_AT_A(V)}{E}$ is
  representable. Let $H$ denote the representing object, and let
  $u\colon T_A(V)\bigotimes_AB\bigotimes_AH \ra E\bigotimes_AH$ denote
  the universal element. The result then follows by Corollary
  \ref{closed conditions} (1).
\end{proof}

\subsection{A note on Weil restrictions} Let $A\ra B$ be a
homomorphism of commutative rings, and assume that $B$ is finitely
generated and projective as an $A$-module. Let $V$ be an
$A$-module. By Proposition (\ref{mapsfromTV}) we have that
$A[V\bigotimes_A\dual{B}]$ represents $\NCWeil{A}{T_A(V)}{B}$. Let 
$$\xymatrix{u \colon T_A(V) \ra
  B\bigotimes_AA[V\bigotimes_A\dual{B}]}
$$
denote the universal map. We get by extension of scalars an induced
$B$-algebra homomorphism
\begin{equation}\label{universal u2} \xymatrix{u_B \colon B\bigotimes_A T_A(V) \ra
  B\bigotimes_AA[V\bigotimes_A\dual{B}]}.
\end{equation}
For any ideal $I\subseteq B\bigotimes_AT_A(V)$, we let $u_B(I)$ denote
the $A[V\bigotimes_A\dual{B}]$-module generated by the image of $I$.
\begin{cor}\label{Weil1stversion} Let $g\colon A\ra B$ be a
  homomorphism of commutative rings, where $B$ is
  finitely generated and projective as an $A$-module. Let $f\colon B
  \ra R$ be homomorphisms of rings. Write
  $R=B\bigotimes_AT_A(V)/I$ as a quotient of the full tensor algebra,
  where $V$ is some $A$-module. Then the Weil
  restriction $\Weil{B}{R}=\NCWeil{B}{R}{B}$ is representable by  the $A$-algebra  
$$\xymatrix{A[V\bigotimes_A\dual{B}]/\mathrm{Fitt}(u_B(I)) },$$
where $u_B$ is the universal map \ref{universal u2}.
\end{cor}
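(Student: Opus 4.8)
The strategy is to combine Proposition \ref{mapsfromTV}, which handles the case $R = T_A(V)$ (before imposing $B$-linearity), with the closed-condition machinery of Section 1 to cut down to the right quotient algebra. Concretely, I would proceed as follows. First, by Proposition \ref{mapsfromTV} the functor $\NCWeil{A}{T_A(V)}{B}$ is represented by $H := A[V\bigotimes_A\dual{B}]$, with universal map $u \colon T_A(V) \ra B\bigotimes_AH$. I want to describe $\Weil{B}{R} = \NCWeil{B}{R}{B}$, which asks, for an $A$-algebra $A'$, for the $B$-algebra homomorphisms $R \ra B\bigotimes_AA'$; since $R = B\bigotimes_AT_A(V)/I$, such a map is the same as an $A$-algebra homomorphism $T_A(V)\ra B\bigotimes_AA'$ (automatically $B$-linear once we are mapping into a $B$-algebra and extending scalars, as noted in the remarks after \eqref{thesetup}) whose extension of scalars $B\bigotimes_AT_A(V)\ra B\bigotimes_AA'$ kills the ideal $I$.

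Next I would translate "kills $I$" into a closed condition. Given an $A$-algebra homomorphism $\varphi\colon H \ra A'$, corresponding to an $A$-algebra map $T_A(V)\ra B\bigotimes_AA'$, base-changing the universal map gives $u_B\otimes_H A'\colon B\bigotimes_AT_A(V) \ra B\bigotimes_AA'$, and the image of $I$ under this map is exactly the image of the $H$-submodule $u_B(I) \subseteq B\bigotimes_AH$ after tensoring with $A'$ over $H$. So the condition that this map factors through $R$ is precisely that the submodule $u_B(I)\bigotimes_HA'$ of $B\bigotimes_AA'$ is zero. Now $B\bigotimes_AH$ is finitely generated and projective as an $H$-module (since $B$ is finitely generated projective over $A$), so Lemma \ref{fitting}, applied over $H$ with $E = B\bigotimes_AH$ and the submodule $u_B(I)$, tells us that $\varphi$ factors through $H/\mathrm{Fitt}(u_B(I))$ if and only if $u_B(I)\bigotimes_HA' = 0$. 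This gives the desired representing object $A[V\bigotimes_A\dual{B}]/\mathrm{Fitt}(u_B(I))$.

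**Main obstacle.** The routine part is the scalar-extension bookkeeping; the one point that deserves genuine care is the claim that a $B$-algebra homomorphism $R \ra B\bigotimes_AA'$ is \emph{equivalent} to an $A$-algebra homomorphism $T_A(V)\ra B\bigotimes_AA'$ annihilating $I$ — in particular, that no $B$-linearity condition is lost or silently imposed. This follows from the universal property of $B\bigotimes_AT_A(V)$ as the free $B$-algebra on $V$ together with the remark after \eqref{thesetup} (a $B$-algebra homomorphism out of $R$ factors through $R\bigotimes_AA'$ as a $B\bigotimes_AA'$-algebra map), but it should be spelled out so that the identification of functors is clean. A secondary technical point is to make sure that forming the $H$-module $u_B(I)$ generated by the image of $I$ commutes with base change along $H \ra A'$ — i.e., that $(u_B(I))\bigotimes_HA'$ really is the $A'$-submodule of $B\bigotimes_AA'$ generated by the image of $I$; this is immediate since tensoring is right exact and $I$ has a generating set whose images generate $u_B(I)$ by definition. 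Once these identifications are in place, the conclusion is a direct application of Lemma \ref{fitting} over the base ring $H$.
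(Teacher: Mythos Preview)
Your proposal is correct and follows essentially the same route as the paper: invoke Proposition \ref{mapsfromTV} to get $H=A[V\bigotimes_A\dual{B}]$ representing $\NCWeil{A}{T_A(V)}{B}$, observe that because $B$ is commutative the $B$-linearity condition is automatic (so $H$ also represents $\NCWeil{B}{B\bigotimes_AT_A(V)}{B}$), and then cut down by the ideal $I$ using the Fitting-ideal machinery. The only cosmetic difference is that the paper cites Corollary \ref{closed conditions} (1) for the last step while you unpack it directly via Lemma \ref{fitting}; your explicit discussion of the ``main obstacle'' is exactly what the paper's terse phrase ``Since $B$ is commutative'' is hiding.
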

\begin{proof} By Proposition \ref{mapsfromTV} we have that the $A$-algebra
  $A[V\bigotimes_A\dual{B}]$ represents $\NCWeil{A}{T_A(V)}{B}$. Since
  $B$ is commutative, it follows that $A[V\bigotimes_A\dual{B}]$ also represents
  $\NCWeil{B}{T_A(V)}{B}$. Then, finally, the result follows from
  Corollary \ref{closed conditions} (1).
\end{proof}

\begin{rem} The defining properties of the full tensor algebra
  $T_A(V)$ as well as the symmetric quotient $A[V]$ are well-known,
  and can be found in e.g. \cite{Dieudonne} and \cite{Bourbaki_Alg1_3}. The situation with the
  Weil restriction as in Corollary \ref{Weil1stversion}, can be found in
  e.g. \cite[Theorem 7.4]{BLR}. 
\end{rem}


\begin{ex} We will in this example explicitly describe the
  correspondence between maps from $T_A(V)$ to $E$, and the
  representing object $C^E_V=A[V\bigotimes_A
  \dual{E}]$, given by Proposition \ref{mapsfromTV} and its corollary. Assume that the $A$-algebra $E$ is free as an
$A$-module with basis
$e_1, \ldots , e_n$, and let $V$ be a free $A$-module with basis
$\{t_i\}_{i\in \calI}$. For any monomial $f=t_{i_1}\otimes \cdots
\otimes t_{i_p}$ in $T_A(V)$, we consider the element $f^E $ in
$E\bigotimes_AA[V\bigotimes_A\dual{E}]$ given as
\eqbeg\label{fE}
f^E =\big( \sum_{k=1}^n e_k\otimes (t_{i_1}\otimes\dual{e}_k)\big)
\cdots \big(\sum_{k=1}^n e_k\otimes(t_{i_p}\otimes\dual{e}_k)\big),
\eqend
where $\dual{e}_1, \ldots, \dual{e}_n$ is the dual basis of
$\dual{E}$. The element $f^E=u(f)$, where $u$ is the universal map
$$ \xymatrix{ u \colon T_A(V) \ra
  E\bigotimes_AA[V\bigotimes_A\dual{E}].}$$
 Describing the correspondence given by $u$ for
monomials will suffice to describe the correspondence for arbitrary
elements. We have a unique decomposition 
$$f^E=\sum_{k=1}^n e_k\otimes f_k^E,$$
with $f_k^E \in A[V\bigotimes_A\dual{E}]$, for $k=1, \ldots, n$.
If we expand the defining expression \ref{fE} of $f^E$
  we get that 
$$ f^E =\sum_{\substack{1\leq k_i \leq n \\ i=1, \ldots, p}}
 e_{k_1}\cdots e_{k_p}\otimes (t_{i_1}\otimes\dual{e}_{k_1}) \cdots
(t_{i_p}\otimes\dual{e}_{k_p}).$$
Each monomial expression $e_{k_1}\cdots e_{k_p} $ in the free
$A$-module $E$, can be written
$\sum_{j=1}^nm^j(\underline{k})e_j$ for some $m^j(\underline{k}) \in
A$, with $j=1, \ldots, n$, and each ordered tipple $\underline{k}=k_1,
\ldots, k_p$. Therefore we get that
$$ f^E =\sum_{j=1}^n e_j\otimes \big(\sum_{\substack{1\leq k_i\leq n \\ i=1, \ldots ,p}} (t_{i_1}\otimes\dual{e}_{k_1}) \cdots
(t_{i_p}\otimes\dual{e}_{k_p}) \cdot m^j(\underline{k})\big).$$
In particular we have that
$$f^E_j =\sum_{\substack{1\leq k_i\leq n \\ i=1, \ldots ,p}} (t_{i_1}\otimes\dual{e}_{k_1}) \cdots
(t_{i_p}\otimes\dual{e}_{k_p}) \cdot m^j(\underline{k}),
$$
for each $j=1, \ldots, n$. Thus if we have a two-sided ideal in
$T_A(V)$ generated by an element $f$, then $A[V\bigotimes \dual{E}]/(f_1^E,
\ldots, f_n^E)$ represents $\NCWeil{A}{T_A(V)/(f)}{E}$.
\end{ex}

\section{Module restrictions}

In this section we will introduce the module restriction, which is the
main novelty of the article. From now on, the algebras
$A\ra B \ra R$ are all assumed to be commutative.

\subsection{Module structure} Recall that an $A$-module structure on an Abelian group $M$
is to have a ring homomorphism $\rho \colon A \ra
\operatorname{End}_{\bf Z}(M)$. The image of $\rho$ will factorise via
the subring of $A$-linear endomorphisms $\operatorname{End}_A(M)$,
making $\operatorname{End}_A(M)$ an $A$-algebra: The ring
$\operatorname{End}_A(M)$ is unital and associative, and the image of
the map $\mathrm{can} \colon A \ra \operatorname{End}_A(M)$ lies  in the centre.

\subsection{Extension of module structures} Let $g\colon A \ra B$ be a
homomorphism of rings. If $M$ is an $A$-module, then a $B$-module structure on the set $M$,
extending the fixed $A$-module structure, is a $B$-module structure on
$M$ that is compatible with the $A$-module structure. That is, a $B$-module structure on $M$ extending the
$A$-module structure is a ring homomorphism $\mu \colon B\ra
\operatorname{End}_A(M)$ making the commutative diagram
$$\xymatrix{ B  \ar[r]^-{\mu}   &\operatorname{End}_A(M) \\ 
 A\ar[u]^g \ar[ur]^{\operatorname{can}} & }.$$

\begin{rem} If we have an $A$-algebra homomorphism $\mu \colon B \ra
  \operatorname{End}_A(M)$, then the map will factorise via
  $\operatorname{End}_B(M)$. In particular we have that
  $\operatorname{End}_B(M)$ is a $B$-algebra, but in general
  $\operatorname{End}_A(M)$ is not a $B$-algebra.
\end{rem}

\begin{defn} Let $g\colon A\ra B$ and $f\colon B \ra R$ be commutative
  algebras and homomorphisms, and let
   $M$ be a $B$-module. We
define the functor $\Weilmodule{B}{R}{M}$, from the category of
$A$-algebras to sets,  by assigning to each
$A$-algebra $A'$ the set
$$ \Weilmodule{B}{R}{M}(A') =\left\{ \begin{array}{l} R\text{-module structures
on $M\bigotimes_AA'$,  extending}\\ \text{the fixed $B$-module
structure on $M\bigotimes_AA'$.} \end{array} \right\} $$
We call this functor the module restriction.
\end{defn}

\begin{thm}\label{Weilmodule is rep} Let $g\colon A \ra B$ and
  $f\colon B \ra R$ be homomorphism of commutative rings, and let $M$
  be a $B$-module. Assume that $M$, considered as an $A$-module, is
  projective and finitely generated. Then the functor
  $\Weilmodule{B}{R}{M}$ is naturally identified with $\NCWeil{B}{R}{E}$, where
  $E=\operatorname{End}_A(M)$, and in particular the functor $\Weilmodule{B}{R}{M}$ is
  representable. 
\end{thm}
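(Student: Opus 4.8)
The plan is to establish a natural bijection, functorial in the $A$-algebra $A'$, between the set of $R$-module structures on $M\otimes_AA'$ extending the fixed $B$-module structure, and the set $\hom_{B\text{-alg}}(R, E\otimes_AA')$ where $E=\End_A(M)$. Once this identification is in place, representability follows immediately from Corollary~\ref{NCWeil is rep}, since $A\ra B\ra R$ are commutative rings and $E=\End_A(M)$ is, by the discussion in the subsection on module structures, an $A$-algebra which moreover is finitely generated and projective as an $A$-module because $M$ is (indeed $E=\End_A(M)\cong M\otimes_A\dual{M}$ when $M$ is finitely generated projective).

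First I would unwind the definitions on both sides. Giving an $R$-module structure on the abelian group $N:=M\otimes_AA'$ is the same as giving a ring homomorphism $R\ra\End_{\bf Z}(N)$; asking that it extend the $A'$-module structure (equivalently the fixed $B$-module structure pulled back to $A'$) forces the image to land in $\End_{A'}(N)$ and to be compatible with the structure map. So the data is precisely an $A'$-algebra homomorphism $R\otimes_AA' \ra \End_{A'}(N)$ that restricts to the given map on $B\otimes_AA'$; by the Remark following the setup \eqref{thesetup}, this is the same as a $B$-algebra homomorphism $R\ra \End_{A'}(N)$ (an $A$-algebra map that is $B$-linear). Next I would use the canonical isomorphism $\End_A(M)\otimes_AA' \xrightarrow{\ \sim\ } \End_{A'}(M\otimes_AA')$, which is valid precisely because $M$ is finitely generated and projective over $A$ (this is the analogue of \cite[4.3.~Proposition~7]{Bourbaki_Alg1_3} used already in the proof of Proposition~\ref{mapsfromTV}). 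Under this isomorphism the fixed $B$-module structure $\mu\colon B\ra\End_A(M)=E$ becomes exactly the distinguished map $B\ra E\otimes_AA'$ appearing in the definition of $\NCWeil{B}{R}{E}$, so a $B$-algebra homomorphism $R\ra\End_{A'}(M\otimes_AA')$ is the same thing as a $B$-algebra homomorphism $R\ra E\otimes_AA'$. This exhibits the claimed bijection on $A'$-points.

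Then I would check naturality: for a map of $A$-algebras $A'\ra A''$, both the base-change of module structures and the base-change on $\hom_{B\text{-alg}}(R,E\otimes_AA'-)$ are induced by $-\otimes_{A'}A''$, and the comparison isomorphism $E\otimes_AA'\ra\End_{A'}(M\otimes_AA')$ is itself natural in $A'$, so the bijections assemble into an isomorphism of functors $\Weilmodule{B}{R}{M}\cong\NCWeil{B}{R}{E}$. Applying Corollary~\ref{NCWeil is rep} with this $E$ — legitimate since $E$ is an $A$-algebra, not necessarily commutative, but finitely generated and projective as an $A$-module — gives representability.

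The main obstacle, or at least the point demanding the most care, is the identification step: one must be sure that \emph{every} $A'$-point of $\Weilmodule{B}{R}{M}$ really does correspond to an $A$-algebra map that is $B$-linear, i.e.\ that extending the $A$-module structure is equivalent to extending the given $B$-module structure after base change, and that the $\bf Z$-algebra endomorphism ring gets replaced by $\End_{A'}$ without loss. This is where one uses that the $A$-module structure on $M\otimes_AA'$ factors through the $B$-action and that $A\ra A'$ is an $A$-algebra map, together with the centrality of the structure map into $\End_A(M)$. None of this is deep, but it is the conceptual heart of the theorem; the rest is the formal consequence of the earlier corollaries and the finitely-generated-projective hypothesis, which is exactly what makes $E=\End_A(M)$ a well-behaved coefficient algebra and makes the comparison $E\otimes_AA'\cong\End_{A'}(M\otimes_AA')$ hold.
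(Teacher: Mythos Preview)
Your proof is correct and follows essentially the same approach as the paper: identify an $R$-module structure on $M\otimes_AA'$ extending the $B$-structure with a $B$-linear $A$-algebra homomorphism $R\to\End_{A'}(M\otimes_AA')$, invoke the canonical isomorphism $\End_A(M)\otimes_AA'\cong\End_{A'}(M\otimes_AA')$ for finitely generated projective $M$, and deduce representability of $\NCWeil{B}{R}{E}$. Your citation of Corollary~\ref{NCWeil is rep} is the appropriate one (the paper's closing reference to Corollary~\ref{Weil1stversion} appears to be a slip).
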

\begin{proof}  Since $M$ is a $B$-module, we have that $M$ is also an
$A$-module. In particular we have that the $B$-module structure on $M$
extends the $A$-module structure. Thus, the $B$-module structure on
$M$ is given by an $A$-algebra homomorphism 
$ \mu \colon B \ra \operatorname{End}_A(M)$.
Let $A'$ be an $A$-algebra, and let $\xi$ be an
$A'$-valued point of the  module restriction
$\Weilmodule{B}{R}{M}$. Then we have that the $A'$-valued point
$\xi$ is a $A'$-algebra homomorphism making the following commutative diagram 
\begin{equation}\label{Thecommutativediagram}
\xymatrix{ R\bigotimes_AA' \ar[r]^-{\xi} &
  \operatorname{End}_{A'}(M\bigotimes_AA') \\
B\bigotimes_AA' \ar[u]^{f\otimes \id} \ar[r]^-{\mu\otimes \id}
&\operatorname{End}_A(M)\bigotimes_AA' \ar[u]_{\nu}, }
\end{equation}
where $\nu$ is the canonical map. Since $M$ is finitely generated and projective, the map
$\nu$ is an isomorphism
(\cite[4.3. Proposition 7]{Bourbaki_Alg1_3}).  The commutativity then
means that the $A'$-algebra homomorphism $\xi$ is $B$-linear. In other words, we  have a natural identification of functors
$$\Weilmodule{B}{R}{M} =\NCWeil{B}{R}{E},$$
with $E=\operatorname{End}_A(M)$. As $M$ is projective and finitely
generated, so is $E$, and the statement about representability
follows from Corollary \ref{Weil1stversion}.
\end{proof}

\subsection{A result about gluing} For any element $x$ in a ring $R$ we let
  $R_x=R[T]/(Tx-1)$ denote the localisation of $R$ at $x$.

\begin{lemma}\label{glue} Let $E$ be an $A$-algebra which is finitely
  generated and projective as an $A$-module, and let $A\ra B \ra R$
  be homomorphism of commutative rings. For any element $x$ in $R$ the scheme
  $\Spec(\NCWeil{B}{R_x}{E})$ is an open subscheme of
  $\Spec(\NCWeil{B}{R}{E})$. Moreover, if $y$ is another element of
  $R$, then we have 
$$ \Spec(\NCWeil{B}{R_{xy}}{E})=\Spec(\NCWeil{B}{R_x}{E})\cap
\Spec(\NCWeil{B}{R_y}{E}).$$
\end{lemma}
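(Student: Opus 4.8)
The plan is to reduce the localisation statement about the non-commutative Weil restriction to the corresponding, well-known statement about ordinary localisations of rings, using the functorial description of $\NCWeil{B}{R}{E}$ and Yoneda's lemma. First I would recall from the Remark following the definition that, for an $A$-algebra $A'$, the $A'$-valued points of $\NCWeil{B}{R}{E}$ are the $B\bigotimes_AA'$-algebra homomorphisms $R\bigotimes_AA'\ra E\bigotimes_AA'$, equivalently (since a $B$-linear $A'$-algebra map out of $R\bigotimes_AA'$ is the same as a $B$-linear $A'$-algebra map out of $R$) the $B$-algebra homomorphisms $\xi\colon R\ra E\bigotimes_AA'$. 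The key elementary observation is: such a $\xi$ factors through the localisation $R_x=R[T]/(Tx-1)$ if and only if $\xi(x)$ is a unit in $E\bigotimes_AA'$; and when it does, the extension to $R_x$ is unique. Thus $\NCWeil{B}{R_x}{E}$ is the subfunctor of $\NCWeil{B}{R}{E}$ cut out by the condition ``$\xi(x)$ is invertible''.

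The main step is then to show that this subfunctor is represented by a \emph{localisation} of the representing $A$-algebra $H:=\NCWeil{B}{R}{E}$ at a single element, hence by an open subscheme of $\Spec(H)$. Let $u\colon R\ra E\bigotimes_AH$ be the universal $B$-algebra homomorphism. Consider the element $u(x)\in E\bigotimes_AH$. Since $E$ is finitely generated and projective as an $A$-module, $E\bigotimes_AH$ is finitely generated and projective as an $H$-module, and for such an $H$-algebra the locus in $\Spec(H)$ over which a fixed element becomes invertible is open: concretely, multiplication by $u(x)$ is an $H$-linear endomorphism of the finite projective $H$-module $E\bigotimes_AH$, and the locus where it is an isomorphism is the open set where its ``determinant'' (computed locally, or via the zeroth Fitting ideal of the cokernel) is a unit. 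Let $g\in H$ be such that $\Spec(H_g)$ is exactly this locus — more precisely, I would argue that $H_g$ represents the subfunctor: for $\varphi\colon H\ra A'$ corresponding to $\xi=\mu\circ\varphi$ (abusing notation), $\xi(x)=(\id\otimes\varphi)(u(x))$ is invertible in $E\bigotimes_AA'$ iff $\varphi$ factors through $H_g$, because invertibility of an endomorphism of a finite projective module is detected by invertibility of the image of that determinant-type element under base change. Then by Yoneda $\NCWeil{B}{R_x}{E}\cong \Spec(H_g)$ as an open subscheme of $\Spec(H)=\Spec(\NCWeil{B}{R}{E})$.

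For the second assertion, I would use that $R_{xy}$ is canonically $(R_x)_y=(R_y)_x$, so a $B$-algebra homomorphism $\xi\colon R\ra E\bigotimes_AA'$ factors through $R_{xy}$ iff both $\xi(x)$ and $\xi(y)$ are invertible in $E\bigotimes_AA'$, i.e. iff $\xi$ lies in both subfunctors $\NCWeil{B}{R_x}{E}$ and $\NCWeil{B}{R_y}{E}$. On representing objects this says $\Spec(\NCWeil{B}{R_{xy}}{E})$ is the locus in $\Spec(H)$ where both $u(x)$ and $u(y)$ are invertible, which is the intersection of the two open subschemes; this is immediate once the first part is set up functorially, since an intersection of open subschemes corresponds to the intersection of the subfunctors they represent.

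The step I expect to be the main obstacle is pinning down the element $g\in H$ and verifying cleanly that $H_g$ \emph{represents} the ``$\xi(x)$ invertible'' subfunctor, rather than merely having the right points. The subtlety is that invertibility of $u(x)$ must be expressed by a condition on $H$ that is stable under arbitrary base change $H\ra A'$; the honest way is to pass to a local (or affine) trivialisation of $E$, where multiplication by $u(x)$ becomes an $n\times n$ matrix over $H$ and $g=\det$ of that matrix, then check the global construction is independent of the trivialisation (it is, up to a unit, since the determinant of a change-of-basis matrix is a unit) and glues — exactly the kind of patching over $\Spec(A)$ used already in Section 1 for Fitting ideals. Alternatively one invokes the zeroth Fitting ideal of the cokernel of multiplication by $u(x)$ and the characterisation in Lemma \ref{fitting}-style arguments; either way the bookkeeping, not the idea, is the work.
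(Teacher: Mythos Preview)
Your approach is essentially identical to the paper's: take the universal map $\xi\colon R\ra E\bigotimes_AH$, observe that factoring through $R_x$ is equivalent to $\xi(x)$ being a unit in $E\bigotimes_AH$, and detect this by the determinant $d(x)=\det(e\mapsto e\cdot\xi(x))\in H$, so that $H_{d(x)}$ represents $\NCWeil{B}{R_x}{E}$. You are in fact more careful than the paper about the well-definedness of the determinant for merely projective $E$ and about checking that $H_{d(x)}$ genuinely represents the subfunctor rather than just having the right closed points; the paper simply asserts these steps and leaves the intersection statement implicit.
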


\begin{proof} Let $H$ denote the $A$-algebra that represents
  $\NCWeil{B}{R}{E}$, and let $\xi \colon R \ra E\bigotimes_AH$ denote the
  universal map. Let $x$ be an element of $R$. Then the universal map
  $\xi \colon R \ra E\bigotimes_AH$ factorises via $R\ra R_x$ if and
  only if $\xi(x)$ is a unit in $E\bigotimes_AH$. The element $\xi(x)$
  in the finitely generated and projective $H$-module $E\bigotimes_AH$ is a unit if and only if $d(x) =\det (e\mapsto e\cdot \xi(x))$ in
  $H$ is invertible. It follows that $H_{d(x)}$ represents
  $\NCWeil{B}{R_x}{E}$.
\end{proof}

\section{Module restriction in a geometric context}

In this section we will set our result about module restrictions in a
geometric context. 

\subsection{Relative rank and finite support} Let $\calE$ be
quasi-coherent sheaf of modules on an algebraic space $X$. The support
of $\calE$ is the closed subspace $\mathrm{Supp}(\calE)$ of $X$
determined by the
annihilator $\mathrm{ann}(\calE)$.


\begin{defn} Let  $g\colon X\ra S$ be a morphism of algebraic spaces, and $\calE$
  a quasi-coherent sheaf on $X$. We say that $\calE$ is {\em finite, flat
    of relative rank } $n$ over $S$, if $\calE$ is flat over $S$, and $\mathrm{Supp}(\calE)$ is finite over $S$, and the locally free
  ${\calO}_S$-module $g_*{\calE}$ has constant rank $n$. 
\end{defn}

\subsection{} If the quasi-coherent ${\calO}_X$-module $\calE$ is
finite, flat of rank $n$ over the base $S$, then it follows that
${\calE}$ is coherent ${\calO}_X$-module. In particular the underlying
set $|\mathrm{Supp}(\calE)| $ of the support, is precisely the
set of points where the sheaf ${\calE}$ is non-zero.
 
\subsection{} We denote by
${\mathscr C}oh_{X/S}^n$ the stack (\cite{LaumonMB} and \cite{Lieblich})  of quasi-coherent sheaves on
$X$, that are finite, flat and of relative rank $n$ over $S$.  A
morphism between two objects
$\calE$ and $\calF$ in ${\mathscr C}oh_{X/S}(T)$,
where $T\ra S$ is a scheme over $S$, 
 is an ${\calO}_{X\times_ST}$-module isomorphism $\varphi \colon
{\calE}\ra {\calF}$. If $T\ra S$ is a morphism  then
we have an isomorphism of stacks
\eqbeg\label{base change for stacks}{\mathscr C}oh^n_{X\times_ST/T} \simeq
{\mathscr C}oh^n_{X/S}\times_ST.
\eqend

\begin{lemma}\label{push forward map} Let $X\ra S$ be a separated map of algebraic spaces, and
  let $f\colon Y \ra X$ be a morphism of $S$-spaces. Then the
  push-forward gives a map $f_*\colon {\mathscr C}oh_{Y/S}^n \ra
  {\mathscr C}oh_{X/S}^n$. 
\end{lemma}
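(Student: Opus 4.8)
The plan is to unwind the definition of the stacks and verify that $f_*$ respects the three defining conditions; the real content is a single permanence property of finite morphisms, and everything else is formal. On objects over an $S$-scheme $T$ the map sends $\calF$ to $f_{T*}\calF$, where $f_T\colon Y_T\ra X_T$ is the base change of $f$ and $X_T\ra T$ is again separated. To check that $f_{T*}\calF$ lies in $\mathscr{C}oh^n_{X_T/T}(T)$ and that the construction is functorial we may, since all the conditions in sight are \'etale-local on the base, assume $T=\Spec(A)$ is affine; so fix a separated $X\ra S$ with $S=\Spec(A)$, a morphism $f\colon Y\ra X$, and $\calF\in\Coh{Y}(S)$. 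Recall from the discussion above that such an $\calF$ is automatically coherent, with support $Z=\mathrm{Supp}(\calF)\subseteq Y$ a closed subspace finite over $S$, and $\calF=i_*(\calF|_Z)$ for the closed immersion $i\colon Z\hookrightarrow Y$.

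The key step is that the composite $f\circ i\colon Z\ra X$ is finite. Indeed $Z\ra S$ is finite, hence proper, and $X\ra S$ is separated, so by the cancellation property of proper morphisms $f\circ i$ is proper; its fibres are contained in the finite fibres of $Z\ra S$, so $f\circ i$ is quasi-finite; and a quasi-finite proper morphism is finite. In particular $f\circ i$ is affine, so $f_*\calF=(f\circ i)_*(\calF|_Z)$ is quasi-coherent, and it is coherent as the pushforward of a coherent sheaf along a finite morphism. Its support is contained in the scheme-theoretic image $W\subseteq X$ of $f\circ i$; since $Z\ra W$ is finite and surjective, $W\ra S$ is separated (being a closed subspace of $X$), of finite type (descent along the finite surjection $Z\ra W$), and universally closed (closed subsets push forward along the surjection $Z\ra W$, using that $Z\ra S$ is universally closed), hence proper, and it is quasi-finite since its fibres are images of the finite fibres of $Z\ra S$; therefore $W\ra S$ is finite, and so is $\mathrm{Supp}(f_*\calF)\ra S$.

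For flatness and rank, note that over the affine base $S$ the underlying $\calO_S$-module of $f_*\calF$ equals $(X\ra S)_*f_*\calF=(Y\ra S)_*\calF$, i.e.\ it coincides with the underlying $\calO_S$-module of $\calF$: pushing forward along $Y\ra X$ does not change it. By hypothesis that module is locally free of rank $n$, and since $f_*\calF$ is a coherent sheaf with support finite over the affine base $S$, flatness of $f_*\calF$ over $S$ is equivalent to flatness of this module; hence $f_*\calF$ is flat over $S$, and $g_*(f_*\calF)=(g\circ f)_*\calF$ is locally free of rank $n$ on $S$, where $g\colon X\ra S$ is the structure map. Together with the previous paragraph this gives $f_*\calF\in\Coh{X}(S)$. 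Finally $f_*$ is manifestly functorial in isomorphisms, and for $T'\ra T$ the canonical comparison $f_{T'*}(\calF_{T'})\cong (f_{T*}\calF)_{T'}$ holds — after the reduction above this is just compatibility of restriction of scalars with base change of rings — with formal cocycle identities, so the construction assembles into a $1$-morphism of stacks $f_*\colon\Coh{Y}\ra\Coh{X}$.

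The main obstacle is the finiteness of $f\circ i\colon Z\ra X$ in the second step: this is precisely the point at which separatedness of $X\ra S$ is used, through the cancellation property. Without that hypothesis the composite $Z\ra X$ need not even be separated, $f_*\calF$ need not be quasi-coherent on $X$, and the statement fails; the remaining verifications are routine permanence facts about finite morphisms, coherence, and base change.
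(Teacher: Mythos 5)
Your argument is correct and follows the paper's proof essentially verbatim: in both, the key step is the cancellation property (the support $Z$ of $\calF$ is finite over $T$ and $X\times_ST\ra T$ is separated, hence $Z\ra X\times_ST$ is finite), from which quasi-coherence of $f_{T*}\calF$, finiteness of its support as the image of $Z$, and flatness together with the rank condition via $(g_T)_*(f_T)_*\calF=(g_T\circ f_T)_*\calF$ all follow. Your slightly longer justification that the image support is finite over the base (via the scheme-theoretic image of $Z$) is exactly the point the paper disposes of with ``proper and quasi-finite, hence finite,'' so the two proofs coincide in substance.
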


\begin{proof} Let $T\ra S$ be a morphism with $T$ a scheme, and let
  $\calE$ be an element of
  ${\mathscr C}oh^n_{Y\times_ST/T}$. Since the support
  $Z=\operatorname{Supp}(\calE)$ by assumption is finite over $T$, and
  the map $f\colon Y \ra X$ is separated, we have that the composition
  $f_T \colon Z\subseteq Y\times_ST \ra X\times_ST$ is finite
  \cite[Proposition 6.15]{EGAII}. In particular we have that
  ${f_{T}}_*\calE=\calE_X$ is quasi-coherent. The support of $\calE_X$ is the
  image of the support of $\calE$. Thus the support $\calE_X$ is
  proper and quasi-finite over $T$, hence finite. If $g_T \colon
  X\times_ST \ra T$ is the projection map, we have by definition that
  $g_T\circ f_T$ is the projection map from $Y\times_ST$. It follows
  that $f_{T*}{\calE}$ is finite, flat of relative rank $n$ over $T$. 
\end{proof}  

\begin{rem} Since the support of an element $\calE $ in ${\mathscr
C}oh^n_{X/S}(T)$ is finite, and in particular proper over the base $T$, it
follows that 
$$ {\mathscr C}oh^n_{U/S} \subseteq {\mathscr C}oh^n_{X/S}$$
is an open substack for an open subspace $U\subseteq X$ (see e.g. \cite{FGAIV}).
\end{rem}

\begin{thm}\label{affine push forward} Let $f\colon Y \ra X$ and
  $g\colon X \ra S$ be  morphisms of affine schemes $X,Y$ and
  $S$. Then we have that the push-forward map $f_*\colon
  {\mathscr C}oh^n_{Y/S} \ra {\mathscr C}oh^n_{X/S}$ is
schematically  representable.
\end{thm}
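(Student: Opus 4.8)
Write $S=\Spec(A)$, $X=\Spec(B)$ and $Y=\Spec(R)$, so that we have commutative rings $A\ra B\ra R$. An object of ${\mathscr C}oh^n_{X/S}$ over an $A$-algebra $A'$ is, by the very definition of relative rank $n$, a $B\bigotimes_AA'$-module whose underlying $A'$-module is finitely generated projective of rank $n$; and an isomorphism is simply a $B\bigotimes_AA'$-linear isomorphism. Dually, a point of ${\mathscr C}oh^n_{Y/S}$ over $A'$ is such a module together with an extension of the $B\bigotimes_AA'$-module structure to an $R\bigotimes_AA'$-module structure. The plan is thus to reduce the statement ``$f_*$ is schematically representable'' to the representability results of Section~3, and in particular to Theorem~\ref{Weilmodule is rep}.

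**Trivialising the target.**
First I would exploit that ${\mathscr C}oh^n_{X/S}$ is itself an Artin stack (indeed an algebraic space is not expected; it is a stack), and that a finitely generated projective $A'$-module of rank $n$ is, fppf-locally on $\Spec(A')$, free of rank $n$. So I fix a presentation: an object $\calE$ of ${\mathscr C}oh^n_{X/S}(A')$ together with a choice of $A'$-basis of $M:=\calE$ identifies $\calE$ with an $A$-algebra homomorphism $\mu_{A'}\colon B\ra \operatorname{Mat}_n(A')=\operatorname{End}_{A'}((A')^n)$. Base-changing $S$ along the atlas of ${\mathscr C}oh^n_{X/S}$ (or just working with the presentation stack), we are reduced to the following: given $S'\ra {\mathscr C}oh^n_{X/S}$ classifying a fixed module $M$ over $B\bigotimes_A{\calO}_{S'}$ which is locally free of rank $n$ over ${\calO}_{S'}$, we must show that the fibre product ${\mathscr C}oh^n_{Y/S}\times_{{\mathscr C}oh^n_{X/S}}S'$ is a scheme affine over $S'$. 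This fibre product, unwound, is precisely the functor on ${\calO}_{S'}$-algebras sending $A'$ to the set of $R\bigotimes_A A'$-module structures on $M\bigotimes_{{\calO}_{S'}}A'$ extending the fixed $B\bigotimes_AA'$-module structure --- that is, it is the module restriction $\Weilmodule{B\otimes{\calO}_{S'}}{R\otimes{\calO}_{S'}}{M}$.

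**Applying the representability theorem.**
Now $M$ is by hypothesis finitely generated and projective as an ${\calO}_{S'}$-module (rank $n$), so Theorem~\ref{Weilmodule is rep} applies verbatim with $A$ replaced by ${\calO}_{S'}$, $B$ by $B\bigotimes_A{\calO}_{S'}$, $R$ by $R\bigotimes_A{\calO}_{S'}$: the functor $\Weilmodule{B\otimes{\calO}_{S'}}{R\otimes{\calO}_{S'}}{M}$ is representable by an ${\calO}_{S'}$-algebra, i.e. by a scheme affine over $S'$. By fppf descent (the construction in Theorem~\ref{Weilmodule is rep} commutes with base change, since it is built from Fitting ideals and the free-algebra construction of Section~2, both of which do), these affine schemes over the members of an atlas of ${\mathscr C}oh^n_{X/S}$ glue to a scheme affine over ${\mathscr C}oh^n_{X/S}$, which is exactly the assertion that $f_*$ is schematically representable. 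The gluing along localisations of $R$ is handled by Lemma~\ref{glue}, and this is also what allows one to pass from the affine $Y$ to a general affine presentation of $R$ as a quotient of a (possibly infinitely generated) tensor algebra.

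**Expected main obstacle.**
The routine part is the unwinding of the fibre product into the module restriction functor; the genuine point requiring care is the compatibility with base change and descent. One must check that forming $\operatorname{End}_{A'}(M\bigotimes A')$ commutes with further base change --- this is the content of \cite[4.3. Proposition 7]{Bourbaki_Alg1_3} used already in Theorem~\ref{Weilmodule is rep}, valid since $M$ is finitely generated projective --- and that the representing object of $\Weilmodule{}{}{}$ is insensitive to the fppf-local choice of basis of $M$, so that the local affine schemes patch. A secondary subtlety is that $R$ need not be of finite type over $A$, so the presentation $R=B\bigotimes_AT_A(V)/I$ used in Corollary~\ref{NCWeil is rep} involves an $A$-module $V$ that may be infinite; but since $A[V\bigotimes_A\dual{B}]$ and the Fitting-ideal constructions make sense for arbitrary $V$, representability survives, and affineness of $f_*$ is preserved.
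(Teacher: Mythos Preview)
Your proposal is correct in its essentials and rests on the same key ingredient as the paper, namely the identification of the fibre of $f_*$ with a module restriction and the invocation of Theorem~\ref{Weilmodule is rep}. The route you take, however, is more circuitous than the paper's.

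The paper proceeds directly from the definition of schematic representability: for an arbitrary scheme $T$ and an arbitrary morphism $T\ra {\mathscr C}oh^n_{X/S}$, one must show that $T\times_{{\mathscr C}oh^n_{X/S}}{\mathscr C}oh^n_{Y/S}$ is a scheme. Using Lemma~\ref{glue} (Zariski-sheaf property) one reduces to affine $T$, and by the base-change isomorphism \eqref{base change for stacks} one may take $T=S=\Spec(A)$ itself. The $S$-point then corresponds to a $B$-module $M$ that is already finitely generated and projective over $A$, and Theorem~\ref{Weilmodule is rep} applies as stated --- no trivialisation of $M$ to a free module, no choice of basis, no atlas, and no fppf descent are needed. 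Your detour through freeness and descent is not wrong, but it introduces an obstacle (compatibility of the representing object with descent data) that the direct argument simply does not face.

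The one place where you are too quick is the sentence ``This fibre product, unwound, is precisely the functor\ldots''. The fibre product is a priori a $2$-fibre product of groupoids: an $A'$-point is a triple $(s',\calF,\psi)$ with $\psi\colon M\bigotimes_AA'\ra N$ a $B\bigotimes_AA'$-module isomorphism, where $N$ is the $R\bigotimes_AA'$-module underlying $\calF$. The paper spends the bulk of its proof constructing the comparison map $\alpha$ from $\Spec(\Weilmodule{B}{R}{M})$ to this groupoid and verifying that it is both a monomorphism and essentially surjective; the latter uses the isomorphism $\psi$ to transport the $R'$-module structure from $N$ back to $M'$ via the conjugation $\tilde\psi^{-1}\circ\xi'$. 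This is routine once written out, but it is the actual content of the theorem and deserves more than ``unwound''.
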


\begin{proof} We want to see that for arbitrary scheme $T$, the fibre product 
\eqbeg \label{2fiber}
T\times_{{\mathscr C}oh_{X/S}^n}{\mathscr C}oh^n_{Y/S}
\eqend
is representable by a module restriction. By Lemma \ref{glue} the module restrictions are Zariski
sheaves, and we may assume that $T$ is affine. We may, by \ref{base
  change for stacks}, assume  that $T=S$.  Let $\calE$ in
${\mathscr C}oh_{X/S}^n(S)$ be the element corresponding to
a given map $S\ra {\mathscr C}oh_{X/S}^n$. Let $S=\Spec(A)$, $X=\Spec(B)$, and let $M$ be the $B$-module
corresponding to the sheaf $\calE$ on $X$. Then $M$ is
projective and finitely generated as an $A$-module, and we have by Theorem \ref{Weilmodule is rep} the
$A$-algebra $\Weilmodule{B}{R}{M}$ where $Y=\Spec(R)$. We have a
natural map
$$ \xymatrix{\alpha \colon \Spec(\Weilmodule{B}{R}{M}) \ar[r] & 
S\times_{{\mathscr C}oh^n_{X/S}}{\mathscr C}oh^n_{Y/S}}$$
given as follows. Let $u'\colon \Spec(A') \ra
\Spec(\Weilmodule{B}{R}{M})$ be a morphism of affine schemes over
$S$. By the defining properties of the module restriction $\Weilmodule{B}{R}{M}$, the morphism $u'$
corresponds to a $R'=R\bigotimes_AA'$-module structure on
$M'=M\bigotimes_AA'$, extending the fixed $B'=B\bigotimes_AA'$-module
structure. Let $\xi \colon R' \ra
\operatorname{End}_{B'}(M')$ be the $B$-algebra homomorphism
corresponding to the $R'$-module structure on $M'$. And let
$\calF_{Y'}$ denote the corresponding quasi-coherent sheaf on
$Y'=Y\times_SS'$. Then $\alpha(u')=(s', \calF_{Y'}, \id)$, where $s'\colon S'\ra S$
is the structure map. The map $\alpha$ is a monomorphism, and we need
to see that it also is essentially surjective. 

Let $(s',\calF, \psi)$
be a $S'=\Spec(A')$-valued point of the fibre product \ref{2fiber}. Let
$N$ be the $R'=R\bigotimes_AA'$-module corresponding to the sheaf ${\calF}$ on
$Y\times_SS'$. Then $\psi$ corresponds to a $B'=B\bigotimes_AA'$-module isomorphism
$\psi \colon M'=M\bigotimes_AA' \ra N$. We get an induced $B'$-algebra
isomorphism
$$ \xymatrix{\tilde{\psi} \colon \operatorname{End}_{B'}(M') \ar[r] & 
\operatorname{End}_{B'}(N)}.$$
Finally, let $\xi' \colon R' \ra \operatorname{End}_{B'}(N)$ be the
$B'$-algebra homomorphism corresponding to the $R'$-module structure
on $N$.  The composition of
$\xi'$ with $\tilde{\psi}^{-1}$ gives a $B'$-algebra homomorphism $\xi
\colon R' \ra \operatorname{End}_{B'}(M')$. By the defining properties
of the module restriction $\Weilmodule{B}{R}{M}$ there exists a unique
$u'\colon S' \ra \Spec(\Weilmodule{B}{R}{M})$ corresponding to
$\xi$. Thus $\alpha(u')$ is isomorphic to $(s',\calF, \psi)$, and we
have shown that $\alpha$ is essentially surjective.
\end{proof} 
\begin{cor}\label{relatively representable} Let $f\colon X \ra S$ be a
  separated map of an algebraic spaces, and let  $f\colon Y \ra X$ be an
  affine morphism of $S$-spaces. Then the push-forward
  map
$$ \xymatrix{ f_{*}\colon {\mathscr C}oh^n_{Y/S} \ar[r] & 
{\mathscr C}oh^n_{X/S}}$$
is schematically representable. 
\end{cor}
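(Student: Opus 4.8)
The plan is to reduce the global statement of Corollary \ref{relatively representable} to the affine case already settled in Theorem \ref{affine push forward}, by working locally on $X$ and on $S$. The key observation is that the property of being schematically representable is local on the target stack ${\mathscr C}oh^n_{X/S}$, and that this stack is built up from affine pieces in a controlled way. So first I would choose an étale (or Zariski) affine cover $\{S_i\}$ of the base $S$; by the base-change formula \ref{base change for stacks} it suffices to prove schematic representability of each $f_*\colon {\mathscr C}oh^n_{Y\times_SS_i/S_i} \ra {\mathscr C}oh^n_{X\times_SS_i/S_i}$, so we may assume $S=\Spec(A)$ is affine.

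Next I would cover $X$ by affine opens $\{X_j = \Spec(B_j)\}$. Since $f\colon Y\ra X$ is affine, the preimage $Y_j = f^{-1}(X_j)=\Spec(R_j)$ is affine as well, and $f$ restricts to a morphism $f_j\colon Y_j\ra X_j$ of affine schemes over $S$. By Theorem \ref{affine push forward} each $f_{j*}\colon {\mathscr C}oh^n_{Y_j/S}\ra {\mathscr C}oh^n_{X_j/S}$ is schematically representable. The point to exploit now is the remark immediately preceding Theorem \ref{affine push forward}: because the sheaves in question have finite, hence proper, support over the base, ${\mathscr C}oh^n_{X_j/S}\subseteq {\mathscr C}oh^n_{X/S}$ is an open substack, and an element $\calE\in {\mathscr C}oh^n_{X/S}(T)$ lies in ${\mathscr C}oh^n_{X_j/S}(T)$ precisely when its support factors through $X_j\times_ST$. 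Thus, given any $T\ra {\mathscr C}oh^n_{X/S}$ classifying $\calE$ with support $Z\subseteq X\times_ST$, the $Z$ admits an open cover by the $Z\cap(X_j\times_ST)$, and on each such piece the fibre product with ${\mathscr C}oh^n_{Y/S}$ agrees with the fibre product with ${\mathscr C}oh^n_{Y_j/S}$ over ${\mathscr C}oh^n_{X_j/S}$, which is a scheme.

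The remaining work is to glue these local schemes into a global one representing $T\times_{{\mathscr C}oh^n_{X/S}}{\mathscr C}oh^n_{Y/S}$. For this I would invoke Lemma \ref{glue}, which says exactly that the module restrictions behave well under the localisations $R\to R_x$ needed to pass between overlapping affine charts $X_j\cap X_k$: the scheme attached to $\NCWeil{B}{R_{xy}}{E}$ is the intersection of those attached to $\NCWeil{B}{R_x}{E}$ and $\NCWeil{B}{R_y}{E}$. This furnishes the compatible open immersions on overlaps, so the local representing schemes glue, and descent along the étale cover $\{S_i\}$ of the base then yields the global representing scheme. Schematicness (rather than mere algebraic-space representability) is preserved throughout since gluing schemes along open subschemes produces a scheme.

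The main obstacle I anticipate is not any single hard computation but the careful bookkeeping of the gluing: one must check that the identification between $T\times_{{\mathscr C}oh^n_{X/S}}{\mathscr C}oh^n_{Y/S}$ restricted over ${\mathscr C}oh^n_{X_j/S}$ and the module-restriction scheme of Theorem \ref{affine push forward} is canonical enough to be compatible on triple overlaps, and that separatedness of $X\ra S$ (used in Lemma \ref{push forward map} to ensure $f_*$ is even defined on stacks) is consistently available in each chart. Once the cocycle condition for these gluing data is verified using Lemma \ref{glue}, the corollary follows.
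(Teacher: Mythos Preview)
Your reduction to affine $S$ is fine, and so is the instinct to invoke Lemma~\ref{glue} to work Zariski-locally on $T$. The gap is in the next step, where you cover $X$ by affine opens $X_j$ and try to glue over the open substacks $\Coh{X_j}\subseteq\Coh{X}$. These substacks do \emph{not} cover $\Coh{X}$ once $n\geq 2$: a sheaf $\calE$ whose support consists of two points lying in different charts $X_{j_1}$ and $X_{j_2}$ belongs to neither $\Coh{X_{j_1}}$ nor $\Coh{X_{j_2}}$. Consequently the open subsets of $T$ over which you have a representing scheme need not cover $T$, and there is nothing to glue. Your sentence ``$Z$ admits an open cover by the $Z\cap(X_j\times_ST)$'' is true, but the fibre product you want lives over $T$, not over $Z$, so covering $Z$ does not translate into covering the fibre product. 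Lemma~\ref{glue} also does not help here: it produces open immersions between module restrictions for localisations of a \emph{single} ring $R$, whereas passing between different affine charts $X_j$ changes both $B$ and $R$ simultaneously and gives no common ambient scheme to glue inside. (There is also the smaller issue that $X$ is only an algebraic space, so a Zariski affine open cover need not exist.)

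The paper's route avoids this entirely. After reducing to $S$ and $T$ affine, one observes that the support $Z=\mathrm{Supp}(\calE)\subseteq X\times_ST$ is finite over the affine scheme $T$, hence $Z$ is itself affine; and since $f$ is affine, so is $f^{-1}(Z)$. Any $\calF\in\Coh{Y}(T')$ with $f_*\calF\cong\calE_{T'}$ is automatically supported in $f^{-1}(Z)\times_TT'$, so the fibre product $T\times_{\Coh{X}}\Coh{Y}$ identifies with $T\times_{\Coh{Z}}\Coh{f^{-1}(Z)}$, and now all three spaces $Z$, $f^{-1}(Z)$, $T$ are affine and Theorem~\ref{affine push forward} applies directly. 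No gluing over $X$ is needed.
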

\begin{proof} By \ref{base change for stacks} it suffices to show the result for affine base
  scheme $S$.  By Lemma \ref{glue} it suffices to show
  representability for fibres on $T$-valued points of ${\mathscr
    C}oh^n_{X/S}$, with affine $T$. Then the result follows from the
  theorem.
\end{proof}

\begin{lemma}\label{closed diagonal} Let $g\colon X\ra S$ be a separated map of algebraic spaces, and
let $f\colon Y \ra X$ be an affine morphism. Then the natural morphism
$$ {\mathscr C}oh^n_{Y/S} \times_{{ {\mathscr C}oh^n_{X/S}}}{\mathscr
  C}oh^n_{Y/S} \ra {\mathscr C}oh^n_{Y/S} \times_S {\mathscr
  C}oh^n_{Y/S}$$
is a closed immersion.
\end{lemma}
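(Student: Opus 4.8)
The plan is to reduce, via the standard criterion, to checking that the diagonal morphism in question is a closed immersion by testing on affine schemes over $S$ and then invoking the closed-condition results of Section~1. Since the source and target are stacks, the assertion is local on $S$, so by \ref{base change for stacks} I would first assume $S=\Spec(A)$ is affine. An object of ${\mathscr C}oh^n_{Y/S} \times_S {\mathscr C}oh^n_{Y/S}$ over an affine $T=\Spec(A')$ is a pair $(\calF_1,\calF_2)$ of sheaves on $Y\times_ST$, each finite, flat of relative rank $n$; the fibre product over ${\mathscr C}oh^n_{X/S}$ adds the datum of an isomorphism $\varphi\colon f_*\calF_1 \xrightarrow{\sim} f_*\calF_2$ of sheaves on $X\times_ST$. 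So the question is to show that, given such a pair together with $\varphi$, the locus in $T$ over which $\varphi$ is induced by an isomorphism $\calF_1\xrightarrow{\sim}\calF_2$ of sheaves on $Y\times_ST$ is a closed subscheme, and moreover that this closed subscheme represents the fibre product.

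Concretely, after the affine reduction write $X=\Spec(B)$, $Y=\Spec(R)$, and let $M_i$ be the $R$-module corresponding to $\calF_i$; since the supports are finite over $S$, each $M_i$ is finitely generated and projective as an $A$-module, of rank $n$. The isomorphism $\varphi$ gives a $B$-module isomorphism $f_*M_1 \cong f_*M_2$; the only remaining data is whether the $R$-action passes through $\varphi$. Here I would transport the two $R$-module structures into endomorphism algebras: the $R$-module structure on $M_i$ is an $A$-algebra homomorphism $\xi_i\colon R \to \operatorname{End}_A(M_i)$, and via $\varphi$ one may identify $\operatorname{End}_A(M_1)$ with $\operatorname{End}_A(M_2)=:E$, a finitely generated projective $A$-module (indeed $A$-algebra). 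Then $\calF_1 \cong \calF_2$ over $Y\times_ST$ compatibly with $\varphi$ precisely when $\xi_1$ and $\xi_2$ agree after base change along $A\to A'$. By Corollary~\ref{closed conditions}(2), the condition that two $A$-algebra homomorphisms $R \to E$ into a finitely generated projective $A$-algebra $E$ be equal is a closed condition, represented by $A/\operatorname{Fitt}(I)$ with $I=\{\xi_1(x)-\xi_2(x)\mid x\in R\}$; this is the closed subscheme we want. To see that it really represents the fibre product (not just cuts out the right underlying locus), one checks that over $A/\operatorname{Fitt}(I)$ the common homomorphism $\xi$ recovers an $R$-module structure on $M_1\bigotimes_A A/\operatorname{Fitt}(I)$ whose associated sheaf maps to $\calF_1$ and, via $\varphi$, to $\calF_2$, giving the universal isomorphism; this is essentially the same bookkeeping as in the proof of Theorem~\ref{affine push forward}.

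The main obstacle is the passage between the stack-theoretic statement and the algebra: one must argue carefully that the fibre product of stacks, restricted to affine test schemes, is computed by exactly the data $(M_1,M_2,\varphi)$ with the equality condition on $\xi_1,\xi_2$, and that the identification $\operatorname{End}_A(M_1)\cong\operatorname{End}_A(M_2)$ induced by $\varphi$ is compatible with everything in sight (in particular with the canonical maps from $B$). Once this dictionary is in place, glueing over a Zariski cover of $R$ (as in Lemma~\ref{glue}) and then over an affine cover of $X$, and finally descending the affine-base reduction, upgrades the local statement to the asserted closed immersion ${\mathscr C}oh^n_{Y/S} \times_{{\mathscr C}oh^n_{X/S}}{\mathscr C}oh^n_{Y/S} \to {\mathscr C}oh^n_{Y/S} \times_S {\mathscr C}oh^n_{Y/S}$; closed immersions may be checked locally on the target, so no further subtlety arises there.
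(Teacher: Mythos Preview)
Your reductions and the appeal to Corollary~\ref{closed conditions}(2) match the paper's toolkit, but you are testing the wrong equality. The map in the lemma \emph{forgets} the datum $\varphi\colon f_*\calF_1\xrightarrow{\sim}f_*\calF_2$; its fibre over a $T$-point $(\calF_1,\calF_2)$ of the target parametrises such isomorphisms on $X\times_ST$. You instead start from a point of the \emph{source} (so $\varphi$ is already given) and ask for the locus where $\varphi$ is induced by an isomorphism on $Y$---equivalently, where the transported $R$-module structures $\xi_1,\xi_2$ coincide. That is the condition $\calF_1\cong\calF_2$ on $Y$ compatibly with $\varphi$, which cuts out the diagonal of $\Coh{Y}$ inside the \emph{source}, not the map of the lemma.

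The paper checks a different equality. After arranging that the underlying $A$-module $M$ of $g_*f_*\calE_1$ and $g_*f_*\calE_2$ is the same, each $\calE_i$ gives $\xi_i\colon R\to\operatorname{End}_A(M)$; the paper then applies Corollary~\ref{closed conditions}(2) to the two composites $\xi_i\circ f\colon B\to\operatorname{End}_A(M)$ (with $f\colon B\to R$ the structure map), i.e., to the two induced \emph{$B$-module} structures on $M$. Equality of those is precisely what it means for $f_*\calE_1$ and $f_*\calE_2$ to agree over the affine open $\Spec(B)\subseteq X$, and that is the closed condition relevant to the fibre product over $\Coh{X}$. So the fix is: replace your condition $\xi_1=\xi_2$ (maps out of $R$) by $\xi_1\circ f=\xi_2\circ f$ (maps out of $B$), and drop the framing ``$\varphi$ lifts to $Y$''. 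Note also that the paper does not reduce $X$ to an affine scheme; it works on an affine open $\Spec(B)\subseteq X$ and lets $\Spec(R)=f^{-1}(\Spec(B))$, which is all that is needed once the problem is translated into module language.
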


\begin{proof} We may assume that $S$ is affine. By \ref{base change for stacks} it suffices to check
  closedness for $S$-valued points. Let ${\calE}_1$ and ${\calE}_2$ we
  two $S$-valued points of ${\mathscr C}oh^n_{Y/S}$ such that
  $g_*f_*{\calE}_1=g_*f_*{\calE}_2$ are equal as $S$-modules.  Let
  $S=\Spec(A)$, and let $M$ be an $A$-module such that
  $\tilde{M}=g_*f_*{\calE}_1=g_*f_*{\calE}_2$. Let $\Spec(B)$ be an open
  subscheme of $X$, and let $f^{-1}(\Spec(B))=\Spec(R)$. The sheaves
  $\calE$ and $\calE'$ restricted to $\Spec(R)$ are given by two
  $R$-module structures on $M$, say $\xi_i \colon R \ra
  \operatorname{End}_A(M)$, for $i=1,2$. These two morphisms composed
  with the structure map $\varphi\colon B \ra R$, gives two
  $A$-algebra homomorphisms $\xi_i\circ \varphi\colon  B \ra
  \operatorname{End}_A(M)$. By Corollary \ref{closed conditions}
  the equality of these two maps is a closed condition on $A$.
   \end{proof}

\section{Weil restrictions revisited}
We will in this section define an open subfunctor of the module restriction that inherits properties as \'etaleness. 

\begin{prop}\label{gens about Weil} Let $f\colon B \ra R$ be homomorphism of commutative
  $A$-algebras, and let $\Weil{B}{R}=\NCWeil{B}{R}{B}$ denote the Weil
  restriction functor. Then we have that 
\begin{enumerate}
\item If $f\colon B \ra R$ is of finite presentation, then 
  $\Weil{B}{R}$ is of finite presentation.
\item If $f\colon B \ra R$ satisfies the infinitesimal lifting
  property for \'etaleness (respectively smoothness), then the functor 
$ \Weil{B}{R}$ satisfies the corresponding infinitesimal lifting
property.
\end{enumerate}
\end{prop}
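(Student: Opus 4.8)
The plan is to reduce everything to the explicit description of the Weil restriction provided by Corollary \ref{Weil1stversion}, and then to check each property by unwinding the functor of points against test rings.

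\medskip

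\textbf{Finite presentation.} First I would write $R = B\bigotimes_A T_A(V)/I$ where, by hypothesis that $f\colon B\ra R$ is of finite presentation, we may take $V$ to be a free $A$-module of finite rank and $I$ a two-sided ideal generated by finitely many elements. Since $B$ is finitely generated and projective as an $A$-module, $A[V\bigotimes_A\dual{B}]$ is a polynomial ring over $A$ in finitely many variables, and the universal map $u_B$ of \ref{universal u2} sends the finitely many generators of $I$ to elements that generate (as an $A[V\bigotimes_A\dual{B}]$-module) the submodule $u_B(I)$ of a finitely generated projective module. Hence $\mathrm{Fitt}(u_B(I))$ is a finitely generated ideal, and Corollary \ref{Weil1stversion} presents $\Weil{B}{R}$ as $A[V\bigotimes_A\dual{B}]/\mathrm{Fitt}(u_B(I))$, a finitely presented $A$-algebra. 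One must be slightly careful that ``finite presentation'' of the functor means the representing $A$-algebra is of finite presentation over $A$; that is exactly what the Fitting-ideal description yields.

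\medskip

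\textbf{Infinitesimal lifting.} For part (2) I would argue directly on the functor of points, not on the representing object. Let $A'\ra A''$ be a surjection of $A$-algebras with square-zero (or nilpotent) kernel. By the remark following the definition of $\NCWeil{B}{R}{E}$, an $A'$-valued point of $\Weil{B}{R}$ is a $B\bigotimes_AA'$-algebra homomorphism $R\bigotimes_AA'\ra B\bigotimes_AA'$, and similarly over $A''$; moreover $R\bigotimes_AA' \ra R\bigotimes_AA''$ and $B\bigotimes_AA'\ra B\bigotimes_AA''$ are surjections with square-zero kernel (the kernels being $R\bigotimes_A(\ker)$ and $B\bigotimes_A(\ker)$, which square to zero since the kernel does and $R,B$ are flat-enough — here one uses only that tensoring preserves square-zero-ness of an ideal, which is automatic). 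So a lifting problem for $\Weil{B}{R}$ against $A'\ra A''$ is precisely a lifting problem for $f\colon B\ra R$ against $B\bigotimes_AA'\ra B\bigotimes_AA''$ in the category of rings under $B\bigotimes_AA'$. If $f$ satisfies the infinitesimal lifting property for étaleness (resp. smoothness), then such a lift exists (resp. exists, and is unique in the étale case), because that property is stable under base change $B\ra B\bigotimes_AA'$. Unwinding the identification of points gives the corresponding lift (resp. unique lift) for $\Weil{B}{R}$.

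\medskip

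\textbf{Main obstacle.} The genuinely delicate point is the bookkeeping in the second part: one must verify that the identification of $A'$-points of $\Weil{B}{R}$ with $B\bigotimes_AA'$-algebra maps $R\bigotimes_AA'\ra B\bigotimes_AA'$ is compatible with the two base changes $A'\ra A''$ in a way that turns a square-zero extension $A'\ra A''$ into a square-zero extension $B\bigotimes_AA'\ra B\bigotimes_AA''$, and that the target $E=B$ being commutative is what makes ``$B$-algebra homomorphism to $B\bigotimes_AA'$'' literally the same as ``lift along the given square-zero extension''. For smoothness one also checks that $\Weil{B}{R}$ is in addition locally of finite presentation, which is covered by part (1). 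Once this translation is set up, the lifting properties transfer formally, since the infinitesimal criteria are base-change stable.
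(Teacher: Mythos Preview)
Your argument for part (2) is correct and is essentially the paper's proof: translate a lift of an $A'/N$-point of $\Weil{B}{R}$ into a $B$-algebra lifting problem along the nilpotent surjection $B\bigotimes_AA' \to B\bigotimes_A(A'/N)$ and invoke the infinitesimal lifting property of $f$. The paper does this slightly more directly (no preliminary base change $B\to B\bigotimes_AA'$ is needed, since the lifting property of $f$ applies to any nilpotent surjection of $B$-algebras), but the content is the same.

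Your argument for part (1), however, has a genuine gap. You invoke Corollary~\ref{Weil1stversion} to produce an explicit finitely presented representing algebra, and to that end you write ``since $B$ is finitely generated and projective as an $A$-module''. But this is \emph{not} among the hypotheses of the proposition, and it cannot be added: the proposition is applied later (via Proposition~\ref{Iso = Weil} and Corollary~\ref{etaleness of iso}) to the Weil restriction $\Weil{B_H}{R_H}$, where the remark following Proposition~\ref{Iso = Weil} explicitly points out that $B_H$ is \emph{not} assumed finitely generated or projective over the base. So Corollary~\ref{Weil1stversion} is simply unavailable here, and your approach to (1) does not go through.

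The paper avoids this by never touching a representing object. It uses the functorial characterisation of finite presentation: one must show that for any filtered system $\{A_\alpha\}$ the natural map $\varinjlim \Weil{B}{R}(A_\alpha) \to \Weil{B}{R}(\varinjlim A_\alpha)$ is a bijection. Unwinding, an $A'$-point is a $B$-algebra map $R\to B\bigotimes_AA'$, and since $B\bigotimes_A(\varinjlim A_\alpha) = \varinjlim(B\bigotimes_AA_\alpha)$ and $f\colon B\to R$ is of finite presentation, such maps out of $R$ commute with filtered colimits of $B$-algebras. This argument uses nothing about $B$ as an $A$-module.
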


\begin{proof} We will use the functorial characterisation \cite[8.14.2.2]{EGA4III} to prove the
  first assertion.  If we have a
directed system of $A$-algebras (and $A$-algebra homomorphisms)
$\{A_{\alpha}\}_{\alpha \in \calA}$, then we obtain an induced
injective map
\begin{equation}\label{bijection} \lim_{\to} \Weil{B}{R}(A_{\alpha}) \ra
\Weil{B}{R}(\lim_{\to}(A_{\alpha})).
\end{equation}
We need to see that this map \ref{bijection} is a bijection. Let
$\lim_{\to}A_{\alpha}=A'$, and let $\xi' \in \Weil{B}{R}(A')$. Then we obtain the following
  commutative diagram
$$ \xymatrix{ R \ar[dr]^{\xi'}&  \\
B \ar[u]^f \ar[r] & B\bigotimes_AA',}$$
and in particular we have that $\xi'$ is a $B$-algebra homomorphism. As $\lim_{\to}
(B\bigotimes_AA_{\alpha})=B\bigotimes_AA'$, and as $f \colon B \ra R$
is of finite presentation, we have a bijection 
$$ \lim_{\to }\xymatrix{(\mathrm{Hom}_{B\text{-alg}}(R,
    B\bigotimes_AA_{\alpha})) \ra \mathrm{Hom}_{B\text{-alg}}(R,
    B\bigotimes_AA') }.$$
Consequently $\xi'=\{\xi_{\alpha}\}$ is a sequence of $B$-algebra homomorphisms $\xi_{\alpha} \in \Weil{B}{R}(A_{\alpha})$. Thus the map
\ref{bijection} is a bijection, and $\Weil{B}{R}$ is of finite
presentation.
To check the two remaining assertions, let $A'$ be an $A$-algebra, and
$N\subseteq A'$ a nilpotent ideal. Let $\xi_N$ be an $A'/N$-valued
point of $\Weil{B}{R}$. We then obtain the following commutative
diagram
$$ \xymatrix{ R \ar[r]^{\xi_N} &B\bigotimes_AA'/N \\
B\ar[u]^f \ar[r] & B\bigotimes_AA'.\ar[u] }$$
Now, as $N\subseteq A'$ is nilpotent, the kernel of the canonical map 
$$\xymatrix{B\bigotimes_AA' \ra B\bigotimes_AA'/N=B/NB}$$
is nilpotent. Then if $f\colon B \ra R$ has an infinitesimal lifting
property, we obtain a lifting $\xi' \colon R \ra B\bigotimes_AA'$ of
$\xi_N$. Then $\xi'$ is an $A'$-valued point of $\Weil{B}{R}$, and we
have proved the last assertions. 
\end{proof} 

\begin{rem} It is well-known that the Weil restriction
$\Weil{B}{R}$ inherits properties as \'etaleness, smoothness, if $B\ra
R$ is \'etale, respectively smooth. In \cite{BLR}, e.g., these and
other properties are shown for the Weil restriction, however with some
assumptions that does not apply in our context. 
\end{rem}

\begin{ex} We give here an example showing that if $B\ra R$ is
  \'etale, then $\NCWeil{B}{R}{E}$ will not necessarily satisfy the
  infinitesimal lifting property. That fact was pointed out to us by Dan Laksov, who thereby corrected an error we had in a earlier version of this article. Consider first
  the matrices
$$ x=\begin{bmatrix} 0 & \epsilon \\ 0 & 0 \end{bmatrix} \quad \text{
  and } \quad  y =\begin{bmatrix} a_1+a_2\epsilon & b_1+b_2\epsilon \\ c_1+c_2\epsilon & d_1+d_2\epsilon \end{bmatrix},$$  
where the entries of the matrices are in some ring $A$,  where $\epsilon $ is a non-zero element  such that $\epsilon
^2=0$.  Since
$$ xy=\begin{bmatrix} a_1\epsilon & b_1\epsilon \\ 0  & 0\end{bmatrix}
\quad \text{ and } \quad yx =\begin{bmatrix} 0 & b_1\epsilon \\ 0 &
  c_1\epsilon \end{bmatrix},$$
these matrices do not in general  commute. However, when we set
$\epsilon =0$, the matrix $x$ becomes the zero matrix and the reduced
matrices clearly commute. Therefore we have the following. Let
$A=k[\epsilon]/(\epsilon^2)$, over a field $k$. Let $B=A[X]$ denote
the polynomial ring in the variable $X$ over $A$, and let $M=A\bigoplus A$. The matrix $x$
gives a $B$-module structure on $M$ by sending the variable $X$ to the
matrix $x$. Thus we have an $A$-algebra homomorphism $\mu \colon B \ra
\operatorname{End}_A(A\bigoplus A)=E$. We let $R=A[X,Y]/(Y^2-1)$, which
is \'etale over $B$ when the characteristic of $k$ is different from two.  We let furthermore $A'=A$, and the nilpotent ideal
$N=(\epsilon)\subseteq A$. We then have the following commutative diagram
$$ \xymatrix{ A \ar[r] & B=A[X] \ar[r]^f \ar[d]^{\mu} & R=A[X,Y]/(Y^2-1) \ar[d]^{\xi} \\
 & \operatorname{End}_A(A\bigoplus A) \ar[r] &
 \operatorname{End}_k(k\bigoplus k)=E\bigotimes_A k,}
$$
where $\xi \colon R \ra \operatorname{End}_k(k\bigoplus k)$ is determined
by sending $X$ to $0$ and sending $Y$ to the endomorphism given by the matrix
$\overline{y}=\begin{bmatrix} 0 & -1 \\ -1 & 0\end{bmatrix}$. 
Any lifting of $\overline{y}$ to an element in $\operatorname{End}_A(A\bigoplus A)$ is
of the form 
$$ y=\begin{bmatrix} a_2\epsilon &-1+ b_2\epsilon \\-1+ c_2\epsilon &
  d_2\epsilon \end{bmatrix}$$
with elements $a_2, b_2, c_2, d_2$ in $k$. From the considerations
above we have that no such lifting will commute with the matrix
$x$. Therefore there exist no $B$-algebra homomorphism $\tilde{\xi}
\colon R \ra \operatorname{End}_A(A\bigoplus A)$ that extends
$\xi$. Thus, even if $f\colon B \ra R$ is \'etale, the $A$-algebra
$\NCWeil{B}{R}{E}$, and
$\Weilmodule{B}{R}{M}$, are not necessarily \'etale.
\end{ex}

\begin{ex} Our next example shows  that even if $B\ra R$ is of
  finite presentation, the $A$-algebra representing $\NCWeil{B}{R}{E}$
  is not of finite presentation. It follows, though, from the
  constructions that if $B\ra R$ is of finite type, then the
  $A$-algebra $\NCWeil{B}{R}{E}$ is of finite type. 

Let
  $A=k[w_i]_{i\geq 0}$ be the polynomial ring in a countable
  number of variables $w_1, w_2, \ldots $ over some ring $k$. Let
  $M=A\bigoplus A$ be the free $A$-module of rank 2. From the polynomial
  ring in one variable $A[T]$ over $A$,  we obtain an $A[T]$-module structure on
  $M$ by sending $T$ to the matrix 
$$ t=\begin{bmatrix} 0 & 1 \\ 1 & 0 \end{bmatrix}.$$
Let $B=A[X_i]_{i\geq 1}$ be the polynomial ring in the variables $X_1,
X_2, \ldots, $ over $A$. For each $i$ we consider the matrix
$$ x_i =\begin{bmatrix} w_i & 0 \\ 0 & w_{i+1} \end{bmatrix}.$$
Since the diagonal matrices commute, we get an $B$-module structure on
$M$ by sending the variable $X_i$ to the matrix $x_i$. One checks that
the two
$A$-algebra homomorphisms $\mu \colon B \ra E=\operatorname{End}_A(M)$
and $u \colon A[T] \ra E$ commute if and only if $w_i=w_{i+1}$, for all $i=1, 2,
\ldots $. Thus, the closed condition on $A$ over where the two maps
$\mu$ and $u$ commute is given by $A/(w_{i}-w_{i+1})_{i\geq 1}$, which
is not of finite presentation. As finite presentation is preserved
under specialisation, we get  that the  $A$-algebra
$\NCWeil{B}{B\bigotimes_AA[T]}{E}$ can  not be of finite presentation
either. 
\end{ex}

\subsection{Isomorphic image functor} Let $A\ra B\ra R$ be
homomorphisms of commutative rings, and let $\mu \colon B \ra E$ be a
homomorphism of
$A$-algebras, where $E$ is not necessarily commutative. We will define a subfunctor
\begin{equation}\label{defn iso image}
\Iso{B}{R}{E} \subseteq \NCWeil{B}{R}{E}.
\end{equation}
Recall that an $A'$-valued point of $\NCWeil{B}{R}{E}$ is an
$A'$-algebra homomorphism $\xi'$ that fits into the
following commutative diagram
\begin{equation}\label{A'valued point}
\xymatrix{ R\bigotimes_AA' \ar[dr] ^{\xi'} & \\ B\bigotimes_AA' \ar[u]
  \ar[r]_{\mu \otimes 1} & E\bigotimes_AA'.}
\end{equation}
In particular we have an induced map $B\bigotimes_AA' \ra \im(\xi')$,
where $\im(\xi')$ denotes the image of the homomorphism $\xi'$. For any $A$-algebra $A'$, we let
$$
\Iso{B}{R}{E}(A')=\{ \xi' \in \NCWeil{B}{R}{E} \text{ such that } \im (\mu \otimes 1)=\im(\xi') \}.
$$

\begin{prop}\label{Isorepresentability} Let $A\ra B \ra R$ be homomorphism of commutative rings,
  and let $\mu \colon B \ra E$ be an $A$-algebra homomorphism with $E$ not necessarily commutative. Assume that $E$ is finitely
  generated and projective as an $A$-module, and that $B\ra R$ is of
  finite type. Then the functor $\Iso{B}{R}{E}$ is representable by an open
  subscheme of $\NCWeil{B}{R}{E}$.
\end{prop}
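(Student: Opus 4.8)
The plan is to exhibit the representing open subscheme inside the affine scheme representing $\NCWeil{B}{R}{E}$. By Corollary \ref{NCWeil is rep} the functor $\NCWeil{B}{R}{E}$ is representable by an $A$-algebra $H$; let $\xi\colon R\ra E\bigotimes_AH$ denote the universal $B$-algebra homomorphism. By the usual criterion for a subfunctor of a representable functor to be open, it suffices to produce an open subscheme $U\subseteq\Spec(H)$ with the property that a morphism $\Spec(A')\ra\Spec(H)$ factors through $U$ exactly when the corresponding base change $\xi'$ of $\xi$ satisfies $\im(\mu\otimes 1)=\im(\xi')$, together with the (routine) checks that $\Iso{B}{R}{E}$ is a Zariski sheaf and that this condition is stable under base change.

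First I would reduce the condition to finitely many conditions. Since $f\colon B\ra R$ is of finite type, write $R=B[x_1,\dots,x_m]/J$. For any $A$-algebra $A'$ and any $\xi'\in\NCWeil{B}{R}{E}(A')$ the subset $\{\,r\in R\mid\xi'(r)\in\im(\mu\otimes 1)\,\}$ is a $B$-subalgebra of $R$ containing the image of $f$: indeed $\im(\mu\otimes 1)$ is a commutative subring of $E\bigotimes_AA'$, it contains $\xi'(f(B))$ by the commutativity of the diagram \eqref{A'valued point}, and $\xi'$ is a ring homomorphism. Hence $\im(\mu\otimes 1)=\im(\xi')$ if and only if $\xi'(x_i)\in\im(\mu\otimes 1)$ for $i=1,\dots,m$, and applying this to the universal point reduces the question to the finitely many elements $w_i:=\xi(x_i)\in E\bigotimes_AH$.

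Second, I would pass to a coherent module on $\Spec(H)$. As $E$ is finitely generated over $A$, so is the quotient $A$-module $C:=E/\mu(B)$; and since cokernels commute with base change we have $C\bigotimes_AH=\operatorname{coker}\bigl(\mu\otimes 1\colon B\bigotimes_AH\ra E\bigotimes_AH\bigr)$, hence $(C\bigotimes_AH)\bigotimes_HA'=(E\bigotimes_AA')/\im(\mu\otimes 1)$ for every $A$-algebra $A'$. Writing $\bar w_i$ for the image of $w_i$ in the finitely generated $H$-module $C\bigotimes_AH$, the condition defining $\Iso{B}{R}{E}(A')$ becomes the vanishing of the images of $\bar w_1,\dots,\bar w_m$ in $(C\bigotimes_AH)\bigotimes_HA'$. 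I would then argue that this locus is precisely the complement in $\Spec(H)$ of the support of a finitely generated $H$-module $N$ built from the $\bar w_i$ — e.g.\ the submodule they generate inside $C\bigotimes_AH$ — whose support $V(\mathrm{ann}_H(N))=V(\mathrm{Fitt}_0(N))$ is closed by the Fitting-ideal considerations of the first section (cf.\ Lemma \ref{fitting}). This gives the open subscheme $U=\Spec(H)\setminus\mathrm{Supp}(N)$ representing $\Iso{B}{R}{E}$.

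The step I expect to be the main obstacle is this last identification. A priori the natural map $N\bigotimes_HA'\ra(C\bigotimes_AH)\bigotimes_HA'$ need not be injective, so the vanishing of the images of the $\bar w_i$ after base change is not literally the vanishing of $N\bigotimes_HA'$, and one has to show that the locus is genuinely cut out by the support of a finitely generated module — hence open — rather than by some merely locally closed condition. This is where the finite-type hypothesis on $B\ra R$, the concrete description of the universal map $\xi$ in the Example following Proposition \ref{mapsfromTV}, and the fact that $\im(\mu\otimes 1)$ is the base change from $A$ of the $A$-submodule $\mu(B)\subseteq E$ should be brought to bear; once this is settled the rest is formal.
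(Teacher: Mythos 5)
Your first two reductions are correct and useful: since $\im(\mu\otimes 1)$ is a subring of $E\bigotimes_AA'$ containing $\xi'(f(B))$, and $\im(\mu\otimes1)\subseteq\im(\xi')$ automatically, the condition $\im(\xi')=\im(\mu\otimes1)$ is indeed equivalent to $\xi'(x_i)\in\im(\mu\otimes1)$ for finitely many generators $x_i$ of $R$ over $B$, and this in turn is the vanishing of the images of $\bar w_i$ in $C\bigotimes_AA'$ with $C=E/\mu(B)$. But the step you yourself flag as the ``main obstacle'' is a genuine gap, and it cannot be closed within the purely $H$-linear framework you set up. The condition ``a given element of a finitely generated $H$-module dies after base change to $A'$'' is in general \emph{not} the condition of landing in the complement of the support of the submodule it generates: take $H=k[t]$, the module $H$ itself and the element $t$; the submodule $tH$ has support all of $\Spec(H)$, yet the element dies after base change to $A'=k=H/(t)$, so the locus is the closed set $V(t)$, not an open set. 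So the identification ``locus $=$ complement of $\mathrm{Supp}(N)$'' is false at the level of generality in which you state it, and no Fitting-ideal manipulation of $N\subseteq C\bigotimes_AH$ as an $H$-module will rescue it; some additional structure must enter, and your sketch does not say which or how.

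The paper's proof supplies exactly the missing structure, and it is multiplicative rather than linear. One works inside $E\bigotimes_AH$ with the commutative subalgebra $\im(\xi)$ and the subring $B_H=\im(\mu\otimes1\colon B\bigotimes_AH\ra E\bigotimes_AH)$. By Cayley--Hamilton every element of $E\bigotimes_AH$ is integral over $H$; combined with the finite-type hypothesis this makes $\im(\xi)$ a \emph{finitely generated $B_H$-module}, so the quotient $\im(\xi)/B_H$ is a finite $B_H$-module whose support $Z\subseteq\Spec(B_H)$ is closed, and since $B_H$ is integral over $H$ the map $\Spec(B_H)\ra\Spec(H)$ is closed, so $g(Z)$ is closed and $U=\Spec(H)\setminus g(Z)$ is the representing open subscheme. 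The reason openness holds there, and not in your formulation, is that the obstruction module $\im(\xi)/B_H$ is finite over the integral extension $B_H$ (not merely an $H$-submodule of $C\bigotimes_AH$), which is what turns ``the $w_i$ land in the image of $B$ after base change'' into the complement of a closed image of a support. To repair your argument you would have to import this ring-theoretic finiteness (or an equivalent device such as Proposition \ref{Iso = Weil}'s comparison with the Weil restriction of $B_H\ra R_H$); as written, the key openness claim is unproved and, in the form stated, untrue.
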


\begin{proof} Let $H$ be the $A$-algebra representing the functor
  $\NCWeil{B}{R}{E}$, Corollary \ref{NCWeil is rep}. Let $\xi
  \colon R\bigotimes_AH \ra E\bigotimes_AH$ denote the
  universal element of $\NCWeil{B}{R}{E}$,  and let $\im( \xi)  \subseteq E\bigotimes_AH$ denote the
  image of $\xi$. Let furthermore, $B_H$ denote the image of
  $\mu\otimes 1\colon B\bigotimes_AH \ra E\bigotimes_AH$.  We have the inclusion of
  $B_H$-modules $B_H \subseteq \im(\xi) \subseteq E\bigotimes_AH$. Any element $x\in E\bigotimes_AH$
  gives by multiplication, an $H$-linear endomorphism on $E\bigotimes_AH$. The
  Cayley-Hamilton theorem guarantees that the element $x$ will satisfy
  its characteristic polynomial, and consequently that $x$ is integral over
  $H$. From this we deduce the following two consequences. Firstly, since $R$ is
  finite type over $B$ we have that $\im(\xi)$ is a finitely generated
  $B_H$-module. In particular the quotient $B_H$-module $\im(\xi)/B_H$ has
closed  support $Z\subseteq \Spec(B_H)$ given by the annihilator ideal
  $\operatorname{ann}_{B_H}(\im(\xi)/B_H)$. Secondly, as $B_H$ is integral over $H$,
  the corresponding morphism $g \colon \Spec(B_H) \ra \Spec(H)$ is
  closed. Thus $g(Z)$ is the closed subscheme given by the ideal
  $\operatorname{ann}_{B_H}(\im(\xi)/B_H)\cap H$. And as $g^{-1}(g(Z))=Z$ it
  is clear that the open subscheme $U=\Spec(H) \setminus g(Z)$
  represents $\Iso{B}{R}{E}$.
\end{proof}

\begin{rem} Similar result can be found in  \cite{Gamma}. 
\end{rem}

\subsection{Properties of the Isomorphic image functor} We keep the
notation introduced above, and assume that the $A$-algebra $E$ is
finitely generated and projective as an $A$-module.  Let $H$ be the $A$-algebra representing
$\NCWeil{B}{R}{E}$, and let $\xi \colon R\bigotimes_AH \ra
E\bigotimes_AH$ denote the universal element. We then have the
following commutative diagram
\begin{equation}\label{universal diagram}
\xymatrix{ R\bigotimes_AH \ar[r] & R_H \ar[dr] & & \\
B\bigotimes_AH \ar[u] \ar[r] & B_H \ar[u] \ar[r]^i & \im(\xi)
\ar[r] &  E\bigotimes_AH, }
\end{equation}
where $B_H$ is the image of $\mu \otimes 1\colon
B\bigotimes_AH \ra E\bigotimes_AH$, and where
$R_H=R\bigotimes_BB_H$.

\subsubsection{}  When we restrict the diagram \ref{universal diagram} to the open
subscheme $U\subseteq \Spec(H)$ representing $\Iso{B}{R}{E}$,
Proposition \ref{Isorepresentability}, we get
by definition that the map of ${\calO}_U$-modules
\begin{equation}\label{restriction to U}
i_{|U} \colon {B_H}_{|U} \ra \im(\xi)_{|U}
\end{equation}
is surjective. As $U\subseteq \Spec(H)$ is an open immersion, and in
particular a flat map that preserves injectivity, we get that the
restriction morphism \ref{restriction to U} is an isomorphism.

\begin{prop}\label{Iso = Weil} Let $A\ra B \ra R$ be homomorphism of commutative rings,
  and let $\mu \colon B \ra E$ be an $A$-algebra homomorphism,  with $E$
  not necessarily commutative. Assume that $E$ is finitely
  generated and projective as an $A$-module, and that $B\ra R$ is of
  finite type. Let $H$ be the $A$-algebra representing
  $\NCWeil{B}{R}{E}$. Let $B_H$ denote the image of the
  composite map $\mu \otimes 1 \colon B\bigotimes_AH \ra E\bigotimes_AH$, and let
  $R_H=R\bigotimes_BB_H$. Then we have that the
  functor $\Iso{B}{R}{E}$ equals the Weil restriction
  $\Weil{B_H}{R_H}$. In particular we have that the Weil restriction $\Weil{B_H}{R_H}$ is representable by a scheme.
\end{prop}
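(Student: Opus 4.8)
The plan is to identify, functorially, the $A'$-valued points of $\Iso{B}{R}{E}$ with those of the Weil restriction $\Weil{B_H}{R_H}$, treating everything over the base ring $H$. First I would recall from the discussion preceding the proposition, in particular the commutative diagram \ref{universal diagram} and the subsequent paragraph, that over the open subscheme $U=\Spec(H)\setminus g(Z)$ representing $\Iso{B}{R}{E}$ the map $i_{|U}\colon {B_H}_{|U}\ra \im(\xi)_{|U}$ is an isomorphism of ${\calO}_U$-modules. Geometrically this says that after restricting $H$ to $U$ we may identify $B_H$ with $\im(\xi)$, so the universal map $\xi$ becomes, over $U$, a surjection $R_H\ra B_H$ of $B_H$-algebras which is a section of the structure map $B_H\ra R_H$; equivalently, $\xi$ factors as a $B_H$-algebra homomorphism $R_H=R\bigotimes_BB_H\ra B_H$.

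Next I would set up the natural transformation in both directions. Let $A'$ be an $A$-algebra and $\varphi\colon H\ra A'$ the corresponding classifying homomorphism, which factors through ${\calO}_U$ precisely when $\xi\otimes_HA'$ lies in $\Iso{B}{R}{E}(A')$. Given such a $\varphi$, base-changing the factorisation above along $\varphi$ yields a $B_H\bigotimes_HA'$-algebra homomorphism $R_H\bigotimes_HA'\ra B_H\bigotimes_HA'$, which is exactly an $(A$-linear, $B_H$-linear$)$ $A'$-valued point of $\Weil{B_H}{R_H}=\NCWeil{B_H}{R_H}{B_H}$. Conversely, any $B_H$-algebra homomorphism $R_H\bigotimes_HA'\ra B_H\bigotimes_HA'$ composes with the inclusion $B_H\hookrightarrow E\bigotimes_AH$ (base-changed to $A'$) and with $R\ra R_H$ to give an element $\xi'$ of $\NCWeil{B}{R}{E}(A')$; since the image of $\xi'$ then equals the image of $B_H\bigotimes_HA'$, which is the image of $\mu\otimes 1$, this $\xi'$ lies in $\Iso{B}{R}{E}(A')$. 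One checks that these two assignments are mutually inverse and natural in $A'$, using only the universal properties of the tensor product and the identification $R_H=R\bigotimes_BB_H$; this is the routine bookkeeping I would not spell out in full. Representability of $\Weil{B_H}{R_H}$ by a scheme then follows, since $\Iso{B}{R}{E}$ is representable by the open subscheme $U$ of $\Spec(H)$ by Proposition \ref{Isorepresentability}.

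The main obstacle is making the identification $B_H\cong\im(\xi)$ over $U$ genuinely compatible with base change, so that the bijection on $A'$-points is well-defined on the nose rather than merely up to the isomorphism \ref{restriction to U}. The point is that $U\ra \Spec(H)$ is flat, so the formation of $\im(\xi)$ commutes with the restriction to $U$, and after that restriction $B_H$ and $\im(\xi)$ literally coincide as subsheaves of $E\bigotimes_AH$; I would phrase the whole argument over ${\calO}_U$ from the start to sidestep the issue. A secondary subtlety is that $B_H$ need not be finitely generated or projective over $H$, so one cannot invoke the earlier representability results for $\NCWeil{B_H}{R_H}{B_H}$ directly — but representability is obtained here precisely through the geometric description of $U$, not through those results, so this causes no difficulty.
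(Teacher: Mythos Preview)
Your proposal is correct and follows essentially the same route as the paper: you construct the bijection on $A'$-points in both directions using the isomorphism $i_{|U}\colon {B_H}_{|U}\ra \im(\xi)_{|U}$ from \ref{restriction to U}, exactly as the paper does via the diagram obtained by base-changing \ref{universal diagram}. Your discussion of the two subtleties (base-change compatibility of $\im(\xi)$, and $B_H$ not being finitely generated projective) is apt and goes slightly beyond what the paper makes explicit.
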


\begin{proof} Let $A'$ be an $A$-algebra,  and let
  $\varphi \colon \Spec(A') \ra
  \Spec(H)$ be a morphism that factorises via the open immersion
  $U\subseteq \Spec(H)$, where $U$ represents $\Iso{B}{R}{E}$. From
  the diagram \ref{universal diagram} we obtain the commutative
  diagram 
\begin{equation}
\xymatrix{ R_H\bigotimes_AA' \ar[dr] & \\ B_H\bigotimes_AA' \ar[u]
  \ar[r]^{i\otimes 1} & \im(\xi) \bigotimes_AA'.}
\end{equation}

As $\varphi $ factors via $U\subseteq \Spec(H)$ we get from \ref{restriction to U}, that $i\otimes 1$ in the diagram above is an
isomorphism. We have that
$B_H\bigotimes_HA'=B_H\bigotimes_AH\bigotimes_HA'=B_H\bigotimes_AA'$. Consequently
the composition 
$$\xymatrix{ R_H\bigotimes_HA'=R_H\bigotimes_AA'
  \ar[r] & 
  \im(\xi)\bigotimes_AA' \ar[r]^{j} &
    B_H\bigotimes_AA'}$$
is an $A'$-valued point of the Weil restriction
$\Weil{B_H}{R_H}$, with $j=(i\otimes 1)^{-1}$.

Conversely, let  $s\colon R_H\bigotimes _AA' \ra
B_H\bigotimes_AA'$ be an $A'$-valued point of the Weil
restriction. We then get the commutative diagram 
$$ \xymatrix{R\ar[r] & R_H\bigotimes_AA' \ar@<1ex>[d]^s & 
\\  B\ar[u] \ar[r] & B_H\bigotimes_AA' \ar[u] \ar[r]^{i\otimes
    1} &\im(\xi)\bigotimes_AA'.}
$$ 
The composition $R\ra \im(\xi)\bigotimes_AA'\ra E\bigotimes_AA'$ is an
$A'$-valued point $\xi'$ of $\NCWeil{B}{R}{E}$. By construction the
image of $\xi'$ equals the image of $\mu\otimes 1 \colon
B\bigotimes_AA' \ra E\bigotimes_AA'$. Hence we
have that $\xi'$ is an $A'$-valued point of $\Iso{B}{R}{E}$. We have
now constructed a functorial bijection between the $A'$-valued points
of $\Iso{B}{R}{E}$ and the $A'$-valued points of the Weil restriction $\Weil{B_H}{R_H}$.
\end{proof}

\begin{rem} Note that the $A$-algebra $B_H$ is not assumed to be
  finitely generated or projective as an $A$-module.  
\end{rem}

\begin{cor}\label{etaleness of iso} Let $A\ra B \ra R$ be homomorphism of commutative rings,
  and let $\mu \colon B \ra E$ be an $A$-algebra homomorphism, with $E$
  not necessarily commutative. Assume that $E$ is finitely
  generated and projective as an $A$-module, and that $B\ra R$ is
  \'etale (smooth). Then the scheme representing the functor
  $\Iso{B}{R}{E}$ is \'etale (respectively smooth).
\end{cor}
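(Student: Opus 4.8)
The plan is to deduce the assertion from the corresponding behaviour of ordinary Weil restrictions established in Proposition \ref{gens about Weil}, using the identification of the isomorphic image functor with a genuine Weil restriction from Proposition \ref{Iso = Weil}. Observe first that an \'etale (or smooth) ring homomorphism is in particular of finite type, so all the running hypotheses of Propositions \ref{Isorepresentability} and \ref{Iso = Weil} are satisfied: writing $H$ for the $A$-algebra representing $\NCWeil{B}{R}{E}$, $B_H$ for the image of $\mu\otimes 1\colon B\otimes_AH\to E\otimes_AH$ and $R_H=R\otimes_BB_H$, the functor $\Iso{B}{R}{E}$ is representable by an open subscheme $U$ of $\Spec(H)$, and it is moreover identified with the Weil restriction $\Weil{B_H}{R_H}$.

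Next I would note that, since $R_H=R\otimes_BB_H$, the homomorphism $B_H\to R_H$ is nothing but the base change of $f\colon B\to R$ along $B\to B_H$. As \'etaleness (resp. smoothness) of homomorphisms of commutative rings is stable under base change, $B_H\to R_H$ is again \'etale (resp. smooth); in particular it is of finite presentation and satisfies the infinitesimal lifting property for \'etaleness (resp. smoothness). Applying Proposition \ref{gens about Weil}, parts (1) and (2), to the homomorphism of commutative $A$-algebras $B_H\to R_H$, we conclude that the functor $\Weil{B_H}{R_H}$ is of finite presentation and satisfies the infinitesimal lifting property for \'etaleness (resp. smoothness). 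Since this functor equals $\Iso{B}{R}{E}$ and is therefore representable by the scheme $U$, the morphism $U\to\Spec(A)$ is locally of finite presentation and formally \'etale (resp. formally smooth), hence \'etale (resp. smooth) by the usual characterisation of such morphisms of schemes.

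I do not expect a real obstacle; the argument is essentially a combination of Propositions \ref{Iso = Weil} and \ref{gens about Weil} with the base-change stability of \'etale and smooth maps. The one point to watch is that Proposition \ref{gens about Weil} must be invoked for $B_H\to R_H$ even though $B_H$ is in general neither finitely generated nor projective over $A$ (compare the remark after Proposition \ref{Iso = Weil}); this causes no trouble, since that proposition only requires $B_H\to R_H$ to be a homomorphism of commutative $A$-algebras with the relevant finiteness and lifting properties, all of which we have checked. One should also keep in mind that the representing scheme $U$ lies over $\Spec(H)$, so the \'etaleness (smoothness) is asserted over $A$ and not over $H$; this is consistent with the Examples of this section, where $\Spec(H)\to\Spec(A)$ fails to be \'etale precisely at points that do not lie in the isomorphic image locus $U$.
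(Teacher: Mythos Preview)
Your proof is correct and follows essentially the same route as the paper: reduce to Proposition~\ref{Iso = Weil} to identify $\Iso{B}{R}{E}$ with the Weil restriction $\Weil{B_H}{R_H}$, observe that $B_H\to R_H$ is \'etale (resp.\ smooth) as a base change of $B\to R$, and conclude via Proposition~\ref{gens about Weil}. Your additional remarks about finite presentation, the applicability of Proposition~\ref{gens about Weil} without projectivity assumptions on $B_H$, and the distinction between $U$ and $\Spec(H)$ are accurate and only make the argument more explicit than the paper's.
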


\begin{proof} If $f\colon B\ra R$ is \'etale, or smooth, then in particular
  it is of finite type. Hence, by Proposition
  \ref{Isorepresentability}, the functor $\Iso{B}{R}{E}$ is
  representable by a scheme. Moreover, by Proposition \ref{Iso = Weil}
  we have that $\Iso{B}{R}{E}$ equals the Weil restriction
  $\Weil{B_H}{R_H}$, where we use the notation of Proposition \ref{Iso
    = Weil}. As $f\colon B \ra R$ is \'etale, or smooth, then we have
  by base change that $B_H \ra R_H$ is \'etale, or respectively
  smooth. The result then follows by Proposition \ref{gens about
    Weil}.
\end{proof}

\begin{cor}\label{openessofiso} Let $f\colon B\ra R$ be an $A$-algebra homomorphism. Let $M$ be
  an $B$-module, and let $E=\operatorname{End}_A(M)$. Assume that $M$
 is finitely generated and projective as an $A$-module.  Then we have that for any $A$-algebra $A'$, the set of $A'$-valued points of $\Iso{B}{R}{E}$
  corresponds to $R'=R\bigotimes_AA'$-module structures on
  $M\bigotimes_AA'$, extending the fixed $B'=B\bigotimes_AA'$-module structure,
  and such that the induced map of supports
$$\xymatrix{B'/\operatorname{ann}_{B'}(M\bigotimes_AA') \ra R'/\operatorname{ann}_{R'}(M\bigotimes_AA')}$$
is an isomorphism.
\end{cor}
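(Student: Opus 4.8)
The plan is to reduce, via Theorem~\ref{Weilmodule is rep}, to a comparison of the two images entering the definition \ref{defn iso image} of $\Iso{B}{R}{E}$ with the two annihilator quotients in the statement. Write $E=\operatorname{End}_A(M)$ and $M'=M\bigotimes_AA'$, $B'=B\bigotimes_AA'$, $R'=R\bigotimes_AA'$. By Theorem~\ref{Weilmodule is rep} there is a natural identification $\Weilmodule{B}{R}{M}=\NCWeil{B}{R}{E}$, so an $A'$-valued point of $\NCWeil{B}{R}{E}$ is the same datum as an $R'$-module structure on $M'$ extending the fixed $B'$-module structure; it is recorded by the $B'$-algebra homomorphism $\xi'\colon R'\ra E\bigotimes_AA'$ fitting into diagram \ref{A'valued point}, where the isomorphism $\nu\colon E\bigotimes_AA'\ra\operatorname{End}_{A'}(M')$ of diagram \ref{Thecommutativediagram} is available because $M$ is finitely generated and projective over $A$. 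It then remains to match the condition $\im(\mu\otimes 1)=\im(\xi')$ defining $\Iso{B}{R}{E}(A')$ with the condition that the induced map of supports be an isomorphism.

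First I would compute the two images, viewing both $\mu\otimes 1$ and $\xi'$ as maps into $\operatorname{End}_{A'}(M')$ via $\nu$. Composed with $\nu$, the map $\mu\otimes 1$ is simply the action of $B'$ on $M'$, so its kernel is $\operatorname{ann}_{B'}(M')$ and therefore $\im(\mu\otimes 1)\cong B'/\operatorname{ann}_{B'}(M')$; likewise $\xi'$ is the action of $R'$ on $M'$, so $\ker(\xi')=\operatorname{ann}_{R'}(M')$ and $\im(\xi')\cong R'/\operatorname{ann}_{R'}(M')$. From the commutativity of \ref{A'valued point} we have $\mu\otimes 1=\xi'\circ(f\otimes 1)$, whence $\im(\mu\otimes 1)\subseteq\im(\xi')$, and a short diagram chase shows that, read through the two isomorphisms just obtained, this inclusion is precisely the canonical morphism of support rings
$$ B'/\operatorname{ann}_{B'}(M')\longrightarrow R'/\operatorname{ann}_{R'}(M') $$
induced by $f\otimes 1$: the class of $b$ goes to the class of $f(b)$, which corresponds to $\xi'(f(b))=(\mu\otimes 1)(b)$.

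Since an inclusion of submodules is an isomorphism exactly when the two submodules coincide, the displayed support morphism is an isomorphism if and only if $\im(\mu\otimes 1)=\im(\xi')$, that is, if and only if $\xi'$ lies in $\Iso{B}{R}{E}(A')$. Together with the first paragraph this yields the claimed description of the $A'$-valued points of $\Iso{B}{R}{E}$. The only step meriting care is the identification of $\ker(\mu\otimes 1)$ and $\ker(\xi')$ with the annihilators of $M'$ in $B'$ and $R'$; this is exactly where the hypothesis that $M$ be finitely generated and projective over $A$ is used, through the isomorphism $E\bigotimes_AA'\cong\operatorname{End}_{A'}(M')$ which lets one view the common target of $\mu\otimes 1$ and $\xi'$ as the honest endomorphism ring acting on $M'$. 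The rest is formal.
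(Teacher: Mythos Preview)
Your argument is correct and follows essentially the same route as the paper: identify $E\bigotimes_AA'$ with $\operatorname{End}_{A'}(M')$ using that $M$ is finitely generated projective, so that $\ker(\mu\otimes 1)=\operatorname{ann}_{B'}(M')$ and $\ker(\xi')=\operatorname{ann}_{R'}(M')$, and then read off that $\im(\mu\otimes 1)=\im(\xi')$ is precisely the condition that the support map be an isomorphism. The paper's proof is terser and only spells out one implication explicitly, whereas you take a bit more care with the converse and with identifying the inclusion of images with the canonical map of support rings; but there is no substantive difference in method.
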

\begin{proof} As $M$ projective and
  finitely generated $A$-module, we have that
  $\operatorname{End}_A(M)\bigotimes_AA'=\operatorname{End}_{A'}(M\bigotimes_AA')$,
  for any $A$-algebra $A'$. Then, for
  a given $A$-algebra $A'$, we have that the kernel of $\mu \otimes 1 \colon
  B\bigotimes_AA' \ra E\bigotimes_AA'$ is the annihilator ideal. Thus,
 $B'/\operatorname{ann}_{B'}(M\bigotimes_AA')$ is the image of $\mu
 \otimes 1$, with $B'=B\bigotimes_AA'$. If $\xi'$ is an $A'$-valued point of $\Iso{B}{R}{E}$, then we have by definition that
$$\xymatrix{B'/\operatorname{ann}_{B'}(M\bigotimes_AA') = \im(\mu\otimes 1)=\im (\xi')=R'/\operatorname{ann}_{R'}(M\bigotimes_AA'),}$$
where $R'=R\bigotimes_AA'$.
\end{proof}

\subsection{Situation with commuative rings} With $E=\operatorname{End}_A(M)$ we have the subfunctor  $\Iso{B}{R}{E}$ of the modul restriction $\NCWeil{B}{R}{E}$.  We end this section by looking at the special situation when $M$ is not only an $A$-module, but also an $A$-algebra. Let $g\colon A\ra B$ be a homomorphism
of commutative rings, and consider $B$ as an $A$-module via this
homomorphism. We then have the commutative diagram
$$ \xymatrix{B \ar[r]^c & \operatorname{End}_A(B) \\ A\ar[u]^g
  \ar[ur]_{\mu} & }$$
where $c\colon B\ra \operatorname{End}_A(B)$ is the canonical map. We
have furthermore that the canonical map $c \colon B \ra \operatorname{End}_A(B)$
identifies $B$ with its image.

\begin{lemma}\label{module restriction rings} Let $g\colon A \ra B$
  and $f\colon B \ra R$ be homomorphisms of commutative rings. We have that if $B\ra B'$ is an
  $A$-algebra homomorphism, and we let $E=\operatorname{End}_A(B')$,
  then we have equality of functors $\NCWeil{B}{R}{E}=\Iso{B}{R}{E}$.
\end{lemma}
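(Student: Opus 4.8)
The plan is to verify the claimed equality of functors on $A'$-points, one $A$-algebra $A'$ at a time. Since $\Iso{B}{R}{E}(A')\subseteq\NCWeil{B}{R}{E}(A')$ holds by construction, it is enough to fix a point $\xi'\in\NCWeil{B}{R}{E}(A')$ and prove $\im(\xi')=\im(\mu\otimes1)$, where $\mu\colon B\to E=\operatorname{End}_A(B')$ is the $A$-algebra homomorphism encoding the fixed $B$-module structure on $B'$. First I would pass to the associated $B\bigotimes_AA'$-algebra homomorphism $\bar\xi'\colon R\bigotimes_AA'\to E\bigotimes_AA'$; then the inclusion $\im(\mu\otimes1)\subseteq\im(\bar\xi')$ is automatic because $\bar\xi'$ restricts to $\mu\otimes1$ on $B\bigotimes_AA'$. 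So the entire content is the reverse inclusion $\im(\bar\xi')\subseteq\im(\mu\otimes1)$.

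To get this I would combine commutativity of $R$ with an identification of a centraliser. Since $R\bigotimes_AA'$ is commutative, $\im(\bar\xi')$ is a commutative subalgebra of $E\bigotimes_AA'$ containing $\im(\mu\otimes1)$, hence it lies inside the centraliser of $\im(\mu\otimes1)$ in $E\bigotimes_AA'$; so it suffices to show that this centraliser is $\im(\mu\otimes1)$ itself. The key step is the computation of that centraliser. Here I would use that $\mu$ is the composite of $B\to B'$ with the canonical (injective) multiplication map $B'\hookrightarrow\operatorname{End}_A(B')$ — which, as in the paragraph just before the lemma, identifies $B'$ with its image — so that $\im(\mu)$ is exactly the subalgebra of $\operatorname{End}_A(B')$ of multiplication operators attached to $B'$. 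Since $B'$ is generated by $1$ as a module over itself, an $A$-linear endomorphism of $B'$ commutes with all these multiplications precisely when it is $B'$-linear, i.e.\ when it is again such a multiplication; thus $\im(\mu)$ is its own centraliser in $\operatorname{End}_A(B')$, hence a maximal commutative subalgebra. Because $B'$ is finitely generated and projective over $A$, the canonical map $\operatorname{End}_A(B')\bigotimes_AA'\to\operatorname{End}_{A'}(B'\bigotimes_AA')$ is an isomorphism (\cite[4.3. Proposition 7]{Bourbaki_Alg1_3}), carrying $\im(\mu\otimes1)$ to the multiplication operators of the $A'$-algebra $B'\bigotimes_AA'$; the same cyclic-module argument, now over $A'$, shows this subalgebra is its own centraliser in $E\bigotimes_AA'$. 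Combining, $\im(\bar\xi')=\im(\mu\otimes1)$, so $\xi'\in\Iso{B}{R}{E}(A')$ and the two functors coincide.

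The step I expect to be the main obstacle is this base change: one must make sure the description of $\im(\mu)$ as the self-centralising algebra of multiplication operators of $B'$ survives $-\bigotimes_AA'$, and it is precisely here that finite generation and projectivity of $B'$ over $A$ are genuinely used, so that forming endomorphism rings commutes with the base change. Once that is in place, the rest of the argument is a formal manipulation of images, commutativity, and centralisers.
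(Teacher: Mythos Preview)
Your centraliser approach and the paper's terse ``evaluate at $1$'' argument rest on the same idea: since $R$ is commutative, each $\xi'(x)$ commutes with every $\mu(b)$, hence is linear over the image of $B$ in $B'$, and one wants to conclude it is multiplication by $\xi'(x)(1)\in B'$.

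The gap is in your sentence ``$\im(\mu)$ is exactly the subalgebra of $\operatorname{End}_A(B')$ of multiplication operators attached to $B'$.'' The map $\mu$ is the composite $B\to B'\xrightarrow{\,c\,}\operatorname{End}_A(B')$, so $\im(\mu)=c(\overline B)$ with $\overline B=\operatorname{im}(B\to B')$; this equals $c(B')$ only when $B\to B'$ is surjective. Without that, the centraliser of $\im(\mu)$ is $\operatorname{End}_{\overline B}(B')$, in general strictly larger than $c(B')$, and indeed the assertion itself fails: with $A=B=k$ a field, $B'=k[t]/(t^2)$, $R=k[x]$, and $\xi(x)=\left(\begin{smallmatrix}0&1\\0&0\end{smallmatrix}\right)\in\operatorname{End}_k(B')$, one has $\xi\in\NCWeil{B}{R}{E}(k)$ but $\im(\xi)\ne\im(\mu)=k$. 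The paper's own proof is written with $B$ in place of $B'$ throughout, i.e.\ it tacitly treats the case where $B\to B'$ is onto, which is also the case used in the Hilbert-functor application. Under that extra hypothesis your argument is correct and essentially coincides with the paper's; and in that case the ``evaluate at $1$'' version shows that no projectivity of $B'$ over $A$ is needed for the equality of \emph{functors}, so the base-change issue you flag as the main obstacle does not actually arise.
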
 
\begin{proof} Any $B$-linear
  algebra homomorphism $\xi \colon R \ra \operatorname{End}_A(B)$
  will factorise via the inclusion $B\subseteq
  \operatorname{End}_A(B)$. For any $x\in R$ we have that $\xi(x)=\xi_x$ is an
  endomorphism on $B$. We identify $\xi_x$ with its evaluation on the
  unit.
\end{proof}

\section{Representability of the Quot functor}

\subsection{Isomorphic support} Let $X\ra S$ be separated morphism of
algebraic spaces, and let $f\colon Y \ra X$ be a morphism of
$S$-spaces. For each scheme $T$, and for any element ${\calE}$ in
$\Coh{Y}(T)$ we have an induced map on supports
\begin{equation}\label{induced map of support}
 f_{T |} \colon \mathrm{Supp}({\calE}) \ra \mathrm{Supp}( f_*{\calE}).
\end{equation}
Let 
$$ {\stackU}_{Y\to X} \subset \Coh{Y}$$
denote the substack, whose objects are ${\calO}_{Y_T}$-modules
${\calE} \in \Coh{Y}(T)$ such that the induced map on supports is
an closed immersion.

\begin{prop}\label{open substack} Let $X\ra S$ be a separated map of algebraic spaces. Let $f\colon Y \ra X$ be a
  $S$-morphism that is affine and of finite type. Then induced map of stacks 
$$ {\stackU}_{Y\to X} \ra \Coh{Y}$$
is a representable open
immersion. If furthermore, $f\colon Y \ra X$ is \'etale (smooth), then the induced composite map
$$ {\stackU}_{Y\to X} \ra \Coh{Y} \ra \Coh{X}$$
is \'etale (respectively smooth).
\end{prop}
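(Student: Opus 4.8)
The plan is to reduce everything to the affine, local situation and then apply the results already established about $\Iso{B}{R}{E}$. First I would note that both assertions can be checked after base change to an affine scheme $S$, using the base-change property \ref{base change for stacks}, and that, by Lemma \ref{glue} together with the Zariski-local nature of the constructions, it suffices to verify the claims on fibres over $T$-valued points of $\Coh{X}$ with $T$ affine. Thus we are reduced to the situation $S=\Spec(A)$, $X=\Spec(B)$, $f^{-1}(\Spec(B))=\Spec(R)$ for the relevant affine chart, and a fixed $\calE$ on $X$ corresponding to a $B$-module $M$ that is finitely generated and projective as an $A$-module. Setting $E=\operatorname{End}_A(M)$, Theorem \ref{Weilmodule is rep} identifies the fibre of $f_*$ with $\NCWeil{B}{R}{E}$, and Corollary \ref{openessofiso} identifies the $A'$-points of $\Iso{B}{R}{E}$ precisely with those $R'$-module structures on $M\otimes_AA'$ for which the induced map of supports $B'/\operatorname{ann}_{B'}(M') \ra R'/\operatorname{ann}_{R'}(M')$ is an isomorphism; since $\operatorname{Supp}$ is defined by the annihilator ideal, this is exactly the closed-immersion condition defining ${\stackU}_{Y\to X}$. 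Hence, on fibres, the inclusion ${\stackU}_{Y\to X} \ra \Coh{Y}$ becomes the inclusion $\Iso{B}{R}{E} \subseteq \NCWeil{B}{R}{E}$, which by Proposition \ref{Isorepresentability} is a representable open immersion (here the finite-type hypothesis on $f$ is used). This gives the first assertion.

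For the second assertion I would combine this with the structure result for the composite. The composite ${\stackU}_{Y\to X} \ra \Coh{Y} \ra \Coh{X}$, restricted to the fibre over $\calE$, is the structure morphism $\Spec$ of the scheme representing $\Iso{B}{R}{E}$ down to $\Spec(A)$. By Proposition \ref{Iso = Weil} this scheme is the Weil restriction $\Weil{B_H}{R_H}$, where $H$ represents $\NCWeil{B}{R}{E}$, $B_H$ is the image of $\mu\otimes 1$, and $R_H = R\otimes_B B_H$. If $f\colon B \ra R$ is \'etale (smooth), then so is the base change $B_H \ra R_H$, and Proposition \ref{gens about Weil} then tells us that $\Weil{B_H}{R_H}$ satisfies the infinitesimal lifting property for \'etaleness (smoothness) and, since $f$ is of finite presentation in the \'etale case, is also of finite presentation; combined with representability this yields \'etaleness (smoothness) of the composite over $\Coh{X}$. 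Equivalently, one invokes Corollary \ref{etaleness of iso} directly, which packages exactly this deduction. Assembling the fibrewise statements via the reduction steps gives the global conclusion.

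The main obstacle I anticipate is the bookkeeping in the reduction to the affine local situation: one must check that ${\stackU}_{Y\to X}$ really is a substack (i.e. that the isomorphic-support condition is stable under the isomorphisms that are the morphisms in $\Coh{Y}(T)$, which follows because an isomorphism $\calE \simeq \calF$ induces an isomorphism of annihilator ideals and hence of supports, compatibly with $f_*$), and that the various open subschemes produced on affine charts of $X$ glue to give a well-defined open substack of $\Coh{Y}$ independent of the chart. The gluing compatibility is precisely the content of the second part of Lemma \ref{glue}, applied to the elements $x$ cutting out the affine charts of $X$ pulled back to $Y$, so this is available; the remaining care is simply to phrase the fibre identifications functorially so that they are compatible with localisation, exactly as in the proof of Theorem \ref{affine push forward}. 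Beyond this, every ingredient — the identification of fibres with module restrictions, the openness of $\Iso{B}{R}{E}$, its coincidence with a Weil restriction, and the inheritance of \'etaleness/smoothness — is already established in the excerpt, so the proof is essentially an assembly.
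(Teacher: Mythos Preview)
Your proposal is correct and follows essentially the same approach as the paper: the paper's own proof consists of a single sentence citing Proposition \ref{Isorepresentability} and Corollary \ref{etaleness of iso}, leaving the reduction to the affine situation implicit (it was already carried out in Theorem \ref{affine push forward} and Corollary \ref{relatively representable}). Your write-up simply unpacks those reductions and the identification of fibres with $\Iso{B}{R}{E}$ via Corollary \ref{openessofiso}, which is exactly the intended content behind the paper's terse citation.
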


\begin{proof} The results follows 
  by Proposition \ref{Isorepresentability} and Corollary \ref{etaleness of iso}.
\end{proof}



\subsection{The Quot stack} Fix a quasi-coherent sheaf $F_X$ on an
algebraic space $X\ra S$. For any $S$-scheme $T$ we let $F_{X_T
}$ denote the pull-back of $F_X$ along the first projection
$p_X\colon X\times_ST \ra X$. The $T$-valued points of the quot stack
$\stackquot ^n_{F_X/S}$ are all ${\calO}_{X\times_ST}$-module morphisms
$q \colon F_{X_T} \ra \calE$, from $F_{X_T}$ to a
quasi-coherent sheaf $\calE$ on $X\times_S T$, where $\calE$ is flat, finite of relative rank $n$
over $T$. A morphism between two objects $q\colon F_{X_T}\ra \calE$
and $q'\colon F_{X_T}\ra \calE'$ is an ${\calO}_{X\times_ST}$-module
isomorphism $\varphi \colon \calE\ra \calE'$ such that $q'=\varphi
\circ q$. 

\begin{rem}Note that the maps $q\colon F_{X_T} \ra \calE$ are not assumed to
be surjective, and in particular the $T$-valued points of the quot
stack $\stackquot ^n_{F_X/S}$ are not quotients of $F_X$. The definition of the quot stack is motivated by the
definition of the Hilbert stack in \cite{RydhHilb},  and
in \cite{Olsson}.  
\end{rem}

\subsection{Identification of pull-backs} We will in the sequel of this article return to
a particular situation that we describe below. Let $T\ra S$ be a
morphism. Then we have the following Cartesian diagram
\eqbeg\label{notation cartesian}\xymatrix{ Y\times_ST \ar[r]^{f_T} \ar[d]^{p_Y} & X\times_ST
  \ar[d]^{p_X} \\
Y \ar[r]^f & X.}
\eqend
For any sheaf $F_X$ on $X$ there is a canonical identification between
the two sheaves
$f_T^*F_{X_T}=f_T^*p_X^*F_X$ and $p_Y^*f^*F_X$. We  will denote both these two 
sheaves with $F_{Y_T}$. 
We have, furthermore, a natural map
$$ \xymatrix{c \colon \stackquot^n_{F_X/S} \ar[r] & {\mathscr C}oh^n_{X/S}}$$
that takes a $T$-valued point of the
quotient stack $q\colon F_{X_T} \ra \calE$ to the sheaf
$\calE$. 

\begin{lemma}\label{cartesian} Let $X\ra S$ be a separated map of algebraic spaces, and
  let $f\colon Y \ra X$ be a morphism of $S$-spaces. For any
  quasi-coherent sheaf $F_X$ on $X$, we have the
  Cartesian diagram
$$\xymatrix{ \stackquot^n_{F_X/S} \ar[r]^c & {\mathscr C}oh_{X/S}^n \\
  \stackquot^n_{F_Y/S} \ar[u]^{f_*}\ar[r]^c &
  {\mathscr C}oh_{Y/S}^n \ar[u]^{f_{*}}. }
$$
\end{lemma}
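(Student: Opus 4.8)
The goal is to show that the square

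\[
\xymatrix{ \stackquot^n_{F_X/S} \ar[r]^c & {\mathscr C}oh_{X/S}^n \\
  \stackquot^n_{F_Y/S} \ar[u]^{f_*}\ar[r]^c &
  {\mathscr C}oh_{Y/S}^n \ar[u]^{f_{*}} }
\]
is $2$-Cartesian, i.e.\ that the natural functor $\stackquot^n_{F_Y/S} \to \stackquot^n_{F_X/S}\times_{{\mathscr C}oh^n_{X/S}}{\mathscr C}oh^n_{Y/S}$ is an equivalence of stacks over the category of $S$-schemes. I would check this on $T$-valued points for an arbitrary $S$-scheme $T$, verifying essential surjectivity and full faithfulness of the induced functor on groupoids. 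The main input is the elementary adjunction/pull-back identity: for $f_T\colon Y\times_ST\to X\times_ST$ affine and any coherent $\calE$ on $Y\times_ST$ with support finite over $T$, giving an ${\calO}_{X\times_ST}$-module map $F_{X_T}\to f_{T*}\calE$ is the same as giving an ${\calO}_{Y\times_ST}$-module map $f_T^*F_{X_T}\to \calE$, because $f_T$ is affine so $f_{T*}$ is exact and $\Hom_{{\calO}_{X_T}}(F_{X_T}, f_{T*}\calE)=\Hom_{{\calO}_{Y_T}}(f_T^*F_{X_T},\calE)$ by the $(f_T^*,f_{T*})$-adjunction; and by \ref{notation cartesian} and the paragraph on identification of pull-backs, $f_T^*F_{X_T}=F_{Y_T}$.

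First I would unwind the fiber product on $T$-points: an object is a triple $(q_X\colon F_{X_T}\to \calE_X,\ \calF_Y\in{\mathscr C}oh^n_{Y/S}(T),\ \theta\colon \calE_X \xrightarrow{\ \sim\ } f_{T*}\calF_Y)$, where $\calE_X\in{\mathscr C}oh^n_{X/S}(T)$; the requirement that $\calF_Y$ and the support hypotheses are as in Lemma \ref{push forward map} ensures $f_{T*}\calF_Y$ indeed lands in ${\mathscr C}oh^n_{X/S}(T)$, using that $X\to S$ is separated and $f$ affine. Given such a triple, composing $q_X$ with $\theta$ gives $F_{X_T}\to f_{T*}\calF_Y$, which under the adjunction above corresponds to a unique $q_Y\colon F_{Y_T}\to \calF_Y$; this defines the functor backwards and shows essential surjectivity, while a diagram chase shows the construction is inverse (up to canonical $2$-isomorphism) to sending $(q_Y\colon F_{Y_T}\to\calF_Y)$ to $(f_{T*}q_Y\colon F_{X_T}\to f_{T*}\calF_Y,\ \calF_Y,\ \id)$. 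For full faithfulness, a morphism of triples is an isomorphism $\varphi_Y\colon \calF_Y\to\calF_Y'$ with $\theta'\circ q_X=f_{T*}\varphi_Y\circ\theta\circ q_X$, and since $f_{T*}$ is fully faithful on coherent sheaves with support finite (hence affine) over the affine-over-$T$ scheme $X_T$—again using $f$ affine—this matches exactly a morphism $\calF_Y\to\calF_Y'$ in $\stackquot^n_{F_Y/S}(T)$ compatible with $q_Y,q_Y'$.

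The step I expect to be the main obstacle is bookkeeping the compatibilities: one must verify that $f_{T*}$ restricted to sheaves finite over $T$ is fully faithful (this follows because $f_T$ restricted to the support is a closed immersion after a finite base change, or more directly because $f_T$ affine gives $f_{T*}\calF\otimes_{{\calO}_{X_T}}{\calO}_{Y_T}\cong\calF$ on the relevant loci), and that all the identifications $f_T^*F_{X_T}=F_{Y_T}$ are functorial in $T$ so that the equivalence is genuinely a $2$-Cartesian square of stacks, not just a fiberwise bijection. I would also remark that the whole argument is compatible with the base-change isomorphism \ref{base change for stacks}, so there is no loss in taking $T$ affine and $T=S$ when convenient, reducing everything to the module-theoretic adjunction $\Hom_B(F_B, R\text{-module }N\text{ as }B\text{-module})=\Hom_R(F_B\otimes_BR, N)$ for $B\to R$ and a $B$-module $F_B$, which is the purely algebraic shadow of the statement.
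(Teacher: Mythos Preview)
Your approach is essentially the paper's: construct the left vertical map via the unit of the $(f_T^*,f_{T*})$-adjunction, then observe that the Cartesian property is a formal consequence of that same adjunction. The paper simply writes ``the proof is a formal consequence of adjunction'' after defining $f_*$, and your detailed unwinding is exactly what that sentence abbreviates.

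Two points where you overcomplicate things. First, you repeatedly invoke that $f$ is affine, but the lemma does not assume this; the adjunction $\mathrm{Hom}_{{\calO}_{X_T}}(F_{X_T},f_{T*}\calF)\cong \mathrm{Hom}_{{\calO}_{Y_T}}(f_T^*F_{X_T},\calF)$ holds for an arbitrary morphism of algebraic spaces, and that is all you use. (What you do need, to know that $f_{T*}\calF$ lands in ${\mathscr C}oh^n_{X/S}$, is only that the restriction of $f_T$ to $\mathrm{Supp}(\calF)$ is finite, which comes from $X\to S$ separated as in Lemma~\ref{push forward map}.) Second, your worry that one must check ``$f_{T*}$ is fully faithful on sheaves with finite support'' is a red herring. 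Full faithfulness of the comparison functor follows directly from the adjunction bijection: a morphism $\varphi_Y\colon \calF_Y\to\calF_Y'$ satisfies $\varphi_Y\circ q_Y=q_Y'$ if and only if, under adjunction, $f_{T*}\varphi_Y\circ(\theta\circ q_X)=\theta'\circ q_X'$, which is exactly the compatibility condition in the fibre product. No faithfulness of $f_{T*}$ itself is needed, and indeed it would generally fail.
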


\begin{proof} We first establish the map $f_*$  from $\stackquot ^n_{F_Y/S}$
  to $\stackquot ^n_{F_X/S}$. Let $q\colon F_{Y_T} \ra \calE$ be a $T$-valued
  point of $\stackquot ^n_{F_Y/S}$. The canonical map
  $F_{X_T} \ra f_{T*}f^{*}_T F_{X_T}$, where we use the notation of
  \ref{notation cartesian}, 
combined with the
  identification $f_T^*F_{X_T}=F_{Y_T}$, gives the composition
\begin{equation}\label{push forward}
\xymatrix{F_{X_T} \ar[r] &  f_{T*}f_T^*F_{X_T}=f_{T*}F_{Y_T} \ar[r] &
  f_T^*\calE}.
\end{equation}
By Lemma \ref{push forward map} the above sequence is a $T$-valued
point of $\stackquot ^n_{F_X/S}$. Now as we have have the map,  the
proof is  a formal consequence of adjunction.
\end{proof}

\subsection{The Quot functor} Let $X \ra S$ be a morphism of algebraic
spaces, and let $F_X$ be a quasi-coherent sheaf on $X$. The Quot functor 
$\Quot{X}{F}$ defined by Grothendieck (\cite{FGAIV}, \cite{Artinformalmoduli}) is the functor that to each $S$-scheme $T\ra S$ assigns
the set of surjective ${\calO}_{X_T}$-module maps $q \colon F_{X_T}
\ra \calE$, where $\calE$ is finite, flat and of relative rank
$n$. Two surjective maps $q\colon F_{X_T}\ra \calE$ and
$q'\colon F_{X_T}\ra \calE'$ are considered as equal if their
kernels coincide as subsheaves of $F_{X_T}$.

\subsection{} Let $q\colon F_{X_T}\ra \calE$ be a $T$-valued
point of $\Quot{X}{F}$. We define
$$ \iota (\calE) = {\calO}_{X\times_ST}/\mathrm{ker}q.$$
This determines a map  $\iota\colon \Quot{X}{F} \ra
\stackquot^n_{F_X,S}$.

\subsection{Subfunctors} We will identify two subfunctors of the Quot
functor, and then in the next lemma relate these subfunctors with the
stacks introduced earlier. Let $f\colon Y \ra X$ be a morphism of
$S$-spaces, with $X\ra S$ separated, and let $F_X$ be a quasi-coherent
sheaf on $X$. We want to consider the two following subfunctors 
$$\Omega^F_{Y\to X} \subseteq \omega^F_{Y\to X} \subseteq \Quot{Y}{F}.$$
We define $\omega^F_{Y\to X}$ as the subfunctor of
$\Quot{Y}{F}$ whose $T$-valued points are surjective
${\calO}_{Y_T}$-module maps $q\colon
F_{Y_T} \ra \calE$, where $\calE$ is finite, flat of rank $n$ over
$T$, such that the induced
map of ${\calO}_{X_T}$-modules
$$ \xymatrix{f_*(q) \colon F_{X_T} \ar[r] & f_{T*}\calE}$$ 
is surjective, where $f_*$ is the push forward map \ref{push forward}.
And we define $\Omega^F_{Y\to X}$ with the further requirement that the
induced map of supports (\ref{induced map of support}) is a closed immersion.

\begin{lemma}\label{open} Let $X\ra S$ be a separated map of algebraic
  spaces, and let $f\colon Y\ra X$ be a map of $S$-spaces. For any
  quasi-coherent sheaf $F_X$ on $X$ we have that $\omega_{Y\to X}^{F}$ is an open subfunctor
  of $\Quot{Y}{F}$. Moreover, we have the following Cartesian
  diagrams
$$\xymatrix{ \stackU _{Y\to X} \ar[r] &  \Coh{Y} \ar[r]^{f_*} &  \Coh{X}\\
 & \stackquot^n_{F_Y/S} \ar[r]^{f_*}\ar[u]^c &
  \stackquot^n_{F_X/S} \ar[u]^c \\
\Omega_{Y\to X}^F \ar[r] \ar[uu] & \omega_{Y\to X}^F \ar[r] \ar[u] & \Quot{X}{F}\ar[u]^{i}
}$$

In particular we have that
\begin{enumerate}
\item If the morphism $f\colon Y \ra X$ is  affine, then the morphism 
$\stackquot^n_{F_Y/S} \ra \stackquot^n_{F_X/S}$ is schematically
representable. 
\item If  $f\colon Y \ra X$ is affine and of finite
type, then $\Omega^F_{Y\to X}$ is an open subfunctor of
$\Quot{Y}{F}$, and  $\Omega^F_{Y\to X} \ra \Quot{X}{F}$ is schematically
representable.
\item Finally, if $f\colon Y \ra X$ is affine and \'etale,
then $$\Omega^F_{Y\to X} \ra \Quot{X}{F}$$
is a schematically representable, \'etale morphism.
\end{enumerate}
\end{lemma}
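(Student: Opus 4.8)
The plan is to reduce everything to the affine local statements already proved. First I would establish that $\omega^F_{Y\to X}$ is an open subfunctor of $\underline{\mathrm{Quot}}^n_{F_Y/S}$: for a $T$-valued point $q\colon F_{Y_T}\ra\calE$ the condition that $f_*(q)\colon F_{X_T}\ra f_{T*}\calE$ be surjective is equivalent to the vanishing of the cokernel, which has support closed in $X\times_ST$; since $\calE$ has finite (hence proper) support over $T$, so does the cokernel, and so the locus in $T$ over which $f_*(q)$ is surjective is open — this is the standard argument (e.g.\ as in \cite{FGAIV}) that makes $\omega^F_{Y\to X}\subseteq\underline{\mathrm{Quot}}^n_{F_Y/S}$ open. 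The upper two squares of the displayed diagram are essentially definitional: the top square is Lemma \ref{cartesian} restricted to $\stackU_{Y\to X}\subseteq\Coh{Y}$, and the middle square is again Lemma \ref{cartesian}. The content of the lemma is thus in checking that the two lower squares are Cartesian, i.e.\ that $\omega^F_{Y\to X}$ is exactly the fibre product $\stackquot^n_{F_Y/S}\times_{\stackquot^n_{F_X/S}}\underline{\mathrm{Quot}}^n_{F_X/S}$ and that $\Omega^F_{Y\to X}$ is exactly its intersection with (the preimage of) $\stackU_{Y\to X}$.

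Next I would unwind the fibre product over $\stackquot^n_{F_X/S}$. A $T$-valued point of $\stackquot^n_{F_Y/S}\times_{\stackquot^n_{F_X/S}}\underline{\mathrm{Quot}}^n_{F_X/S}$ is a triple consisting of $q\colon F_{Y_T}\ra\calE$ in $\stackquot^n_{F_Y/S}$, a surjection $q''\colon F_{X_T}\ra\calE''$ in $\underline{\mathrm{Quot}}^n_{F_X/S}$, and an isomorphism $\varphi\colon f_{T*}\calE\ra\calE''$ identifying $f_*(q)$ with $q''$ via $\iota$. Since $q''$ is surjective and $\iota(\calE'')=\calO_{X_T}/\ker q''$, the existence of such $\varphi$ forces $f_*(q)$ to be surjective, so the triple determines, and is determined by, a point of $\omega^F_{Y\to X}$ (the isomorphism $\varphi$ being the unique one induced by surjectivity). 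Running the same argument with the additional support condition defining $\stackU_{Y\to X}$ identifies the pullback of $\stackU_{Y\to X}$ with $\Omega^F_{Y\to X}$; in particular $\Omega^F_{Y\to X}$ is the intersection of $\omega^F_{Y\to X}$ with an open substack, hence open in $\underline{\mathrm{Quot}}^n_{F_Y/S}$.

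The three numbered consequences then follow by base change along the Cartesian squares. For (1), affineness of $f$ together with Theorem \ref{affine push forward} (more precisely Corollary \ref{relatively representable}, using that $X\ra S$ separated is automatic when $f$ is affine and $X$, $S$ are as given, and that $X\ra S$ separated is part of the hypotheses in the ambient setup) gives schematic representability of $f_*\colon\Coh{Y}\ra\Coh{X}$, and then the middle square shows $f_*\colon\stackquot^n_{F_Y/S}\ra\stackquot^n_{F_X/S}$ is schematically representable as a base change. For (2), the map $\Omega^F_{Y\to X}\ra\underline{\mathrm{Quot}}^n_{F_X/S}$ factors through $\stackU_{Y\to X}\ra\Coh{X}$ after pulling back along $\stackquot^n_{F_X/S}\ra\Coh{X}$ and $\underline{\mathrm{Quot}}^n_{F_X/S}\ra\stackquot^n_{F_X/S}$; finite type of $f$ lets us invoke Proposition \ref{open substack} to get that $\stackU_{Y\to X}\ra\Coh{X}$ is representable, and hence so is $\Omega^F_{Y\to X}\ra\underline{\mathrm{Quot}}^n_{F_X/S}$. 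For (3), when $f$ is in addition \'etale, Proposition \ref{open substack} says $\stackU_{Y\to X}\ra\Coh{X}$ is \'etale, and \'etaleness is stable under base change, so $\Omega^F_{Y\to X}\ra\underline{\mathrm{Quot}}^n_{F_X/S}$ is \'etale.

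The main obstacle I expect is the bookkeeping in verifying the two lower squares are Cartesian — specifically, being careful that $\iota\colon\underline{\mathrm{Quot}}^n_{F_X/S}\ra\stackquot^n_{F_X/S}$ together with the surjectivity of $q''$ rigidifies the would-be automorphisms, so that the groupoid fibre product collapses to an honest set-valued subfunctor of $\underline{\mathrm{Quot}}^n_{F_Y/S}$. Once that rigidity is in place, everything else is formal base change along the established representability and \'etaleness statements.
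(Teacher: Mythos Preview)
Your overall strategy matches the paper's, and the derivation of (1)--(3) from the Cartesian diagrams via Corollary~\ref{relatively representable} and Proposition~\ref{open substack} is correct. However, there is a genuine gap in your verification that the lower-right square is Cartesian.

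You take a point of the fibre product $\stackquot^n_{F_Y/S}\times_{\stackquot^n_{F_X/S}}\Quot{X}{F}$, namely $(q,q'',\varphi)$ with $q\colon F_{Y_T}\ra\calE$ in $\stackquot^n_{F_Y/S}$, and observe that the isomorphism $\varphi$ forces $f_*(q)$ to be surjective. That is fine. But you then assert that this ``determines, and is determined by, a point of $\omega^F_{Y\to X}$''. This does not follow: $\omega^F_{Y\to X}$ is by definition a subfunctor of $\Quot{Y}{F}$, so its points are \emph{surjective} maps $F_{Y_T}\ra\calE$. Your $q$ lies only in $\stackquot^n_{F_Y/S}$, where no surjectivity is imposed, and nothing you have said forces $q$ itself to be surjective. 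The ``rigidity'' issue you flag at the end is not the obstacle; the missing step is producing a surjective representative on $Y$.

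The paper closes this gap as follows. Given $(q_X,s,\psi)$ in the fibre product, with $q_X\colon F_{X_T}\ra\calE$ surjective and $\psi\colon\calE\ra f_{T*}\calF$ an isomorphism, one forms
\[
q_Y\colon F_{Y_T}=f_T^*F_{X_T}\xrightarrow{f_T^*(q_X)} f_T^*\calE \xrightarrow{\psi^\sharp}\calF,
\]
where $\psi^\sharp$ is the adjoint of $\psi$. Surjectivity of $q_Y$ can be checked on $\mathrm{Supp}(\calF)$; since this support is finite over $T$ and $X\ra S$ is separated, the restriction of $f_T$ to it is finite, hence affine. Over an affine map, surjectivity of $\psi$ implies surjectivity of its adjoint $\psi^\sharp$, and $f_T^*(q_X)$ is surjective because $q_X$ is. Thus $q_Y$ is a $T$-point of $\Quot{Y}{F}$, and then of $\omega^F_{Y\to X}$, isomorphic to the given fibre-product point. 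Without this argument the essential surjectivity of your comparison map fails.
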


\begin{proof} We first show openness of $\omega_{Y\to X}^F$. Let $q\colon F_{Y_T} \ra  \calE$ be a $T$-valued point
  of $\omega^F_{Y\to X}$. By Lemma \ref{push forward map} the sheaf
  $f_{T*}\calE$ is quasi-coherent on $X\times_ST$. Furthermore, since $f_{T*}\calE$
  is finite over the base it follows that surjectivity of the map 
  $f_*(q)\colon F_{X_T} \ra f_{T*}\calE$ is an open condition on the
  base, proving the first claim. 

About the Cartesian diagrams. The upper right diagram is Cartesian by
Lemma \ref{cartesian}. We consider the lower right diagram.

We start by noting that there is a natural
map $\alpha \colon \omega^F_{Y\to X} \ra P$, where $P$ is the fibre
product in question. If $q\colon F_{Y_T} \ra \calE$ is a
$T$-valued point of $\omega^F_{Y\to X}$, then by assumption $f_*(q)
\colon F_{X_T} \ra f_{T*}\calE$ is surjective. So $\alpha(q)=(f_*(q),
i(q), \id)$. The map $\alpha$ is full and faithful.

Let $(q_X, s, \psi)$ be a $T$-valued point of the fiber product $P$,
where $q_X \colon F_{X_T} \ra \calE$ is surjective, $s\colon
F_{Y_T} \ra \calF$ is a map of ${\calO}_{Y\times_ST}$-modules, $\calE$ and $\calF$ are finite, flat
of relative rank $n$ over the base, and where $\psi $ is an
isomorphism of ${\calO}_{X\times_ST}$-modules, 
making the commutative diagram
$$ \xymatrix{ f_{T*}f_T^*F_{X_T}=f_{T*}F_{Y_T} \ar[r] & f_{T*}\calF \\
F_{X_T} \ar[r]^{q_X} \ar[u] & \calE \ar[u]^{\psi}. }
$$
Consider now the pull-back $f_T^*(q_X)$ composed with the adjoint of
$\psi$, 
$$\xymatrix{ q_Y \colon F_{Y_T}=f_T^*F_{X_T} \ar[r] &f^*_T\calE \ar[r]
  &\calF.}$$
We claim that the $\calO_{Y\times_ST}$-module map $q_Y$ is
surjective. To see this we may
restrict ourselves to the support $\operatorname{Supp}(\calF)\subseteq
Y\times_ST$ of $\calF$. The restriction of $f \colon
Y\times_ST \ra X\times_ST$ to the support of $\calF$ is finite
(\cite[Proposition 6.15]{EGAII}), and  in
particular affine. It is then clear that since $\psi \colon \calE \ra f_{T*}\calF$ is an
isomorphism, and then in  particular surjective, the adjoint map $f^*\calE
\ra \calF$ is also surjective. Since $q_X$ is surjective by
assumption, so is its pull-back $f_T^*(q)$. Thus, the map $q_Y$ is the
composition of two surjective maps, and therefore 
surjective. So $q_Y$ is consequently a $T$-valued point of
$\Quot{Y}{F}$. Moreover, since $\psi$ is an isomorphism it follows that $q_Y$ is
a $T$-valued point of $\omega^F_{Y\to X}$. We then have that
$f_*(q_Y)$ is isomorphic to $(q_X, s, \psi)$, and thus that $\alpha$
is essentially surjective. The leftmost diagram is proven to be
Cartesian in a similar way.
\end{proof}

\begin{thm} Let $X\ra S$ be a separated morphism of algebraic spaces, and let $F_X$ be a quasi-coherent sheaf on $X$. For each integer
  $n$, the functor $\Quot{X}{F}$ is representable by a separated algebraic
  space.
\end{thm}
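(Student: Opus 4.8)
The plan is to reduce the global statement to the local, affine situation already handled by the machinery built in the earlier sections, and then use étale descent for algebraic spaces. First I would observe that the question is local on $S$, so by the isomorphism of stacks \eqref{base change for stacks} we may assume $S=\Spec(A)$ is affine. Since $X\ra S$ is a separated algebraic space, choose an étale covering $Y\ra X$ with $Y=\coprod_i \Spec(R_i)$ a disjoint union of affines; replacing $Y$ by this cover, the morphism $f\colon Y\ra X$ is affine (being a map from an affine scheme to a separated algebraic space, by the standard argument that the graph is a closed immersion) and étale, and in particular of finite type.

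Next I would invoke Lemma \ref{open}, part (3): for $f\colon Y\ra X$ affine and étale, the morphism
$$ \Omega^F_{Y\to X} \ra \Quot{X}{F} $$
is schematically representable and étale. To conclude that the target is an algebraic space it then suffices to check two things: that $\Omega^F_{Y\to X}$ is (representable by) a scheme or algebraic space, and that $\Omega^F_{Y\to X}\ra \Quot{X}{F}$ is an étale surjection, so that it exhibits $\Quot{X}{F}$ as an algebraic space via the quotient by the étale equivalence relation $\Omega^F_{Y\to X}\times_{\Quot{X}{F}}\Omega^F_{Y\to X}\rightrightarrows \Omega^F_{Y\to X}$. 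Representability of $\Omega^F_{Y\to X}$: by the Cartesian diagrams of Lemma \ref{open}, $\Omega^F_{Y\to X}$ is the fibre product of the ordinary Quot functor $\Quot{X}{F}$ with the open substack $\stackU_{Y\to X}$ over $\Coh{X}$; since $Y$ is a disjoint union of affine schemes, $\Omega^F_{Y\to X}$ reduces to the affine/projective situation where $\underline{\operatorname{Quot}}$ over an affine is a scheme (or: one applies the representability of $\Weilmodule{B}{R}{M}$ from Theorem \ref{Weilmodule is rep} fibrewise, together with the gluing Lemma \ref{glue}, to see directly that $\Omega^F_{Y\to X}$ is a scheme). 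Surjectivity of $\Omega^F_{Y\to X}\ra \Quot{X}{F}$: given any $T$-valued point $q\colon F_{X_T}\ra \calE$ of $\Quot{X}{F}$, its support $Z=\operatorname{Supp}(\calE)$ is finite over $T$; after an étale base change on $T$ refining the étale cover $Y\ra X$ over the finitely many points of $Z$, one can split $Z$ so that $Y\times_X Z\ra Z$ has a section on each component, and this produces a $T$-valued point of $\Omega^F_{Y\to X}$ mapping to $q$. This is essentially the classical argument that the étale-local sections of a finite flat family can be separated.

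Finally I would assemble these: $\Omega^F_{Y\to X}$ is an algebraic space (a scheme, even), the map to $\Quot{X}{F}$ is representable, étale and surjective, hence $\Quot{X}{F}$ is the quotient of $\Omega^F_{Y\to X}$ by an étale equivalence relation, which is an algebraic space by definition; separatedness follows from Lemma \ref{closed diagonal} (applied to deduce that the diagonal of $\Coh{Y/S}^n$ relative to $\Coh{X/S}^n$, and hence the diagonal of $\Quot{X}{F}$, is a closed immersion), together with the separatedness of $X\ra S$ and the properness of the supports. The main obstacle I anticipate is the surjectivity statement: one must argue carefully that an arbitrary flat finite family of rank $n$ on $X_T$, pulled back to $Y_T$, becomes, after a suitable étale localization on $T$, a quotient whose support maps isomorphically to its image in $X_T$, i.e. that one lands in the smaller functor $\Omega^F_{Y\to X}$ (with the isomorphic-support condition) and not merely in $\omega^F_{Y\to X}$. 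This requires knowing that the étale cover $Y\ra X$, restricted to the finite $T$-scheme $Z=\operatorname{Supp}(\calE)$, admits enough sections étale-locally on $T$ — a standard fact about finite étale morphisms, but the one place where the geometry, rather than the formal Fitting-ideal bookkeeping, does real work.
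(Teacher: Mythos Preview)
Your overall strategy matches the paper's, but there are two genuine gaps.

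First, you skip the reduction to quasi-compact $X$. You take an \'etale cover $Y=\coprod_i\Spec(R_i)\to X$ and then assert that $f\colon Y\to X$ is affine ``being a map from an affine scheme to a separated algebraic space''. But an infinite disjoint union of affines is not affine, so this argument fails unless the cover is finite, i.e.\ unless $X$ is quasi-compact. The paper handles this by first observing that any $T$-valued point of $\Quot{X}{F}$ has support contained in some quasi-compact open $U\subseteq X$, whence
\[
\Quot{X}{F}=\lim_{\substack{U\subseteq X\\ \text{open, q-compact}}}\Quot{U}{F},
\]
and then reducing to the quasi-compact case. Only after this can one choose a single affine $Y$ with an affine, \'etale, surjective $f\colon Y\to X$, which is what Lemma~\ref{open} requires.

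Second, your argument that $\Omega^F_{Y\to X}$ is a scheme is circular in one branch and off-target in the other. Writing $\Omega^F_{Y\to X}$ as the fibre product $\Quot{X}{F}\times_{\Coh{X}}\stackU_{Y\to X}$ is correct, but schematical representability of $\stackU_{Y\to X}\to\Coh{X}$ only yields a scheme after pulling back along a \emph{scheme} mapping to $\Coh{X}$, and $\Quot{X}{F}$ is precisely what you are trying to show is representable. The module-restriction alternative is also misplaced: Theorem~\ref{Weilmodule is rep} describes the fibres of $\Coh{Y}\to\Coh{X}$, not the Quot functor. The paper's route is the one you should take: by Lemma~\ref{open}(2), $\Omega^F_{Y\to X}$ is an open subfunctor of $\Quot{Y}{F}$, and since $Y\to S$ is a morphism of affine schemes, $\Quot{Y}{F}$ is already known to be a scheme (the paper cites \cite{affinequot}); hence $\Omega^F_{Y\to X}$ is a scheme.

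Your surjectivity sketch and the use of Lemma~\ref{closed diagonal} for separatedness are in line with the paper; the paper carries out surjectivity explicitly at field-valued points, which is what your ``after an \'etale base change on $T$'' is gesturing at.
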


\begin{proof} As the Quot functor commutes with base change, we may reduce to the case with the base $S$ being an
  affine scheme $S=\Spec (A)$. Moreover, any $T$-valued point $q
  \colon F_{X_T} \ra \calE$ of $\Quot{F}{X}$ is such that the support $\operatorname{Supp}(\calE) $
  is finite over the base. Hence  the support of the quotient $\calE$
  is contained in an open quasi-compact $U\subseteq
  X$. Therefore we have that
$$ \lim_{\substack{U\subseteq X \\ \text{open, q-compact}}}\Quot{U}{F} =\Quot{X}{F}.$$
Hence it suffices to show the theorem for $X\ra
  S$ being quasi-compact. With $X\ra S$ quasi-compact we can find an
  affine scheme $Y\ra S$ with an \'etale, affine and surjective map $f\colon Y\ra
  X$. With affine schemes $Y\ra S$  we have that  $\Quot{Y}{F}$ is represented by a
  scheme (\cite{affinequot}). By Lemma \ref{open} (2) we get that $\Omega_{Y\to
    X}^F$ is open in $\Quot{Y}{F}$, hence a
  scheme. Lemma \ref{open} (3) gives that the induced map
\begin{equation}
\label{cover} \xymatrix{\Omega_{Y\to X}^{F} \ar[r] & \Quot{X}{F}}
\end{equation}
is representable and  \'etale. 

We then want to see that the map (\ref{cover}) is surjective. Let $k$ be a field
and $F_{X_k}\ra \calE$ be a $\Spec(k)$-valued point of $\Quot{X}{F}$
where we use the notation $X_k=X\times_S\Spec(k)$.  We want to show
that there exists a separable field extension $k\ra L$ such that the
corresponding $\Spec(L)$-valued point lifts to $\Omega^{F}_{Y\to X}$. 

The reduced support
$Z=|\operatorname{Supp(\calE)}|$ is a  disjoint union  of a
finite set of points, given by finite field extensions $k \ra
k_i$ with $i=1, \ldots, m$. Then $f^{-1}(\Spec(k_i))$ is also a finite
union of points $\sqcup_{j_i=1}^{m_i}\Spec(L_{j_i})$, with $k_i \ra L_{j_i}$ a finite separable field
extension for $j_i=1, \ldots, m_i$. There exists a finite separable field extension $k \ra
L$ such that the induced map $k_i\bigotimes_kL \ra L_{j_i} \bigotimes_kL$
splits, for all $i=1, \ldots, m$ and all $j_i=1, \ldots, m_i$. Then
$$\xymatrix{f^{-1}(Z) \times_{\Spec(k)}\Spec(L) \ar[r] & Z\times_{\Spec (k)}\Spec
(L) }$$
has a section, and we have that the corresponding $\Spec(L)$-valued
point of
$\Quot{X}{F}$ lifts to $\Omega^{F}_{Y\to X}$. We then have proven
surjectivity, and consequently, accordingly to definition in
\cite{RG},  that $\Quot{X}{F}$ is an algebraic space. That the
algebraic space representing $\Quot{X}{F}$ is separated follows from
Lemma \ref{closed diagonal} and the Cartesian diagrams \ref{open}.
\end{proof}

\begin{rem} With $F_X={\calO}_X$ the structure sheaf on $X$, the Quot
  functor $\Quot{X}{F}$ is the Hilbert functor
  $\underline{\operatorname{Hilb}}^n_{X/S}$. The situation with the Hilbert scheme
  was considered in \cite{ES}, and a similar approach for Hilbert
  stacks was done in \cite{RydhHilb}.  Note that when $F_X={\calO}_X$ then we
  get by Lemma \ref{module restriction rings} that $\omega^F_{Y\to X}=\Omega^F_{Y\to X}$.
\end{rem}

\begin{rem} The separated assumption of $X\ra S$ is a necessary
condition for representability \cite{LS}.  On the other hand there exist examples of separated
schemes $X\ra S$ for which the Quot functor is not represented by a
scheme \cite{algsp}, but only an algebraic space. Thus when
considering representability, the setting with separated algebraic spaces
$X\ra S$ is the natural one.
\end{rem}

\begin{rem} The above result in its generality  is not covered by the
  result of Artin \cite{Artinformalmoduli}; we have no restriction on
  the base space $S$, and we do not assume that $X\ra S$ is of locally
  finite type.
\end{rem}
\bibliographystyle{dary}
\bibliography{paper}
\end{document}